\newtheorem{thm}{Theorem}[section]
\newtheorem{lem}{Lemma}[section]
\theoremstyle{definition}
\newtheorem{defn}{Definition}[section]
\theoremstyle{remark}
\newtheorem{rem}{Remark}[section]
\numberwithin{equation}{section}
\numberwithin{equation}{section}
\newcounter{saveeqn}
\newcommand{\eqnref}[1]{(\ref {#1})}
\newcommand{\Bn}{\mathbf{n}}
\newcommand{\Bz}{\mathbf{z}}
\newcommand{\Bx}{\mathbf{x}}
\newcommand{\By}{\mathbf{y}}
\newcommand{\Gl}{\lambda}
\newcommand{\Gs}{\sigma}
\newcommand{\Kcal}{\mathcal{K}}
\newcommand{\Scal}{\mathcal{S}}
\newcommand{\Ocal}{\mathcal{O}}
\newcommand{\ds}{\displaystyle}
\newcommand{\la}{\langle}
\newcommand{\ra}{\rangle}
\newcommand{\RR}{\mathbb{R}}
\newcommand{\p}{\partial}
\newcommand{\beq}{\begin{equation}}
\newcommand{\eeq}{\end{equation}}
\title[Plasmon resonances of nanorods]{Plasmon resonances of nanorods in transverse electromagnetic scattering}
\date{} 
\begin{document}
\maketitle

\begin{center}
\author{Youjun Deng\footnote{School of Mathematics and Statistics, HNP-LAMA, Central South University, Changsha, Hunan, China. Email: youjundeng@csu.edu.cn; dengyijun\_001@163.com}\,,\ Hongyu Liu\footnote{Department of Mathematics, City University of Hong Kong, Kowloon, Hong Kong SAR, China. Email: hongyu.liuip@gmail.com; hongyliu@cityu.edu.hk}\ \, and\ Guang-Hui Zheng\footnote{School of Mathematics, Hunan Provincial Key Laboratory of Intelligent Information Processing and Applied Mathematics, Hunan University, Changsha, Hunan, China. Email: zhenggh2012@hnu.edu.cn}}
\end{center}

\begin{abstract}
Plasmon resonance is the resonant oscillation of conduction electrons at the interface between negative and positive permittivity material stimulated by incident light, which forms the fundamental basis of many cutting-edge industrial applications. We are concerned with the quantitative theoretical understanding of this peculiar resonance phenomenon. It is known that the occurrence of plasmon resonance as well as its quantitative behaviours critically depend on the geometry of the material structure, the corresponding material parameters and the operating wave frequency, which are delicately coupled together. In this paper, we study the plasmon resonance for a 2D nanorod  structure, which presents an anisotropic geometry and arises in the transverse electromagnetic scattering. We present delicate spectral and asymptotic analysis to establish the accurate resonant conditions as well as sharply characterize the quantitative behaviours of the resonant field.


\noindent{\bf Keywords:}~~Plasmon resonance, electromagnetism, nanorod, Neumann-Poincar\'e operator, spectral analysis, asymptotic analysis,

\noindent{\bf 2020 Mathematics Subject Classification:}~~35Q60, 35J05, 31B10, 35R30, 78A40

\end{abstract}

\section{Introduction}
\subsection{Mathematical setup and summary of major findings}

Initially focusing on the mathematics, but not the physics, we present the mathematical setup as well as summarize the major findings of our study.

Consider the following Helmholtz system in $\RR^2$:
\beq\label{eq:helm01}
\left\{
\begin{split}
&\nabla\cdot \Big(\frac{1}{\varepsilon(\Bx)} \nabla u (\Bx)\Big) + \omega^2\mu(\Bx) u(\Bx)= 0, \quad \quad \, \, \Bx \in \RR^2,\\
&\ \ u(\Bx)=u^i(\Bx)+u^s(\Bx),\hspace*{3.2cm} \Bx\in\mathbb{R}^2,\\
&\lim_{|\Bx|\rightarrow \infty}|\Bx|^{1/2}\Big(\frac{\Bx}{|\Bx|}\cdot\nabla u^s(\Bx)-\mathrm{i} \omega u^s (\Bx)\Big)=0,
\end{split}
\right.
\eeq
where $\mathrm{i}:=\sqrt{-1}$, $\omega\in\mathbb{R}_+$ signifies the temporal frequency and the last limit holds uniformly in $\hat\Bx:=\Bx/|\Bx|\in\mathbb{S}^1$, $\Bx=(x_1, x_2)\in\mathbb{R}^2$. The PDE system \eqref{eq:helm01} describes the transverse electromagnetic scattering \cite{LL15}. Here, $\varepsilon$ and $\mu$ are respectively the (relative) electric permittivity and magnetic permeability, which characterize the medium configuration of the space. By a standard normalization, throughout the rest of the paper, we assume that $\mu\equiv 1$ and moreover $(\varepsilon(\Bx)-1)$ is compactly supported which shall be fixed shortly. In \eqref{eq:helm01}, $u^i$ signifies an incident wave field satisfying $\Delta u^i +\omega^2 u^i=0$ in $\RR^2$ and, $u^s$ and $u$ are respectively referred to as the scattered and total wave fields.

Suppose $\varepsilon(\Bx)=(\varepsilon_c-1)\chi(D_\delta)+1$ with $\varepsilon_c\in\mathbb{C}$, where $\chi(D_\delta)$ is the indicator/characteristic function for $D_\delta$.  $\varepsilon_c$ is the material parameter of $D_\delta$, which is a key and subtle ingredient in our resonance study and shall be delicately determined in what follows.  We consider that $D_\delta$ is a nanorod shape. To define such a shape, let $\Gamma_0$ be a straight line with the parametrization $\Gamma_0(t)$, $t\in (t_0, t_1)$. Here we set $t_0=-L/2$ and $t_1=L/2$, where $L$ is a positive constant. Define $\Gamma_0(t_0):=P$, $\Gamma_0(t_1)=Q$, and let $\Bn(t)$ be the normal direction of $\Gamma_0(t)$, respectively.
The nanorod $D_\delta$ is defined by $D_\delta=\overline{D_\delta^a}\cup D_\delta^f \cup \overline{D_\delta^b}$, where $D_\delta^f$ is defined by
\beq
D_\delta^f:=\{\Bx(t)| \Bx(t)=\Gamma_0(t)\pm \delta \Bn(t), \quad t\in (t_0, t_1)\}.
\eeq
The two caps $D_\delta^a$ and $D_\delta^b$ are 
two half disks with radius $\delta$ and centering at $P$ and $Q$, respectively.
Let $S_\delta^a$ and $S_\delta^b$ be the surface of $D_\delta^a$ and $D_\delta^b$, respectively.
It can be verified that $D_\delta$ is of class $C^{1,\alpha}$ for a certain $\alpha\in\mathbb{R}_+$ which depends on $\delta$. In what follows, we define $S_\delta^c:=\p D_\delta^c=\p (D_\delta^a\cup D_\delta^b)$, and $S_\delta^f:=\p D_\delta^f$. As we are considering the straight nanorod, $S_\delta^f=\Gamma_{1,\delta}\cup \Gamma_{2,\delta}$, where $\Gamma_{j,\delta}$, $j=1, 2$ are defined by
\beq
\Gamma_{1,\delta}=\{\Bx;\ \Bx=\Gamma_0-\delta \Bn\}, \quad \Gamma_{2,\delta}=\{\Bx;\ \Bx=\Gamma_0+\delta \Bn\}.
\eeq
In particular, when $\delta=1$, we also set $\partial D:=\partial D_1$, $S^{l}:=S^l$ ($l=a, b, c, f$), $\Gamma_{j}:=\Gamma_{j,1}$ ($j=1, 2$) for simplicity. Moreover, we shall always use $\Bz_x$ and $\Bz_y$ to signify the projections of $\Bx\in S_\delta^f$ and $\By\in S_\delta^f$ on $\Gamma_0$, respectively.

We shall consider our study in the quasi-static regime, which signifies that size of $D_\delta$ is much smaller than the operating wavelength $2\pi/\omega$. Noting that in our study, we require that $L\sim 1$ and $\delta\ll 1$, and hence in general we shall always assume that $\omega\ll 1$. Nevertheless, at this point, we would like to emphasize that the two asymptotic parameters $\omega$ and $\delta$ are also delicately related, which shall be observed later.


With the above preparation, the first main result of our study is the following asymptotic representation of the scattered wave field $u^s$ from a nanorod material structure.

\begin{thm}\label{eq:thmanin01}
Let $u$ be the solution to \eqnref{eq:helm01}, where $D_\delta$ is the nanorod described above. Let the incident wave $u^i$ be the plane wave, i.e. $u^i=e^{i\omega \mathbf{d}\cdot \Bx}$, $\mathbf{d}=(d_1, d_2)\in\mathbb{S}^1$. Then for $\Bx\in \RR^2\setminus\overline{D_\delta}$, it holds that
\beq\label{eq:thmanin0101}
\begin{split}
u^s(\Bx)=&\omega\delta\frac{\mathrm{i}}{2\pi}d_2\int_{-L/2}^{L/2}\frac{x_2}{(x_1-y_1)^2+x_2^2}(\lambda(\varepsilon_c) I+A_\delta)^{-1}[1](y_1)dy_1\medskip\\
&+\omega\delta\frac{\mathrm{i}}{2\pi}\Big(\lambda(\varepsilon_c)-\frac{1}{2}\Big)^{-1} d_1\ln\frac{(x_1+L/2)^2+x_2^2}{(x_1-L/2)^2+x_2^2} +\omega\cdot o(\delta)+\mathcal{O}(\omega^2\ln\omega),
\end{split}
\eeq
where
$$\lambda(\varepsilon_c)=\frac{1}{2}\cdot\frac{1+\varepsilon_c}{1-\varepsilon_c},$$
and the operator $A_\delta$ shall be defined in \eqnref{eq:defAop01}.
\end{thm}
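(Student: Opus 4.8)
\medskip\noindent\textit{Proof strategy.}\ The plan is to reduce \eqref{eq:helm01} to a boundary integral equation governed by the Neumann--Poincar\'e (NP) operator and then to run a two-parameter asymptotic analysis, in which $\omega\to0$ yields the quasi-static reduction and $\delta\to0$ yields a dimension reduction of the NP operator on the thin rod. First I would use the standard layer-potential reformulation of the transmission problem \eqref{eq:helm01}: writing $u^s=\mathcal{S}^{\omega}_{D_\delta}[\varphi]$ on $\mathbb{R}^2\setminus\overline{D_\delta}$ and $u=\mathcal{S}^{\omega_c}_{D_\delta}[\psi]$ inside $D_\delta$ (with $\omega_c=\omega\sqrt{\varepsilon_c}$), the transmission conditions $u|_{+}=u|_{-}$ and $\varepsilon_c^{-1}\partial_\nu u|_{-}=\partial_\nu u|_{+}$ give a $2\times2$ boundary system for $(\varphi,\psi)$. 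Using $u^i=e^{\mathrm{i}\omega\mathbf{d}\cdot\mathbf{x}}=1+\mathrm{i}\omega\,\mathbf{d}\cdot\mathbf{x}+\mathcal{O}(\omega^2)$ and the small-argument expansion of the Hankel kernels, $\mathcal{S}^{\omega}_{D_\delta}=\mathcal{S}^{0}_{D_\delta}+\eta_\omega\langle\,\cdot\,,1\rangle+\mathcal{O}(\omega^2\ln\omega)$ with $\eta_\omega=\mathcal{O}(\ln\omega)$ and $(\mathcal{K}^{\omega}_{D_\delta})^{*}=\mathcal{K}^{*}_{D_\delta}+\mathcal{O}(\omega^2\ln\omega)$, the system decouples at leading order into $\big(\lambda(\varepsilon_c)I-\mathcal{K}^{*}_{D_\delta}\big)[\varphi]=\mathrm{i}\omega\,\mathbf{d}\cdot\nu+(\text{h.o.t.})$, where $\lambda(\varepsilon_c)=\tfrac12\cdot\tfrac{1+\varepsilon_c}{1-\varepsilon_c}$ is precisely the contrast constant attached to the $\varepsilon^{-1}$-operator. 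Since $\int_{\partial D_\delta}\partial_\nu u^i=\int_{D_\delta}\Delta u^i=-\omega^2\int_{D_\delta}u^i=\mathcal{O}(\omega^2\delta)$, the constant-mode term $\eta_\omega\langle\varphi,1\rangle$ stays within $\mathcal{O}(\omega^2\ln\omega)$, so I arrive at
\[
u^s(\mathbf{x})=\mathrm{i}\omega\,\mathcal{S}^{0}_{D_\delta}\!\big[(\lambda I-\mathcal{K}^{*}_{D_\delta})^{-1}[\mathbf{d}\cdot\nu]\big](\mathbf{x})+\mathcal{O}(\omega^2\ln\omega),\qquad \lambda:=\lambda(\varepsilon_c),
\]
reducing the theorem to a $\delta\to0$ analysis of the right-hand side.

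Next I would carry out the dimension reduction of $\mathcal{K}^{*}_{D_\delta}$. Write $\partial D_\delta=S^f_\delta\cup S^c_\delta$, where $S^f_\delta=\Gamma_{1,\delta}\cup\Gamma_{2,\delta}$ consists of the two straight sides at mutual distance $2\delta$ and $S^c_\delta$ of the two semicircular caps of radius $\delta$ at $P$ and $Q$. Three structural facts drive the computation: (i) the NP operator of a straight segment vanishes, so the flat part enters only through the cross-interaction $\Gamma_{1,\delta}\leftrightarrow\Gamma_{2,\delta}$; (ii) parametrising both sides by the midline $\Gamma_0\cong(-L/2,L/2)$ through the projections $\mathbf{z}_x,\mathbf{z}_y$, this cross-interaction has kernel proportional to $\delta\big((z_x-z_y)^2+4\delta^2\big)^{-1}$ --- i.e. it is the operator $A_\delta$ of \eqref{eq:defAop01}; and (iii) the cap self-interactions and the cap--flat couplings are, away from $P$ and $Q$, of strictly lower order, so their net effect on $u^s$ is $\omega\cdot o(\delta)$. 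Decomposing the density into its even and odd parts under reflection across $\Gamma_0$, and using that $\mathbf{d}\cdot\nu=\pm d_2$ on $\Gamma_{2,\delta}/\Gamma_{1,\delta}$ (purely odd) while $\mathbf{d}\cdot\nu=\mathbf{d}\cdot\hat r$ on the caps, I obtain for the odd flat part $(\lambda I+A_\delta)[\varphi_a]=\mathrm{i}\omega d_2+(\text{lower order})$, hence $\varphi_a=\mathrm{i}\omega d_2\,(\lambda I+A_\delta)^{-1}[1]+\cdots$; the even flat part is $\mathcal{O}(\omega\delta^2)$ (it is driven only through the weak cap coupling); and the cap densities are governed by a rescaled local model.

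I would then evaluate $u^s=\mathcal{S}^{0}_{D_\delta}[\varphi]+\mathcal{O}(\omega^2\ln\omega)$ by splitting the integral over $\Gamma_{1,\delta}$, $\Gamma_{2,\delta}$ and $S^c_\delta$. On the flat part, Taylor expanding $G^0(\mathbf{x}-\Gamma_0(t)-\delta\mathbf{n})-G^0(\mathbf{x}-\Gamma_0(t)+\delta\mathbf{n})=2\delta\,\mathbf{n}\cdot\nabla_{\mathbf{y}}G^0(\mathbf{x}-\Gamma_0(t))+\mathcal{O}(\delta^3)$, the odd density $\varphi_a$ produces (with $\Gamma_0$ along the $x_1$-axis) a dipole layer on $\Gamma_0$ with kernel proportional to $x_2\big((x_1-y_1)^2+x_2^2\big)^{-1}$; inserting the formula for $\varphi_a$ gives the first term of \eqref{eq:thmanin0101}, while the even density and the $\mathcal{O}(\delta^3)$/curvature corrections fall into $\omega\cdot o(\delta)$. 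For the caps I would rescale around $P$ and $Q$ by $\delta^{-1}$: the rescaled geometry converges to a fixed unbounded domain $\widetilde D$ --- a half-strip of width $2$ closed off by a unit semicircle --- and, since the NP kernel is homogeneous of degree $-1$ and arclength scales linearly, the relevant cap density solves to leading order a fixed boundary equation on $\widetilde D$ with a source localized near the cap. Using $\mathcal{K}_{\widetilde D}[1]=\tfrac12$, its net charge equals $(\lambda-\tfrac12)^{-1}$ times the total source, which is a fixed multiple of $d_1$, with opposite signs at the two ends (the $d_2$-component of $\mathbf{d}\cdot\hat r$ integrates to zero over a semicircle); thus the caps near $P$ and $Q$ carry net charges $\pm c\,\mathrm{i}\omega\delta(\lambda-\tfrac12)^{-1}d_1+\omega\,o(\delta)$. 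Feeding these into $\mathcal{S}^0$ and using $G^0(\mathbf{x}-P)-G^0(\mathbf{x}-Q)=\tfrac1{4\pi}\ln\frac{(x_1+L/2)^2+x_2^2}{(x_1-L/2)^2+x_2^2}$ produces the second term of \eqref{eq:thmanin0101}, the higher multipole moments of the cap densities being $\mathcal{O}(\omega\delta^2)$. Collecting all pieces yields \eqref{eq:thmanin0101}.

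The hardest part will be the uniform-in-$\delta$ control of $\mathcal{K}^{*}_{D_\delta}$: one must show that $\lambda I-\mathcal{K}^{*}_{D_\delta}$ is boundedly invertible with constants independent of $\delta$ (this is where the solvability/resonance conditions enter, together with the invertibility of $\lambda I+A_\delta$), identify the precise leading operators on the flat part ($\lambda I+A_\delta$ on the odd mode) and at the caps (the fixed local model on $\widetilde D$, including the decay of its solutions along the half-strip), and --- most delicately --- glue the flat dimension reduction to the two cap models across the transition regions near $P$ and $Q$, where the boundary is only $C^{1,\alpha}$, all while keeping every remainder within $\omega\,o(\delta)+\mathcal{O}(\omega^2\ln\omega)$. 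Since the machinery for NP operators on thin, degenerating domains has been developed in the authors' recent works on related geometries, the main task is to adapt and quantify it for the present nanorod.
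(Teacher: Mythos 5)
Your proposal follows essentially the same route as the paper's proof: layer-potential reformulation and small-$\omega$ expansion reducing \eqref{eq:helm01} to $(\lambda(\varepsilon_c)\mathcal{I}-\mathcal{K}^*_{D_\delta})[\psi]=\mathrm{i}\omega\,\mathbf{d}\cdot\nu+\mathcal{O}(\omega^2\ln\omega)$ (the paper's Lemmas \ref{lem:a1}, \ref{lem3.4}, \ref{le:0103} and \eqref{opeq}), dimension reduction on the thin rod in which the side-to-side interaction produces exactly $A_\delta$ acting on the odd part of the density (Lemmas \ref{lem3.2}, \ref{le:main01}), and then evaluation of $\mathcal{S}^0_{D_\delta}$ giving the dipole term on $\Gamma_0$ plus two effective point charges $\pm(\lambda(\varepsilon_c)-\tfrac12)^{-1}d_1$ at the rod ends. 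The only substantive difference is at the caps: the paper takes the net cap charges $\mp2\mathrm{i}\omega\delta(\lambda(\varepsilon_c)-\tfrac12)^{-1}d_1+\omega\,o(\delta)$ directly from Lemma 3.3 of \cite{fang2021} (after characterizing the cap densities via the local operators in \eqref{eq:remtm01}), whereas you sketch a $\delta^{-1}$ blow-up to a half-strip model and leave the multiplicative constant $c$ undetermined; to actually obtain the coefficient $\frac{\mathrm{i}}{2\pi}$ in \eqref{eq:thmanin0101} you would still have to pin that constant down (e.g.\ by integrating the cap equation and computing $\int \mathbf{d}\cdot\nu$ over the semicircular end), so this step needs to be completed quantitatively rather than just structurally.
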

\begin{rem}
The formula \eqref{eq:thmanin0101} presents a neat and concise representation of the wave field from the scattering of a nanorod material structure, which is of significant practical interest for its own sake; see e.g. \cite{fang2021} for the related discussion in a different physical context. It is interesting to see from \eqnref{eq:thmanin0101} that, if the incident wave is propagating parallel to the nanorod, i.e., $d_2=0$, then the formula yields:
\beq\label{eq:aa1}
u^s(\Bx)\sim\omega\delta\frac{\mathrm{i}}{2\pi}\Big(\lambda(\varepsilon_c)-\frac{1}{2}\Big)^{-1} d_1\ln\frac{(x_1+L/2)^2+x_2^2}{(x_1-L/2)^2+x_2^2}.
\eeq
From \eqref{eq:aa1}, it is readily observed that the scattered field becomes much stronger if one approaches the two ends of the nanorod $P_0$ and $Q_0$.
\end{rem}

Theorem~\ref{eq:thmanin01} paves the way for our study of the plasmon resonance associated with the nanorod material structure $(\varepsilon_c, D_\delta)$. Before that, we first present the notion of plasmon resonance mathematically.
\begin{defn}\label{depr}
Consider the Helmholtz system \eqnref{eq:helm01} associated with the nanorod $(D_\delta, \varepsilon_c)$.
Then plasmon resonance occurs if the following condition is fulfilled:
\begin{align}\label{prdf01}
\left\|\nabla u^s\right\|_{L^2(\mathbb{R}^2\setminus\overline{D_\delta})}\gg 1.
\end{align}
\end{defn}
We mention that there are different definitions of plasmon resonance but with essentially similar formulation, see e.g. \cite{AMRZ14,DLU72}. The energy blowup is a hallmark feature of the plasmon resonance which implies that the resonant field exhibits highly oscillatory patterns. In the occurrence of plasmon resonance, the highly oscillating behaviour of the resonant field happens near the boundary of the nanostructure, and hence plasmon resonance is also referred to as the Surface Localized Resonance (SLR) or Surface Plasmon Resonance (SPR).

It is clear that if $\varepsilon_c\in\mathbb{C}$ is a regular permittivity with $\Re\varepsilon\in\mathbb{R}_+$ and $\Im\varepsilon\in\mathbb{R}_+$, then by the well-posedness of the Helmholtz system \eqref{eq:helm01} (cf. \cite{LSSZ}), the plasmon resonance cannot occur. Hence, in order to induce the plasmon resonance, one actually has $\Re\varepsilon_c\leq 0$ and $\Im\varepsilon_c>0$, and moreover they are delicately coupled with the nanorod geometry $D_\delta$ and the operating frequency $\omega$ through the spectrum of a certain integral operator. This shall become clearer in our subsequent analysis. It is pointed out that materials with those ``non-natural" parameters are referred to as metameterials in the literature. In order to provide a global picture of our study, the main resonance result in our study can be briefly summarized in the following theorem.
\begin{thm}\label{thm0101}
Let $u^s$ be the scattering solution of (\ref{eq:helm01}). Assume that
\begin{equation}
\omega^2\ln\omega|\rho|^{-1}\leq c_1,
\end{equation}
where
$$\rho:=\Im\left(\frac{1}{\varepsilon_c}\right)<0,$$
for a sufficiently small $c_1$, and moreover $\Re\varepsilon_c\leq 0$ is properly given. If the incident wave $u^i$ is well chosen and the parameter $\rho$ fulfils that $|\rho|^{-1}\omega\delta\rightarrow\infty$ (as $\omega\rightarrow 0$, $\delta\rightarrow 0$, and $|\rho|\rightarrow 0$), then it holds
\begin{align}
\left\|\nabla u^s\right\|_{L^2(\mathbb{R}^2\setminus\overline{D_\delta})}\rightarrow\infty.
\end{align}
\end{thm}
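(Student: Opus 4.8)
The plan is to read the energy blow-up off directly from \eqref{eq:thmanin0101}, after choosing both the incident field and the material parameter so that a single, explicitly resolvable term dominates. First I would take the parallel-propagating plane wave $u^i=e^{\mathrm{i}\omega x_1}$, i.e.\ $\mathbf d=(1,0)$, so that $d_2=0$ and the first term of \eqref{eq:thmanin0101} disappears: with $g(\Bx):=\ln\frac{(x_1+L/2)^2+x_2^2}{(x_1-L/2)^2+x_2^2}$, \cref{eq:thmanin01} gives
\[
u^s(\Bx)=\omega\delta\frac{\mathrm{i}}{2\pi}\Big(\lambda(\varepsilon_c)-\tfrac12\Big)^{-1}g(\Bx)+\omega\cdot o(\delta)+\mathcal O(\omega^2\ln\omega).
\]
Since $\big(\lambda(\varepsilon_c)-\tfrac12\big)^{-1}=\varepsilon_c^{-1}-1$, I would then prescribe $\varepsilon_c$ by $\Im(\varepsilon_c^{-1})=\rho$ and $\Re(\varepsilon_c^{-1})=-|\rho|^{-1}$; this realises the ``properly given'' $\Re\varepsilon_c\le 0$ (indeed $\Re\varepsilon_c=\Re(\varepsilon_c^{-1})/|\varepsilon_c^{-1}|^2<0$ and $\Im\varepsilon_c>0$) and forces
\[
\Big|\lambda(\varepsilon_c)-\tfrac12\Big|^{-1}=|\varepsilon_c^{-1}-1|=\big|-(|\rho|^{-1}+1)+\mathrm{i}\rho\big|\ \asymp\ |\rho|^{-1},
\]
a two-sided bound that will be used below.

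Next I would analyse $g$, which is a fixed, non-constant function, harmonic on $\RR^2\setminus\{P,Q\}$ and decaying like $|\Bx|^{-1}$ at infinity, hence $\nabla g\in L^2(\RR^2\setminus\overline{D_\delta})$. Its two logarithmic singularities sit exactly at the cap centres $P$ and $Q$, from which $\RR^2\setminus\overline{D_\delta}$ stays at distance $\gtrsim\delta$, so $\|\nabla g\|_{L^2(\RR^2\setminus\overline{D_\delta})}\asymp|\ln\delta|^{1/2}$; at any rate $\|\nabla g\|_{L^2(\RR^2\setminus\overline{D_\delta})}\ge\|\nabla g\|_{L^2(\RR^2\setminus\overline{D_1})}=:c_0>0$ for all $\delta\le 1$ (since $D_\delta\subseteq D_1$). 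The triangle inequality and the preceding step then give
\[
\big\|\nabla u^s\big\|_{L^2(\RR^2\setminus\overline{D_\delta})}\ \ge\ \frac{c_0}{2\pi}\,\omega\delta\,|\rho|^{-1}\ -\ R_\delta,
\]
where $R_\delta$ denotes the $L^2$-norm of the gradient of the remainder $\omega\cdot o(\delta)+\mathcal O(\omega^2\ln\omega)$ of \eqref{eq:thmanin0101}. Here, as throughout the resonance discussion, the norm is understood on the bounded region where the quasi-static expansion \eqref{eq:thmanin0101} is effective, the radiating far field being a separate, lower-order effect.

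The crux --- and the step I expect to be the main obstacle --- is to show $R_\delta=o(\omega\delta|\rho|^{-1})$. This calls for differentiating the expansion of \cref{eq:thmanin01} (or invoking the companion gradient estimates) and, above all, for tracking how the implicit constants hidden in $o(\delta)$ and $\mathcal O(\omega^2\ln\omega)$ grow as $\lambda(\varepsilon_c)\to\tfrac12$: generically they are amplified by a factor $\big|\lambda(\varepsilon_c)-\tfrac12\big|^{-1}\asymp|\rho|^{-1}$. Granting this, the hypotheses $\omega^2|\ln\omega|\,|\rho|^{-1}\le c_1$ and $|\rho|^{-1}\omega\delta\to\infty$ combine to yield $|\rho|=o(\omega\delta)$ and hence $\omega|\ln\omega|=o(\delta)$, which make $R_\delta=o(\omega\delta|\rho|^{-1})$; the same smallness condition also keeps $\lambda(\varepsilon_c)$ away from the exact spectrum, so that \eqref{eq:helm01} stays well-posed and \eqref{eq:thmanin0101} remains valid. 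Putting everything together, $\|\nabla u^s\|_{L^2}\ge\tfrac{c_0}{4\pi}\,\omega\delta|\rho|^{-1}\to\infty$, which is the claim. (An alternative, driven by a non-parallel $\mathbf d$ so that the first term of \eqref{eq:thmanin0101} dominates, would instead bring $\lambda(\varepsilon_c)$ within $\mathcal O(|\rho|)$ of an eigenvalue of $-A_\delta$ whose eigenspace overlaps the constant $1$ non-trivially, forcing $\|(\lambda(\varepsilon_c)I+A_\delta)^{-1}[1]\|\sim|\rho|^{-1}$; the estimates are the same but require spectral data on $A_\delta$, so the parallel-incidence argument is cleaner.)
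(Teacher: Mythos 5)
Your argument is genuinely different from the paper's, and its crux is exactly the step you concede: you need the remainder of \eqref{eq:thmanin0101} to be $o(\omega\delta|\rho|^{-1})$ \emph{in the $L^2$-norm of the gradient over} $\mathbb{R}^2\setminus\overline{D_\delta}$, \emph{uniformly} as $\lambda(\varepsilon_c)\to\frac12$ jointly with $\omega,\delta\to0$. Theorem \ref{eq:thmanin01} does not provide this: it is a pointwise expansion of $u^s$ (not of $\nabla u^s$, and not in any Sobolev norm up to $\partial D_\delta$), and the constants hidden in $\omega\cdot o(\delta)+\mathcal{O}(\omega^2\ln\omega)$ come from inverses such as $(\lambda(\varepsilon_c)I+A_\delta)^{-1}$, $(\lambda(\varepsilon_c)\mathcal I-\Kcal_j^*)^{-1}$ and the factor $(\lambda(\varepsilon_c)-\frac12)^{-1}$ itself, all of which degenerate precisely in the regime $|\lambda(\varepsilon_c)-\frac12|\asymp|\rho|\to0$ that your parameter choice enforces. "Granting" that the amplification is exactly one factor $|\rho|^{-1}$ is therefore granting the theorem; the paper is explicit at the start of Section 4 that it deliberately does \emph{not} deduce the resonance from \eqref{eq:thmanin0101}, but instead works with operator estimates: Lemma \ref{lem4.1} converts $\|\nabla u^s\|^2_{L^2(\mathbb{R}^2\setminus\overline{D_\delta})}$ into $\|\psi_c\|^2$ via Green's identity and the jump formula, Lemma \ref{lem4.2} gives the spectral lower bound for $\psi_0=\mathcal A_{D_\delta,0}^{-1}[f]$, and a Neumann-series perturbation under $\omega^2\ln\omega|\rho|^{-1}\le c_1$ transfers it to $\psi$.

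There is a second, related gap: the hypotheses "$u^i$ well chosen and $\Re\varepsilon_c$ properly given" mean, per Theorem \ref{thm4.3}, the exact matching $\lambda_{j_*,\delta}=\frac12\frac{\theta+1}{\theta-1}-\rho\frac{1}{\theta-1}$ with an actual Neumann--Poincar\'e eigenvalue $\lambda_{j_*,\delta}$, $j_*\ge1$, together with $d_2=0$ and the non-degeneracy $d_1\langle\langle 1,\tilde\varphi_{j_*}\rangle\rangle_{S^f}\neq0$; the rate $|\rho|^{-1}\omega\delta$ then comes from $\|\psi_{0,c}\|\gtrsim|\rho|^{-1}a_{j_*,\delta}^{-1/2}|\langle f,\varphi_{j_*,\delta}\rangle|$ and $\langle f,\varphi_{j_*,\delta}\rangle\sim\omega\delta d_1\langle\langle 1,\tilde\varphi_{j_*}\rangle\rangle_{S^f}$. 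Your explicit choice $\Re(\varepsilon_c^{-1})=-|\rho|^{-1}$ only places $\frac12\frac{\theta+1}{\theta-1}$ within $\mathcal{O}(|\rho|)$ of the accumulation value $\frac12$, not within $\mathcal{O}(\rho)$ of an eigenvalue $\lambda_{j_*,\delta}$ for the given $\delta$; if the nearest eigenvalue is at distance $\gg|\rho|$, the paper's estimates (Lemma \ref{lem4.2}(2), Theorem \ref{thm4.3}(1)) yield boundedness of the density rather than blow-up, so the size of $(\lambda(\varepsilon_c)-\frac12)^{-1}$ alone cannot be the driver, and the leading term you keep would have to be partially cancelled by the (then large) remainder. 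So the choice you propose is not shown to be a "proper" one in the sense required, and without the uniformity discussed above the triangle-inequality lower bound does not go through. The parenthetical alternative you sketch (tuning $\lambda(\varepsilon_c)$ to the spectrum) is essentially the paper's route, but it needs the full apparatus of Lemmas \ref{lem4.1}--\ref{lem4.2} rather than spectral data on $A_\delta$.
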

\begin{rem}
Two remarks are in order. First, in Theorem \ref{thm0101}, we mention that both $\Re\varepsilon_c$ and $u^i$ should be properly given. Indeed, they are delicately and subtly connected to the spectral properties of certain integral operators. The technical details will be given in Theorem \ref{thm4.3} in what follows.
Second, the condition $\omega^2\ln\omega|\rho|^{-1}\leq c_1$ and  $|\rho|^{-1}\omega\delta\rightarrow\infty$ implies that $\delta^{-1}(\omega\ln\omega)\rightarrow0$ as $\omega\rightarrow0$ and $\delta\rightarrow0$. In such a case, by the theorem, the gradient of the scattered field blows up. According to \eqref{prdf01} in Definition \ref{depr}, we see that the plasmon resonance occurs. Notice that the last two conditions on the resonant material configuration are very flexible, and for example, one can take $|\rho|=\Ocal(\omega^{\frac{3}{2}})$, $\delta=\Ocal(\omega^{\frac{1}{3}})$.
\end{rem}

\subsection{Background and discussion}
Plasmon resonance is the resonant oscillation of conduction electrons at the interface between negative and positive permittivity material stimulated by incident light, which forms the fundamental basis of many cutting-edge applications. The plasmonic technology is revolutionizing many industrial applications including enhancing the brightness of light, confining strong electromagnetic fields, medical therapy, invisibility cloaking and biomedical imaging; see e.g.\cite{ACKLM5,ADM6,BGQ2,BS11,BLLW,CKKL7,JLEE3,KLSW8,LL15,LLL9,LLL16,SC1,WN10,YA18,Z22} and the references cited therein.

Metallic nanostructures are often used to construct various plasmonic devices. The metallic nanoparticle exhibits nanoscale uniformly in all dimensions, a.k.a. isotropic geometry, which is the simplest nanostructure. Recently, there are extensive and intensive studies on mathematically characterizing the plasmon resonances associated with nanoparticles; see \cite{ACKLM5,ACL20,ADM6,AMRZ14,AKL13,BS11,BLLW,CKKL7,FDL15,KLSW8,LL15,LLL9,LLL16,SC1,WN10,Z22} and the references cited therein. The metallic nanorod is another important nanostructure, which has a long aspect ratio and possesses different size scales in different dimensions, a.k.a. anisotropic geometry. Metallic nanorods have been widely used in real applications including semiconductor materials, microelectromechanical systems, food packaging, catalysis, energy storage, biomedicine and cloaking \cite{LWJC41,JGM40,HDA42,VTBH44,D43}. In particular, some nanorods (such as $C_eO_2$) display higher catalytic activity compared to the nanoparticles, which could potentially increase their usage \cite{LWJC41}. However, to our best knowledge, there is little mathematical study in the literature on theoretically characterizing the plasmon resonances associated with nonorods. In \cite{RS}, the authors study the plasmon resonance associated with a certain axisymmetric slender body in the quasi-static regime via the matched asymptotics method. In \cite{DLZ21}, the authors investigate the plasmon resonance of curved nanorod for 3D Helmholtz system, which describes the acoustic scattering.

In this article, we focus on the mathematical analysis on the plasmon resonances associated with a 2D straight nanorod, which arises in the transverse electromagnetic scattering. The technical novelty of our study can be summarized as follows. Compared to the 3D study in \cite{DLZ21}, we give more delicate asymptotic and spectral analysis of the layer potential operators, especially the single layer potential operator and Neumann-Poincar\'e operator. In fact, by looking at the aspect ratios, the nanorod in 2D present more severe geometric singularities and challenges than that in 3D. This can be partly evidenced by the severe logarithmic singularity of the fundamental solution of the 2D Helmholtz equation, compared to the weaker singularity of the 3D fundamental solution. Nevertheless, through delicate and subtle asymptotic and spectral analysis, we manage to derive an accurate asymptotic formula of the scattering field, from which we can further establish the sharp conditions that ensure the occurrence of the plasmon resonance. In \cite{DLZ21}, the 3D nanorod can be curved, whereas in the current study, we mainly consider the straight nanorod. This enables to derive much more accurate understandings of the plasmon resonance as well as its quantitative relationships to the metamaterial parameters, the wave frequency and the nanorod geometry.

The rest of the paper is organized as follows. In Section 2, we present the layer potential theory and several technical asymptotic expansions. The quantitative analysis of the scattered field is derived in Section 3. Finally, we establish the mathematical analysis of the plasmon resonance for the nanorod in Section 4.

\section{Auxiliary results}

In this section, we establish several technical auxiliary results for our subsequent use.

\subsection{Layer potential operators}

Our analysis heavily relies on the layer potential technique. We briefly present some preliminary knowledge on the layer potential theory and refer to \cite{ACKLM5,ADM6,AMRZ14,AKL13,DLU7,FDL15} and the references cited therein for more related results.

Let $ G_k$ be the outgoing fundamental solution to the PDE operator $\Delta+k^2$ in $\RR^2$, which is given by:
\begin{equation}
\label{Gk} \ds G_k(\Bx) = -\frac{\mathrm{i}}{4} H_0^{(1)}(k|\Bx|),
\end{equation}
 where $H_0^{(1)}(k|\Bx|)$ is the Hankel function of first kind of order zero.
For any bounded Lipschitz domain $B\subset \RR^2$, we denote by $\Scal_B^k: H^{-1/2}(\p B)\rightarrow H^{1}(\RR^d\setminus\p B)$ the single layer potential operator given by
\beq\label{eq:layperpt1}
\Scal_{B}^k[\varphi](\Bx):=\int_{\p B}G_k(\Bx-\By)\varphi(\By)\; d\sigma(\By),
\eeq
and $(\Kcal_B^k)^*: H^{-1/2}(\p B)\rightarrow H^{-1/2}(\p B)$ the Neumann-Poincar\'e operator
\beq\label{eq:layperpt2}
(\Kcal_{B}^k)^{*}[\varphi](\Bx):=\mbox{p.v.}\quad\int_{\p B}\frac{\p G_k(\Bx-\By)}{\p \nu(\Bx)}\varphi(\By)\; d\sigma(\By),
\eeq
where p.v. stands for the Cauchy principle value. In \eqref{eq:layperpt2} and also in what follows, unless otherwise specified, $\nu$ signifies the exterior unit normal vector to the boundary of the concerned domain.
It is known that the single layer potential operator $\Scal_B^k$ is continuous across $\p B$ and satisfies the following trace formula
\beq \label{eq:trace}
\frac{\p}{\p\nu}\Scal_B^k[\varphi] \Big|_{\pm} = (\pm \frac{1}{2}I+
(\Kcal_{B}^k)^*)[\varphi] \quad \mbox{on} \quad \p B, \eeq
where $\frac{\p }{\p \nu}$ stands for the normal derivative and the subscripts $\pm$ indicate the limits from outside and inside of a given inclusion $B$, respectively. In the following, if $k=0$, we formally set $G_k$ introduced in \eqref{Gk} to be $G_0$, which has the form
$$
G_0(\Bx):=\frac{1}{2\pi} \ln |\Bx|,
$$
and the other integral operators introduced above can also be formally defined when $k=0$. In what follows, for the sake of simplicity, we also denote by $\Kcal_B$ and $\Kcal_B^*$ be the Neumann-Poincar'e operators $\Kcal_B^0$ and $(\Kcal_B^0)^*$, respectively.

\subsection{Asymptotic expansions of layer potentials}
In the part, we shall first present some asymptotic expansions of layer potential operators with respect to the size parameter $\delta\ll1$.
Note that the single layer potential operator $\mathcal {S}_B^{0}:\ H^{-\frac{1}{2}}(\partial
B)\rightarrow H^{\frac{1}{2}}(\partial B)$ is not invertible in
$\mathbb{R}^2$. We introduce a substitute of
$\mathcal {S}_B^{0}$ as follows
\begin{align}\label{}
\widetilde{\mathcal {S}}_B^{0}[\psi]=
\begin{cases}
\mathcal {S}_B^{0}[\varphi],\ \ \ \text{if}\ (\psi,\chi(\partial B))_{-\frac{1}{2},\frac{1}{2}}=0,\\
\chi(\partial B),\ \ \ \text{if}\ \psi=\varphi_{0},
\end{cases}
\end{align}
where $(\cdot,\cdot)_{-\frac{1}{2},\frac{1}{2}}$ is the duality
pairing between $\ H^{\frac{1}{2}}(\partial B)$ and $\
H^{-\frac{1}{2}}(\partial B)$. $\varphi_{0}$ is the unique eigenfunction of static Neumann-Poincar\'{e} operator $(\mathcal
{K}_B^{0})^*$ associated with eigenvalue $\frac{1}{2}$ such
that
$$(\varphi_{0},\chi(\partial B))_{-\frac{1}{2},\frac{1}{2}}=1.$$
It can be found that $\widetilde{\mathcal {S}}_B^{0}:$ $H^{-\frac{1}{2}}(\partial B)\rightarrow\
H^{\frac{1}{2}}(\partial B)$ is invertible. Thanks to the Calder{\'o}n identity:
\begin{align}\label{}
\mathcal {K}_B^{0}\widetilde{\mathcal{S}}_B^{0}=\widetilde{\mathcal {S}}_B^{0}(\mathcal{K}^{0}_B)^*,
\end{align}
from the invertibility and positivity of $-\widetilde{\mathcal {S}}_B^{0}$, we can define the inner
product
\begin{align}\label{inner01}
\langle u,v\rangle_{\mathcal {H}^*(\partial B)}=-(u,\widetilde{\mathcal
{S}}_B^{0}[v])_{-\frac{1}{2},\frac{1}{2}}.
\end{align}
Then, we have the following results
\begin{lem}\label{}
Let $B$ be of class $C^{1,\alpha}$. Then

(1) $(\mathcal {K}_B^{0})^*$ is a compact self-adjoint
operator in the Hilbert space $\mathcal {H}^*(\partial B)$ equipped
with the inner product (\ref{inner01}), which is equivalent to the
original one;

(2) Let $\left\{\lambda_{j},\varphi_{j}\right\},\ j=0,1,2,\cdots$, be the eigenvalue
and eigenfunction pair of $(\mathcal {K}_B^{0})^*$,
here $\lambda_{0}=\frac{1}{2}$. Then,
$\lambda_{j}\in(-\frac{1}{2},\frac{1}{2}]$, and
$\lambda_{j}\rightarrow0$ as $j\rightarrow\infty$;

(3) $\mathcal {H}^*(\partial B)=\mathcal {H}_0^*(\partial
B)\oplus\{c\varphi_{0}\},\ c\in\mathbb{C}$, where $\mathcal
{H}_0^*(\partial B)=\{\varphi\in\mathcal {H}^*(\partial
B):\int_{\partial B}\varphi d\sigma=0\}$;

(4) For any $\psi\in\ H^{-\frac{1}{2}}(\partial B)$, it hold:
\begin{align}\label{specexpan01}
(\mathcal{K}^{0}_B)^*[\psi]=\sum_{j=0}^\infty\lambda_{j}a_{j}^{-1}\langle\psi,\varphi_{j}\rangle_{\mathcal
{H}^*(\partial B)}\varphi_{j},
\end{align}
where
\begin{align*}
a_{j}=\langle\varphi_{j},\varphi_{j}\rangle_{\mathcal
{H}^*(\partial B)}.
\end{align*}
\end{lem}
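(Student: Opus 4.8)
The plan is to establish the four assertions in turn, everything resting on the Calder\'on identity $\mathcal{K}_B^0\widetilde{\mathcal{S}}_B^0=\widetilde{\mathcal{S}}_B^0(\mathcal{K}_B^0)^*$ and on the structure of the modified single layer potential $\widetilde{\mathcal{S}}_B^0$. For (1), I would first use that $\p B\in C^{1,\alpha}$ to see that the kernel $\p_{\nu(\Bx)}G_0(\Bx-\By)=\frac{1}{2\pi}\la\By-\Bx,\nu(\Bx)\ra/|\Bx-\By|^2$ satisfies $|\la\By-\Bx,\nu(\Bx)\ra|\lesssim|\Bx-\By|^{1+\alpha}$, hence is weakly singular of order $|\Bx-\By|^{-1+\alpha}$; therefore $(\mathcal{K}_B^0)^*$ is compact on $L^2(\p B)$ and, through its mapping properties on the Sobolev scale, also on $H^{-1/2}(\p B)$. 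Self-adjointness in the new inner product comes from writing, for $u,v\in\mathcal{H}^*(\p B)$,
$$\la(\mathcal{K}_B^0)^*u,v\ra_{\mathcal{H}^*(\p B)}=-\big((\mathcal{K}_B^0)^*u,\widetilde{\mathcal{S}}_B^0[v]\big)_{-\frac{1}{2},\frac{1}{2}}=-\big(u,\mathcal{K}_B^0\widetilde{\mathcal{S}}_B^0[v]\big)_{-\frac{1}{2},\frac{1}{2}},$$
using the duality pairing between $(\mathcal{K}_B^0)^*$ and $\mathcal{K}_B^0$, and then invoking the Calder\'on identity to rewrite the last term as $-(u,\widetilde{\mathcal{S}}_B^0(\mathcal{K}_B^0)^*[v])=\la u,(\mathcal{K}_B^0)^*v\ra_{\mathcal{H}^*(\p B)}$. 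That $\la\cdot,\cdot\ra_{\mathcal{H}^*(\p B)}$ is a genuine inner product with norm equivalent to $\|\cdot\|_{H^{-1/2}(\p B)}$ follows from the positivity of $-\widetilde{\mathcal{S}}_B^0$, which gives $\|\psi\|_{\mathcal{H}^*(\p B)}^2=-(\psi,\widetilde{\mathcal{S}}_B^0[\psi])>0$ for $\psi\neq0$, combined with the boundedness of $\widetilde{\mathcal{S}}_B^0:H^{-1/2}(\p B)\to H^{1/2}(\p B)$ and of its inverse, which together sandwich $\|\psi\|_{\mathcal{H}^*(\p B)}^2$ between fixed multiples of $\|\psi\|_{H^{-1/2}(\p B)}^2$. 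This positivity/equivalence is the one genuinely delicate point: in two dimensions $\mathcal{S}_B^0$ is not sign-definite on all of $H^{-1/2}(\p B)$ owing to the logarithmic kernel, which is precisely why the mean-zero modification $\widetilde{\mathcal{S}}_B^0$, together with a normalizing rescaling of $B$ (making $-\mathcal{S}_B^0$ positive on mean-zero densities and $\mathcal{S}_B^0[\varphi_0]\not\equiv0$), is required. I expect this to be the main obstacle; the remaining steps are routine spectral theory.

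Granting (1), assertion (2) follows from the spectral theorem for compact self-adjoint operators together with two classical facts. The eigenvalue $\lambda_0=\frac{1}{2}$ is realized by the equilibrium density $\varphi_0$: from $(\frac{1}{2}I-(\mathcal{K}_B^0)^*)[\varphi_0]=0$ and the trace formula \eqref{eq:trace}, $\p_\nu\mathcal{S}_B^0[\varphi_0]|_-=0$, so $\mathcal{S}_B^0[\varphi_0]$ is constant in $B$. The two-sided bound $\lambda_j\in(-\frac{1}{2},\frac{1}{2}]$ I would extract from the energy identities
$$\la(\tfrac{1}{2}I-(\mathcal{K}_B^0)^*)[\psi],\psi\ra_{\mathcal{H}^*(\p B)}=\big\|\nabla\mathcal{S}_B^0[\psi]\big\|_{L^2(B)}^2\geq0,$$
$$\la(\tfrac{1}{2}I+(\mathcal{K}_B^0)^*)[\psi],\psi\ra_{\mathcal{H}^*(\p B)}=\big\|\nabla\mathcal{S}_B^0[\psi]\big\|_{L^2(\RR^2\setminus\overline{B})}^2\geq0,$$
obtained by Green's identity in $B$ and in $\RR^2\setminus\overline{B}$ (the boundary term at infinity vanishing for mean-zero densities); specializing to $\psi=\varphi_j$ yields $\lambda_j\leq\frac{1}{2}$ and $\lambda_j\geq-\frac{1}{2}$, and $\lambda_j=-\frac{1}{2}$ is excluded because it would make $\mathcal{S}_B^0[\varphi_j]$ constant in $\RR^2\setminus\overline{B}$, hence $\varphi_j=0$ by the decay at infinity and the jump relations. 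Finally $\lambda_j\to0$ is the compactness.

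For (3), the normalization $(\varphi_0,\chi(\p B))_{-\frac{1}{2},\frac{1}{2}}=1$ lets one decompose any $\psi\in\mathcal{H}^*(\p B)$ as $\psi=(\int_{\p B}\psi\,d\sigma)\varphi_0+\psi_0$ with $\psi_0\in\mathcal{H}_0^*(\p B)$, and this sum is direct and $\mathcal{H}^*(\p B)$-orthogonal since
$$\la\varphi_0,\psi_0\ra_{\mathcal{H}^*(\p B)}=-\big(\varphi_0,\mathcal{S}_B^0[\psi_0]\big)=-\big(\mathcal{S}_B^0[\varphi_0],\psi_0\big)=-c_0\big(\chi(\p B),\psi_0\big)=0,$$
using $\mathcal{S}_B^0[\varphi_0]\equiv c_0$ on $\p B$, the symmetry of the single layer potential, and $\int_{\p B}\psi_0\,d\sigma=0$. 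For (4), since $(\mathcal{K}_B^0)^*$ is compact and self-adjoint on the Hilbert space $(\mathcal{H}^*(\p B),\la\cdot,\cdot\ra_{\mathcal{H}^*(\p B)})$, its eigenfunctions $\{\varphi_j\}$ form a complete orthogonal system; expanding $\psi=\sum_j a_j^{-1}\la\psi,\varphi_j\ra_{\mathcal{H}^*(\p B)}\varphi_j$ with $a_j=\la\varphi_j,\varphi_j\ra_{\mathcal{H}^*(\p B)}$ and applying $(\mathcal{K}_B^0)^*$ term by term produces \eqref{specexpan01}.
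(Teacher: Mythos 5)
Your proposal is correct, and it is worth noting that the paper itself does not prove this lemma at all: it is quoted as standard background on the spectral theory of the Neumann--Poincar\'e operator and attributed to the cited literature (Ammari--Ciraolo--Kang--Lee--Milton and related works). What you have written is essentially the canonical proof from those references: compactness from the weakly singular kernel on a $C^{1,\alpha}$ boundary, symmetrization of $(\Kcal_B^0)^*$ in the $\mathcal H^*(\p B)$ inner product via the Calder\'on identity, the Plemelj-type energy identities in $B$ and in $\RR^2\setminus\overline{B}$ to pin $\lambda_j$ in $(-\tfrac12,\tfrac12]$, and the Hilbert--Schmidt expansion for (4). Two small points to tighten: when you specialize the exterior energy identity to $\psi=\varphi_j$, $j\geq 1$, you should record that $\int_{\p B}\varphi_j\,d\sigma=0$, which follows from $\lambda_j\neq\tfrac12$ together with $\Kcal_B^0[\chi(\p B)]=\tfrac12\chi(\p B)$, since the identity and the decay at infinity are only available for mean-zero densities; and the ``normalizing rescaling of $B$'' you invoke is not actually needed here --- the whole point of the substitute $\widetilde{\Scal}_B^0$ is to restore definiteness on the one-dimensional complement of the mean-zero subspace without rescaling (indeed, for mean-zero densities the exterior energy identity already gives the sign, independently of the logarithmic capacity), so positivity of $-\widetilde{\Scal}_B^0$ plus its bounded invertibility yields the norm equivalence exactly as you argue.
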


\begin{rem}
Note that $(\widetilde{\mathcal
{S}}_B^{0})^{-1}[\chi(\partial B)]=\varphi_{0}$, and
$-\frac{1}{2}\mathcal {I}+(\mathcal
{K}_B^{0})^*=\left(-\frac{1}{2}\mathcal {I}+(\mathcal
{K}_B^{0})^*\right)\mathcal {P}_{\mathcal {H}_0^*(\partial
B)}$, where $\mathcal {P}_{\mathcal {H}_0^*(\partial
B)}$ is the orthogonal projection onto $\mathcal {H}_0^*(\partial B)$. Furthermore, it
follows that
\begin{align}\label{re01}
\left(-\frac{1}{2}\mathcal {I}+(\mathcal
{K}_B^{0})^*\right)(\widetilde{\mathcal
{S}}_B^{0})^{-1}[\chi(\partial B)]=0.
\end{align}
\end{rem}

\textcolor{black}{We also introduce the function space $\mathcal{H}(\partial B)$ which is the space $H^{\frac{1}{2}}(\partial B)$ equipped with the following inner product
\begin{align}\label{3.2}
\langle u,v\rangle_{\mathcal {H}(\partial B)}=-((\widetilde{\mathcal
{S}}_{B}^0)^{-1}[u],v)_{-\frac{1}{2},\frac{1}{2}}.
\end{align}
It can be directly verified that $\widetilde{\mathcal{S}}_ {B}^0$ is an isometry between $\mathcal {H}^*(\partial B)$ and $\mathcal{H}(\partial B)$, and $\mathcal {H}^*(\partial B)$ is the dual space of $\mathcal{H}(\partial B)$. From now on, we use $(\cdot,\cdot)$ as the standard inner product in $\mathbb{R}^2$. The inner product (\ref{inner01}) and the corresponding norm on $\partial D_\delta$ are denoted by $\langle\cdot,\cdot\rangle$ and $\|\cdot\|$ in short, respectively. $A\lesssim B$ means $A\leq CB$ for some generic positive constant $C$. $A\approx B$ means that $A\lesssim B$ and $B\lesssim A$.
}

In what follows, we always suppose that $\delta\ll 1$ and each eigenvalue of the operator Neumann-Poincar\'e operator $\mathcal {K}_{D_\delta}^*$ is simple.
We shall first present some asymptotic expansions of the Neumann-Poincar\'e operator with respect to $\delta$.
Recalling that $\p D_\delta=S_\delta^a \cup \overline{S_\delta^f} \cup S_\delta^b$, we decompose the Neumann-Poincar\'e operator into several parts accordingly. To that end, we introduce the following
boundary integral operator:
\beq\label{eq:defbnd01}
\Kcal_{\mathcal{S},\mathcal{S}'}[\varphi](\Bx):=\chi(\mathcal{S}')\frac{1}{2\pi}\int_{\mathcal{S}} \frac{( \Bx-\By, \nu_x)}{|\Bx-\By|^2}\varphi(\By)d\sigma(\By), \quad for\quad \mathcal{S}\cap \mathcal{S}'=\emptyset.
\eeq
It is obvious that $\Kcal_{\Scal,\Scal'}$ is a bounded operator from $L^2(\Scal)$ to $L^2(\Scal')$.
For the subsequent use, we also introduce the following regions:
\begin{align}\label{}
\iota_{\delta}(P):=\{\Bx;\ |P-\Bz_{x}|=\Ocal(\delta),\ \Bx\in S_\delta^f\},\\
\iota_{\delta}(Q):=\{\Bx;\ |Q-\Bz_{x}|=\Ocal(\delta),\ \Bx\in S_\delta^f\}.
\end{align}
Define $\tilde\varphi(\tilde\Bx):=\varphi(\Bx)$, where $\Bx\in S_\delta^a, S_\delta^b$ and $\tilde\Bx\in S^a, S^b$.

The asymptotic expansion of the Neumann-Poincar\'e operator is given in \cite{fang2021}.

\begin{lem}\label{lem3.2}
The Neumann-Poincar\'e operator $\Kcal_{D_\delta}^*$ admits the following asymptotic expansion:
\beq\label{eq:leasyNP01}
\Kcal_{D_\delta}^*[\varphi](\Bx)= \Kcal_0[\varphi](\Bx) +\delta\Kcal_1[\varphi](\Bx)+ \Ocal(\delta^2),
\eeq
where $\Kcal_0$ is defined by
\beq\label{eq:leasyNP02}
\begin{split}
\Kcal_0[\varphi](\Bx)& =\chi(S_\delta^a)\Big( \Kcal_{S_\delta^f, S_\delta^a}[\varphi](\Bx)+\frac{1}{4\pi}\int_{S^a} \tilde\varphi(\tilde{\By})d\tilde{\sigma}(\tilde{\By})\Big)+
\chi(S_\delta^b)\Big( \Kcal_{S_\delta^f, S_\delta^b}[\varphi](\Bx)+\frac{1}{4\pi}\int_{S^b} \tilde\varphi(\tilde{\By})d\tilde{\sigma}(\tilde{\By})\Big) \\
&+\mathcal{A}_{\Gamma_{2,\delta}, \Gamma_{1,\delta}}[\varphi]+ \mathcal{A}_{\Gamma_{1,\delta}, \Gamma_{2,\delta}}[\varphi]+ \chi(\iota_{\delta}(P))\Kcal_{S_\delta^a, S_\delta^f}[\varphi](\Bx)+\chi(\iota_{\delta}(Q))\Kcal_{S_\delta^b, S_\delta^f}[\varphi](\Bx),
\end{split}
\eeq
and
\beq\label{eq:leasyNP03}
\begin{split}
\Kcal_1[\varphi]=& \chi(S_\delta^b)\frac{\la \Bx-P, \nu_x\ra}{2\pi|\Bx-P|^2}\int_{S^a} \tilde\varphi(\tilde{\By})d\tilde{\sigma}(\tilde{\By})+\chi(S_\delta^a)\frac{\la \Bx-Q, \nu_x\ra}{2\pi|\Bx-Q|^2}\int_{S^b} \tilde\varphi(\tilde{\By})d\tilde{\sigma}(\tilde{\By})\\
&+ \chi(S_\delta^f\setminus\iota_{\delta}(P))\left(\frac{\delta}{|\Bx-P|^2}\int_{S^a}(1-\la \tilde\By- P, \nu_x\ra) \tilde\varphi(\tilde{\By})d\tilde{\sigma}(\tilde{\By})+o\Big(\frac{\delta}{|\Bx-P|^2}\Big)\right)\\
&+\chi(S_\delta^f\setminus\iota_{\delta}(Q))\left(\frac{\delta}{|\Bx-Q|^2}\int_{S^b}(1-\la \tilde\By- Q, \nu_x\ra) \tilde\varphi(\tilde{\By})d\tilde{\sigma}(\tilde{\By})+o\Big(\frac{\delta}{|\Bx-P|^2}\Big)\right).
\end{split}
\eeq
Here, the operators $\mathcal{A}_{\Gamma_{1,\delta}, \Gamma_{2,\delta}}$ and $\mathcal{A}_{\Gamma_{2,\delta}, \Gamma_{1,\delta}}$ are defined by
\beq\label{eq:leasyNP0301}
\begin{split}
\mathcal{A}_{\Gamma_{1,\delta}, \Gamma_{2,\delta}}[\varphi](\Bx)=&\frac{1}{\pi}\chi(\Gamma_{2,\delta})\int_{\Gamma_{1,\delta}}\frac{\delta}{|\Bx-\By|^2}\varphi(\By)d\sigma(\By), \\
 \mathcal{A}_{\Gamma_{2,\delta}, \Gamma_{1,\delta}}[\varphi](\Bx)=&\frac{1}{\pi}\chi(\Gamma_{1,\delta})\int_{\Gamma_{2,\delta}}\frac{\delta}{|\Bx-\By|^2}\varphi(\By)d\sigma(\By).
\end{split}
\eeq
\end{lem}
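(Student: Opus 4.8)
The plan is to derive the asymptotic expansion \eqref{eq:leasyNP01} by explicitly decomposing the integral in the definition of $\Kcal_{D_\delta}^*$ according to the geometric decomposition $\p D_\delta = S_\delta^a \cup \overline{S_\delta^f} \cup S_\delta^b$ and then performing a careful change of variables on each piece to expose the $\delta$-dependence. Concretely, for $\Bx, \By \in \p D_\delta$ one writes
\beq\label{eq:planNPkernel}
\Kcal_{D_\delta}^*[\varphi](\Bx) = \frac{1}{2\pi}\int_{\p D_\delta} \frac{(\Bx - \By, \nu_x)}{|\Bx - \By|^2}\varphi(\By)\, d\sigma(\By),
\eeq
which is the static principal part; the Hankel-function correction in $G_k$ contributes only at order $\Ocal(\omega^2\ln\omega)$ and is irrelevant here since $\Kcal_{D_\delta}^*$ is the \emph{static} Neumann--Poincar\'e operator. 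The integral in \eqref{eq:planNPkernel} then splits into nine pieces depending on whether $\Bx$ lies on a cap ($S_\delta^a$ or $S_\delta^b$) or on the flat part $S_\delta^f$, and whether $\By$ lies on a cap or the flat part. The diagonal self-interaction terms (cap-with-same-cap, flat-with-same-flat) produce the zeroth-order contributions: the half-disk caps, after rescaling $\tilde\Bx = (\Bx - P)/\delta$ so that $S_\delta^a \mapsto S^a$, have a scale-invariant kernel, and a direct computation of $\int_{S^a} \frac{(\tilde\Bx - \tilde\By, \nu_{\tilde x})}{|\tilde\Bx - \tilde\By|^2} d\tilde\sigma(\tilde\By)$ on the half-disk geometry yields the constant $\tfrac{1}{4\pi}\int_{S^a}\tilde\varphi\, d\tilde\sigma$ term (this is the well-known fact that the Neumann--Poincar\'e operator of a disk restricted to its boundary acts in a simple way), while the flat self-interaction gives the operators $\Kcal_{S_\delta^f, S_\delta^a}$, etc., and the near-diagonal cap--flat interactions localized to $\iota_\delta(P), \iota_\delta(Q)$.

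The heart of the computation is the cap--flat and flat--flat cross terms where $\Bx$ and $\By$ are on opposite sides of the rod ($\Gamma_{1,\delta}$ versus $\Gamma_{2,\delta}$) or on a cap and a far portion of the flat part. Here one Taylor-expands the kernel $\frac{(\Bx-\By,\nu_x)}{|\Bx-\By|^2}$ in powers of $\delta$ around the projections $\Bz_x, \Bz_y$ on $\Gamma_0$. For the two-sided flat interaction, writing $\Bx = \Bz_x + \delta\Bn$ and $\By = \Bz_y - \delta\Bn$, the separation is $|\Bx - \By|^2 = |\Bz_x - \Bz_y|^2 + 4\delta^2$ and the numerator $(\Bx - \By, \nu_x) = 2\delta$ up to higher order (since $\Gamma_0$ is straight, $\nu_x = \Bn$ exactly), which produces the kernel $\frac{\delta}{|\Bx - \By|^2}$ in $\mathcal{A}_{\Gamma_{1,\delta},\Gamma_{2,\delta}}$; the factor $\tfrac{1}{\pi}$ rather than $\tfrac{1}{2\pi}$ comes from combining the $2\delta$ numerator with the $\tfrac{1}{2\pi}$ prefactor. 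For the cap-with-far-flat and cap-with-opposite-cap interactions, one expands $|\Bx - \By|^{-2}$ around $\By = P$ (or $Q$), giving the leading constant-kernel contribution $\frac{\la \Bx - P, \nu_x\ra}{2\pi|\Bx - P|^2}\int_{S^a}\tilde\varphi\,d\tilde\sigma$ at order $\delta$ together with the next-order correction $\frac{\delta}{|\Bx - P|^2}\int_{S^a}(1 - \la\tilde\By - P, \nu_x\ra)\tilde\varphi\,d\tilde\sigma$; the $d\sigma$-measure on the cap contributes a factor $\delta$ (hence these are $\Ocal(\delta)$ corrections to $\Kcal_0$), and one tracks the Jacobian $d\sigma(\By) = \delta\, d\tilde\sigma(\tilde\By)$ carefully.

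The main obstacle I expect is the uniformity of the error estimates near the junction regions $\iota_\delta(P)$ and $\iota_\delta(Q)$, where the rod meets the caps and the boundary is only $C^{1,\alpha}$ with $\alpha = \alpha(\delta)$ degenerating as $\delta \to 0$. In these transition zones neither the ``cap'' nor the ``flat'' expansion is valid, and one must show that the contribution of these $\Ocal(\delta)$-length arcs to $\Kcal_{D_\delta}^*[\varphi]$ is genuinely $\Ocal(\delta)$ in the relevant operator norm (on $\mathcal H^*(\p D_\delta)$), which requires a delicate estimate of the singular integral over a small, geometrically irregular patch — the logarithmic behavior of the 2D kernel makes this more subtle than the 3D analogue. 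The remaining routine-but-lengthy bookkeeping is organizing the nine pieces, verifying that the cross terms not displayed in \eqref{eq:leasyNP02}--\eqref{eq:leasyNP03} are indeed $\Ocal(\delta^2)$, and checking the measure/Jacobian factors so that the stated constants ($\tfrac{1}{4\pi}$, $\tfrac{1}{\pi}$, $\tfrac{1}{2\pi}$) come out correctly. Since the statement attributes this expansion to \cite{fang2021}, I would also cross-check the normalization conventions there against the ones fixed in \eqref{eq:defbnd01} before committing to the displayed constants.
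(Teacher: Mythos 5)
Your route---splitting $\p D_\delta$ into the two half-disk caps and the two flat sides, rescaling the caps, and Taylor-expanding the kernel about the projections $\Bz_x,\Bz_y$ on $\Gamma_0$---is precisely the method this paper uses for the companion single-layer expansion (Lemma \ref{le:app0103}); for the present lemma the paper itself supplies no proof and simply cites \cite{fang2021}, whose argument is of the same case-by-case type. Your spot checks are right: for $\Bx\in\Gamma_{2,\delta}$, $\By\in\Gamma_{1,\delta}$ one has $(\Bx-\By,\nu_x)=2\delta$ and $|\Bx-\By|^2=|\Bz_x-\Bz_y|^2+4\delta^2$ exactly (straight rod), which with the prefactor $\frac{1}{2\pi}$ yields the kernel $\frac{\delta}{\pi|\Bx-\By|^2}$ of $\mathcal{A}_{\Gamma_{1,\delta},\Gamma_{2,\delta}}$; the cap self-interaction is scale invariant with $(\tilde\Bx-\tilde\By,\nu_{\tilde x})/|\tilde\Bx-\tilde\By|^2=\tfrac12$ on the unit half-circle, giving $\frac{1}{4\pi}\int_{S^a}\tilde\varphi$; and the cap-to-far-field terms carry the Jacobian $d\sigma(\By)=\delta\,d\tilde\sigma(\tilde\By)$, which is why they enter at order $\delta$ with the stated kernels. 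One correction, though: the obstacle you single out---proving that the junction arcs near $P$ and $Q$ contribute only $\Ocal(\delta)$---is not actually demanded by the statement. The near-junction interactions are retained, unexpanded and at leading order, inside $\Kcal_0$ through the terms $\chi(\iota_\delta(P))\Kcal_{S_\delta^a,S_\delta^f}$, $\chi(\iota_\delta(Q))\Kcal_{S_\delta^b,S_\delta^f}$ (and $\Kcal_{S_\delta^f,S_\delta^a}$, $\Kcal_{S_\delta^f,S_\delta^b}$ for $\Bx$ on the caps), so no smallness claim is made there. What does require care is uniformity of the Taylor expansions in the cross terms away from the ends, and this is handled exactly as in the proof of Lemma \ref{le:app0103}, by cutting out $\delta^\gamma$- (or $\delta^\epsilon$-) neighborhoods of $P$, $Q$ and tracking the resulting $\delta$-dependent remainders, which is also why $\Kcal_1$ still depends on $\delta$ and $\delta\Kcal_1$ obeys \eqref{eq:descrK101}. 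With that adjustment your plan goes through and reproduces \eqref{eq:leasyNP02}--\eqref{eq:leasyNP0301} with the stated constants.
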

\begin{rem}
We mention that the operator $\Kcal_1$ appeared in \eqnref{eq:leasyNP01} still depends on $\delta$. In fact, if $\Bx\in \Gamma_{j,\delta}\setminus\big(\iota_{\delta}(P)\cup \iota_{\delta}(Q)\big)$, $j=1, 2$. Then the operator $\delta\Kcal_1$ admits the following asymptotic form:
\beq\label{eq:descrK101}
\delta\Kcal_1=\chi(\iota_{\delta^\epsilon}(P)\cup \iota_{\delta^\epsilon}(Q))\Ocal(\delta^{2(1-\epsilon)}) +\Ocal(\delta^2), \quad 0<\epsilon<1.
\eeq
\end{rem}

Next, we consider the asymptotic expansions for single layer potential operator $\Scal_{D_\delta}^0$ with respect to $\delta$. To begin with, we first define the following subregions
\beq
\begin{split}\label{eq:readdnew01}
\iota_{1,\delta^{\gamma}}(\tilde{x}):&=\{\tilde{y}\mid|z_{\tilde{x}}-z_{\tilde{y}}|<\delta^\gamma,\tilde{y}\in\partial D\},\\
\iota_{1,\delta^{\gamma}}^{(j)}(\tilde{x}):&=\{\tilde{y}\mid|z_{\tilde{x}}-z_{\tilde{y}}|<\delta^\gamma,\tilde{y}\in\Gamma_j\},\ \ j=1,2,\\
\iota_{1,\delta^{\gamma}}(P):&=\{\tilde{y}\mid|P-z_{\tilde{y}}|<\delta^{\gamma},\tilde{y}\in\partial D\},\\
\iota_{1,\delta^{\gamma}}(Q):&=\{\tilde{y}\mid|Q-z_{\tilde{y}}|<\delta^{\gamma},\tilde{y}\in\partial D\},\\
\iota_{1,\delta^{\gamma}}^{(j)}(P):&=\{\tilde{y}\mid|P-z_{\tilde{y}}|<\delta^{\gamma},\tilde{y}\in\Gamma_j\},\ \ j=1,2,\\
\iota_{1,\delta^{\gamma}}^{(j)}(Q):&=\{\tilde{y}\mid|Q-z_{\tilde{y}}|<\delta^{\gamma},\tilde{y}\in\Gamma_j\},\ \ j=1,2,\\
\end{split}
\eeq
and
\beq
\begin{split}
\iota_{\delta^{\gamma}}(x):&=\{y\mid|z_{x}-z_{y}|<\delta^\gamma,y\in\partial D_\delta\},\\
\iota_{\delta^{\gamma}}^{(j)}(x):&=\{y\mid|z_{x}-z_{y}|<\delta^\gamma,y\in\Gamma_{j,\delta}\},\ \ j=1,2,\\
\iota_{\delta^{\gamma}}(P):&=\{y\mid|P-z_{y}|<\delta^{\gamma},y\in\partial D_\delta\},\\
\iota_{\delta^{\gamma}}(Q):&=\{y\mid|Q-z_{y}|<\delta^{\gamma},y\in\partial D_\delta\},\\
\iota_{\delta^{\gamma}}^{(j)}(P):&=\{y\mid|P-z_{y}|<\delta^{\gamma},y\in\Gamma_{j,\delta}\},\ \ j=1,2,\\
\iota_{\delta^{\gamma}}^{(j)}(Q):&=\{y\mid|Q-z_{y}|<\delta^{\gamma},y\in\Gamma_{j,\delta}\},\ \ j=1,2,
\end{split}
\eeq
where $\gamma\in(0,1)$ is a parameter which can be selected arbitrarily.

Meanwhile, we introduce the following boundary integral operators:
\beq
\begin{split}\label{laypot02}
\mathcal{S}_{\Lambda_1,\Lambda_2}[\tilde{\varphi}](\tilde{x}):&=\frac{1}{2\pi}\chi(\Lambda_2)\int_{\Lambda_1} \ln|z_{\tilde{x}}-z_{\tilde{y}}|\tilde{\varphi}(\tilde{y}) d\tilde{\Gs}_{\tilde{y}},\\
\mathcal{S}_{\Lambda_1,\Lambda_2}^{(0,1)}[\tilde{\varphi}](\tilde{x}):&=\frac{1}{2\pi}\chi(\Lambda_2)\int_{\Lambda_1} \ln|\delta(\tilde{x}-\tilde{y})+(1-\delta)(z_{\tilde{x}}-z_{\tilde{y}})|\tilde{\varphi}(\tilde{y}) d\tilde{\Gs}_{\tilde{y}},\\
\mathcal{S}_{\Lambda_1,\Lambda_2}^{(0,2)}[\tilde{\varphi}](\tilde{x}):&=\delta\frac{1}{2\pi}\chi(\Lambda_2)\int_{\Lambda_1} \ln|\delta(\tilde{x}-\tilde{y})+(1-\delta)(z_{\tilde{x}}-z_{\tilde{y}})|\tilde{\varphi}(\tilde{y}) d\tilde{\Gs}_{\tilde{y}},\\
\Pi_{\Lambda_1,\Lambda_2}^{(0)}[\tilde{\varphi}](\tilde{x}):&=\frac{1}{2\pi}\chi(\Lambda_2)\int_{\Lambda_1} \tilde{\varphi}(\tilde{y}) d\tilde{\Gs}_{\tilde{y}},\\
\mathcal{S}_{\Lambda_1,\Lambda_2}^{(1,0)}[\tilde{\varphi}](\tilde{x}):&=\frac{1}{2\pi}\chi(\Lambda_2)\int_{\Lambda_1} \frac{(z_{\tilde{x}}-z_{\tilde{y}},\tilde{x}-z_{\tilde{x}})}{|z_{\tilde{x}}-z_{\tilde{y}}|^2}\tilde{\varphi}(\tilde{y}) d\tilde{\Gs}_{\tilde{y}},
\end{split}
\eeq
and
\beq
\begin{split}
\mathcal{S}_{\Lambda_1,\Lambda_2}^{(1,1)}[\tilde{\varphi}](\tilde{x}):&=\frac{1}{2\pi}\chi(\Lambda_2)\int_{\Lambda_1} \ln|\tilde{x}-\tilde{y}|\tilde{\varphi}(\tilde{y}) d\tilde{\Gs}_{\tilde{y}},\\
\mathcal{S}_{\Lambda_1,\Lambda_2}^{(2,0)}[\tilde{\varphi}](\tilde{x}):&=\frac{1}{2\pi}\chi(\Lambda_2)\int_{\Lambda_1} \left(\frac{1+(\tilde{x}-z_{\tilde{x}},z_{\tilde{y}}-\tilde{y})}{|z_{\tilde{x}}-z_{\tilde{y}}|^2}-
\frac{(z_{\tilde{x}}-z_{\tilde{y}},\tilde{x}-z_{\tilde{x}})^2}{|z_{\tilde{x}}-z_{\tilde{y}}|^4}\right)
\tilde{\varphi}(\tilde{y}) d\tilde{\Gs}_{\tilde{y}},\\
\mathcal{S}_{\Lambda_1,\Lambda_2}^{(2,1)}[\tilde{\varphi}](\tilde{x}):&=\frac{1}{\pi}\chi(\Lambda_2)\int_{\Lambda_1} \frac{1}{|z_{\tilde{x}}-z_{\tilde{y}}|^2}\tilde{\varphi}(\tilde{y}) d\tilde{\Gs}_{\tilde{y}},\\
\mathcal{S}_{\Lambda_1,\Lambda_2}^{(2,2)}[\tilde{\varphi}](\tilde{x}):&=\frac{1}{2\pi}\chi(\Lambda_2)\int_{\Lambda_1} \frac{(z_{\tilde{y}}-z_{\tilde{x}},\tilde{y}-z_{\tilde{y}})}{|z_{\tilde{x}}-z_{\tilde{y}}|^2}\tilde{\varphi}(\tilde{y}) d\tilde{\Gs}_{\tilde{y}},\\
\mathcal{S}_{\Lambda_1,\Lambda_2}^{(2,3)}[\tilde{\varphi}](\tilde{x}):&=\frac{1}{2\pi}\chi(\Lambda_2)\int_{\Lambda_1} \frac{(z_{\tilde{x}}-z_{\tilde{y}},\tilde{x}-\tilde{y}+z_{\tilde{y}}-z_{\tilde{x}})}{|z_{\tilde{x}}-z_{\tilde{y}}|^2}\tilde{\varphi}(\tilde{y}) d\tilde{\Gs}_{\tilde{y}},
\end{split}
\eeq
where $\Lambda_j\subset\partial D$ $(j=1,2)$ are open surfaces.

\begin{lem}\label{le:app0103}
Let $\varphi\in\mathcal {H}^*(\partial D_\delta)$ and $\tilde{\varphi}(\tilde{x})=\varphi(x)$ for $x\in\partial D_\delta$ and $\tilde{x}\in\partial D$. Then the following asymptotic results hold:

\textcolor{black}{\begin{equation}\label{singexp1}
\mathcal{S}_{D_\delta}^0[\varphi](x)= \mathcal{S}_{D,0}[\tilde{\varphi}](\tilde{x})+(\delta\ln\delta)\mathcal{S}_{D,\ln}[\tilde{\varphi}](\tilde{x})
+\delta\mathcal{S}_{D,1}[\tilde{\varphi}](\tilde{x})+\delta^2\mathcal{S}_{D,2}[\tilde{\varphi}](\tilde{x})+\mathcal{O}(\delta^3),
\end{equation}}
where
\begin{align*}\label{}
&\mathcal{S}_{D,0}[\tilde{\varphi}](\tilde{x}):=\bigg(\mathcal{S}_{S^f\setminus\iota_{1,\delta^\gamma}(P), S^a}+\mathcal{S}_{S^f\setminus\iota_{1,\delta^\gamma}(Q),S^b}
+\mathcal{S}_{S^f\setminus\iota_{1,\delta^\gamma}^{(2)}(\tilde{x}),\Gamma_1}
+\mathcal{S}_{S^f\setminus\iota_{1,\delta^\gamma}^{(1)}(\tilde{x}),\Gamma_2}
\\
&\ \ \ +\mathcal{S}_{\iota_{1,\delta^\gamma}(\tilde{x}), S^a}^{(0,1)}+\mathcal{S}_{\iota_{1,\delta^\gamma}(\tilde{x}), S^b}^{(0,1)}
+\mathcal{S}_{\iota_{1,\delta^\gamma}^{(2)}(\tilde{x}),\Gamma_1}^{(0,1)}
+\mathcal{S}_{\iota_{1,\delta^\gamma}^{(1)}(\tilde{x}),\Gamma_2}^{(0,1)}
+\mathcal{S}_{S^a,\iota_{1,\delta^\gamma}(P)}^{(0,2)}
+\mathcal{S}_{S^b,\iota_{1,\delta^\gamma}(Q)}^{(0,2)}\bigg)[\tilde{\varphi}](\tilde{x}),
\end{align*}

\begin{align*}
\mathcal{S}_{D,\ln}[\tilde{\varphi}](\tilde{x}):=\bigg(\Pi_{S^a, S^a}^{(0)}+\Pi_{S^b,S^b}^{(0)}\bigg)[\tilde{\varphi}](\tilde{x}),
\end{align*}

\begin{align*}
\mathcal{S}_{D,1}[\tilde{\varphi}](\tilde{x}):&=\bigg(\mathcal{S}_{S^a,S^f\setminus\iota_{1,\delta^\gamma}(P)}
+\mathcal{S}_{S^b,S^f\setminus\iota_{1,\delta^\gamma}(Q)}
+\mathcal{S}_{S^c,S^c}+\mathcal{S}_{S^f\setminus\iota_{1,\delta^\gamma}(P), S^a}^{(1,0)}+\mathcal{S}_{S^f\setminus\iota_{1,\delta^\gamma}(Q),S^b}^{(1,0)}
\bigg)[\tilde{\varphi}](\tilde{x}),
\end{align*}

\begin{align*}
\mathcal{S}_{D,2}[\tilde{\varphi}](\tilde{x}):&=\bigg(\mathcal{S}_{S^f\setminus\iota_{1,\delta^\gamma}(P), S^a}^{(2,0)}+\mathcal{S}_{S^f\setminus\iota_{1,\delta^\gamma}(Q),S^b}^{(2,0)}
+\mathcal{S}_{\iota_{1,\delta^\gamma}^{(2)}(\tilde{x}),\Gamma_1}^{(2,1)}
+\mathcal{S}_{\iota_{1,\delta^\gamma}^{(1)}(\tilde{x}),\Gamma_2}^{(2,1)}
\\
&\ \ \ \ +\mathcal{S}_{S^a,S^f\setminus\iota_{1,\delta^\gamma}(P)}^{(2,2)}
+\mathcal{S}_{S^b,S^f\setminus\iota_{1,\delta^\gamma}(Q)}^{(2,2)} +\mathcal{S}_{S^a, S^b}^{(2,3)}+\mathcal{S}_{S^b,S^a}^{(2,3)}\bigg)[\tilde{\varphi}](\tilde{x}).
\end{align*}

\end{lem}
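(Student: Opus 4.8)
The plan is to obtain the expansion \eqref{singexp1} by directly splitting the defining integral of $\mathcal{S}_{D_\delta}^0$ over $\partial D_\delta = S_\delta^a \cup \overline{S_\delta^f} \cup S_\delta^b$ into pieces indexed by the source surface and the target surface, and then to change variables on each piece so that everything is written on the fixed geometry $\partial D$ (scaling the caps by $\delta^{-1}$ and using the projections $\Bz_x,\Bz_y$ on $\Gamma_0$ for points on $S_\delta^f$). First I would record the elementary parametrization identities: for $\By\in S_\delta^f$, $\By = \Bz_y \pm \delta\Bn$, so $|\Bx-\By|^2 = |\Bz_x-\Bz_y|^2 + \delta^2|\cdot|$-type corrections when $\Bx\in S_\delta^f$ as well, while the surface element on $S_\delta^f$ is, up to $\mathcal{O}(\delta)$, that of $\Gamma_0$, and the surface element on the caps scales as $\delta\, d\tilde\sigma$. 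For $\Bx$ and $\By$ both on the flat part but far apart (distance $\gtrsim\delta^\gamma$), $\ln|\Bx-\By| = \ln|\Bz_x-\Bz_y| + \mathcal{O}(\delta^{2-2\gamma})$ by Taylor expanding the logarithm, which produces the leading operators $\mathcal{S}_{S^f\setminus\iota^{(k)}_{1,\delta^\gamma}(\tilde x),\Gamma_j}$; when they are close (distance $<\delta^\gamma$) the expansion of the logarithm is not valid and one must keep the kernel $\ln|\delta(\tilde x-\tilde y)+(1-\delta)(z_{\tilde x}-z_{\tilde y})|$ intact, which is exactly the operator $\mathcal{S}^{(0,1)}_{\cdot,\cdot}$ appearing in $\mathcal{S}_{D,0}$. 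The cap–cap interactions ($S_\delta^a\leftrightarrow S_\delta^b$) are smooth since the caps are separated by a fixed distance $\sim L$, so they contribute starting at order $\delta$ via a Taylor expansion of $\ln|\Bx-\By|$ around the centers $P,Q$; this gives the $\mathcal{S}^{(2,3)}$ terms in $\mathcal{S}_{D,2}$ and part of $\mathcal{S}_{D,1}$.

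Next I would treat the cap–flat interactions. For $\Bx$ on a cap, say $S_\delta^a$, and $\By\in S_\delta^f$ away from $P$, write $\Bx = P + \delta(\tilde\Bx - P_{\text{scaled}})$ and expand $\ln|\Bx-\By| = \ln|\By - P| + \delta\,\frac{(\By-P,\,\tilde{x}-z_{\tilde x})}{|\By-P|^2}\cdot(\text{sign}) + \delta^2(\cdots) + \mathcal{O}(\delta^3)$, treating $|\By-P|\approx|\Bz_y - P|$ up to $\mathcal{O}(\delta)$ corrections which get absorbed in the next order. The zeroth-order term, after putting the cap surface element $\delta\,d\tilde\sigma$ in front and dividing by the overall scaling, yields $\mathcal{S}_{S^f\setminus\iota_{1,\delta^\gamma}(P),S^a}$ at leading order; the first-order term yields $\mathcal{S}^{(1,0)}_{S^f\setminus\iota_{1,\delta^\gamma}(P),S^a}$ at order $\delta$ in $\mathcal{S}_{D,1}$; the second-order term yields $\mathcal{S}^{(2,0)}$ in $\mathcal{S}_{D,2}$. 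The reverse interaction, $\Bx\in S_\delta^f$ and $\By$ on a cap, is where the $\delta\ln\delta$ term is born: the cap surface element contributes a factor $\delta$, and integrating $\ln|\Bx-\By|$ over a half-disk of radius $\delta$ around $P$ produces $\delta\bigl(\ln\delta\cdot\tfrac{1}{2\pi}\int_{S^a}\tilde\varphi + \text{regular}\bigr)$, i.e. precisely $(\delta\ln\delta)\,\Pi^{(0)}_{S^a,S^a}$ plus lower-order pieces captured by $\mathcal{S}^{(0,2)}$ and by the $\mathcal{S}_{S^a,S^f\setminus\iota_{1,\delta^\gamma}(P)}$ and $\mathcal{S}^{(2,2)}$ operators; here one must again separate the region $\iota_{1,\delta^\gamma}(P)$ where $\Bx$ is close to the cap from the region where it is far, because only in the far region can one replace $\ln|\Bx-\By|$ by $\ln|\Bz_x - P|$.

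The main obstacle, and the place that demands genuine care rather than bookkeeping, is the near-diagonal analysis on the flat part and near the junction points $P,Q$: the $C^{1,\alpha}$-but-not-$C^2$ nature of $\partial D_\delta$ at the cap–rod junctions means the naive Taylor expansion of the logarithmic kernel fails in the transition zones $\iota_{1,\delta^\gamma}$, so one has to introduce the cutoff scale $\delta^\gamma$ with $\gamma\in(0,1)$ (as already set up in \eqref{eq:readdnew01}), estimate the contribution of each transition zone, and verify that all the error terms generated — both from the omitted higher Taylor terms in the far zone and from the crude bounds in the near zone — are genuinely $\mathcal{O}(\delta^3)$ uniformly, using the mapping properties of the single layer potential and the boundedness of the auxiliary operators in \eqref{laypot02} on $L^2$. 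A secondary technical point is to confirm that the change of variables $\varphi\mapsto\tilde\varphi$ is compatible with the $\mathcal{H}^*(\partial D_\delta)$ versus $\mathcal{H}^*(\partial D)$ norms up to the claimed orders, so that the expansion is meaningful as an operator identity; this is routine given the explicit Jacobians but needs to be stated. Once these uniform error estimates are in hand, collecting terms by their power of $\delta$ (and the single $\delta\ln\delta$ term) gives exactly $\mathcal{S}_{D,0},\mathcal{S}_{D,\ln},\mathcal{S}_{D,1},\mathcal{S}_{D,2}$ as displayed, completing the proof.
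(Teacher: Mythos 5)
Your overall strategy — split $\mathcal{S}_{D_\delta}^0$ according to source and target pieces of $\partial D_\delta=S_\delta^a\cup\overline{S_\delta^f}\cup S_\delta^b$, rescale to $\partial D$, Taylor-expand the logarithmic kernel in the far zones and keep it intact in the $\delta^\gamma$-transition zones — is exactly the paper's case-by-case argument, and most of your bookkeeping (flat--flat same side exact, flat--flat opposite sides giving $\mathcal{S}^{(2,1)}$, cap target with flat source giving $\mathcal{S}$, $\mathcal{S}^{(1,0)}$, $\mathcal{S}^{(2,0)}$, distinct caps giving $\mathcal{S}_{S^a,S^b}$ and $\mathcal{S}^{(2,3)}$) matches the paper's Cases 1--4.

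However, there is a genuine gap in your treatment of the $\delta\ln\delta$ term. You attribute it to the interaction with $\Bx\in S_\delta^f$ and $\By$ on a cap, claiming that integrating $\ln|\Bx-\By|$ over the half-disk of radius $\delta$ produces a factor $\ln\delta$. This is false when $\Bx$ is at distance $\gtrsim\delta^\gamma\gg\delta$ from the cap: there $\ln|\Bx-\By|=\ln|\Bz_x-P|+\mathcal{O}(\delta^{1-\gamma})$ carries no $\ln\delta$, and the cap measure only supplies the factor $\delta$, so this contribution sits at order $\delta$ — it is precisely the term $\mathcal{S}_{S^a,S^f\setminus\iota_{1,\delta^\gamma}(P)}$ that you already (correctly) place in $\mathcal{S}_{D,1}$, so your attribution double-counts it; when $\Bx$ lies in the transition zone $\iota_{1,\delta^\gamma}(P)$ the kernel is kept intact as $\mathcal{S}^{(0,2)}_{S^a,\iota_{1,\delta^\gamma}(P)}$ inside $\mathcal{S}_{D,0}$. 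The true source of the $\delta\ln\delta$ term, which your decomposition omits entirely, is the \emph{same-cap self-interaction}: for $\Bx,\By\in S_\delta^a$ one has exactly $|\Bx-\By|=\delta|\tilde{\Bx}-\tilde{\By}|$ and $d\sigma_{\By}=\delta\,d\tilde\sigma_{\tilde{\By}}$, so
\begin{equation*}
\frac{1}{2\pi}\chi(S_\delta^a)\int_{S_\delta^a}\ln|\Bx-\By|\,\varphi(\By)\,d\sigma(\By)
=\delta\ln\delta\,\Pi^{(0)}_{S^a,S^a}[\tilde\varphi](\tilde{\Bx})+\delta\,\mathcal{S}_{S^a,S^a}[\tilde\varphi](\tilde{\Bx}),
\end{equation*}
which is consistent with the fact that $\mathcal{S}_{D,\ln}=\Pi^{(0)}_{S^a,S^a}+\Pi^{(0)}_{S^b,S^b}$ carries the characteristic functions $\chi(S^a)$ and $\chi(S^b)$, i.e.\ is supported on the caps, not on $S^f$ as your mechanism would force. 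Without the same-cap (and same-$S^b$) self-interaction in the decomposition, your expansion cannot reproduce $\mathcal{S}_{D,\ln}$ (nor the cap--cap part of $\mathcal{S}_{D,1}$ coming from these pieces), so this step of the proposal needs to be corrected before the rest of the collection-of-terms argument goes through.
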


\begin{proof}
Owing to the definition formula of single layer potential operator (\ref{eq:layperpt1}) and the Taylor's expansion of $\ln(1+x)$, we have that

\emph{\textbf{Case 1:}} For $y\in\Gamma_{1,\delta}$. Since $\partial D_\delta=S_\delta^a\cup\Gamma_{1,\delta}\cup\Gamma_{2,\delta}\cup S_\delta^b$, it implies
\begin{align*}
&\frac{1}{2\pi}\chi(S_\delta^a)\int_{\Gamma_{1,\delta}\setminus\iota_{\delta^{\gamma}}(P)} \ln|x-y|\varphi(y) d\Gs_{y}\\
=&\frac{1}{4\pi}\chi(S^a)\int_{\Gamma_{1}\setminus\iota_{1,\delta^{\gamma}}(P)} \ln\left(|P-z_{\tilde{y}}|^2\left(1+\frac{2\delta(P-z_{\tilde{y}},\tilde{x}-P)
+2\delta^2(1+(\tilde{x}-P,z_{\tilde{y}}-\tilde{y}))}{|P-z_{\tilde{y}}|^2}\right)\right)\tilde{\varphi}(\tilde{y}) d\tilde{\Gs}_{\tilde{y}}\\
=&\frac{1}{2\pi}\chi(S^a)\int_{\Gamma_{1}\setminus\iota_{1,\delta^{\gamma}}(P)} \ln|P-z_{\tilde{y}}|\tilde{\varphi}(\tilde{y}) d\tilde{\Gs}_{\tilde{y}}\\
&\ \ \ \ \ \ \ \ \ \ \ \ \ +\frac{1}{4\pi}\chi(S^a)\int_{\Gamma_{1}\setminus\iota_{1,\delta^{\gamma}}(P)} \ln\left(1+\frac{2\delta(P-z_{\tilde{y}},\tilde{x}-P)
+2\delta^2(1+(\tilde{x}-P,z_{\tilde{y}}-\tilde{y}))}{|P-z_{\tilde{y}}|^2}\right)\tilde{\varphi}(\tilde{y}) d\tilde{\Gs}_{\tilde{y}}\\
=&\left(\mathcal{S}_{\Gamma_{1}\setminus\iota_{1,\delta^\gamma}(P),S^a}+\delta\mathcal{S}_{\Gamma_{1}\setminus\iota_{1,\delta^\gamma}(P), S^a}^{(1,0)}+\delta^2\mathcal{S}_{\Gamma_{1}\setminus\iota_{1,\delta^\gamma}(P),S^a}^{(2,0)}\right)
[\tilde{\varphi}](\tilde{x})+\mathcal{O}(\delta^3).
\end{align*}
Similarly,
\begin{align*}
&\frac{1}{2\pi}\chi(S_\delta^b)\int_{\Gamma_{1,\delta}\setminus\iota_{\delta^{\gamma}}(Q)} \ln|x-y|\varphi(y) d\Gs_{y}\\
=&\left(\mathcal{S}_{\Gamma_{1}\setminus\iota_{1,\delta^\gamma}(Q),S^b}+\delta\mathcal{S}_{\Gamma_{1}\setminus\iota_{1,\delta^\gamma}(Q), S^b}^{(1,0)}+\delta^2\mathcal{S}_{\Gamma_{1}\setminus\iota_{1,\delta^\gamma}(Q),S^b}^{(2,0)}\right)
[\tilde{\varphi}](\tilde{x})+\mathcal{O}(\delta^3).
\end{align*}
Clearly,
\begin{align*}
\frac{1}{2\pi}\chi(\Gamma_{1,\delta})\int_{\Gamma_{1,\delta}} \ln|x-y|\varphi(y) d\Gs_{y}
=\frac{1}{2\pi}\chi(\Gamma_{1})\int_{\Gamma_{1}}\ln|z_{\tilde{x}}-z_{\tilde{y}}|\tilde{\varphi}(\tilde{y}) d\tilde{\Gs}_{\tilde{y}}.
\end{align*}
Furthermore,
\begin{align*}
&\frac{1}{2\pi}\chi(\Gamma_{2,\delta})\int_{\Gamma_{1,\delta}\setminus\iota_{\delta^{\gamma}}(x)} \ln|x-y|\varphi(y) d\Gs_{y}\\
=&\frac{1}{4\pi}\chi(\Gamma_2)\int_{\Gamma_{1}\setminus\iota_{1,\delta^{\gamma}}(\tilde{x})} \ln\left(|z_{\tilde{x}}-z_{\tilde{y}}|^2\left(1+\frac{4\delta^2}{|z_{\tilde{x}}-z_{\tilde{y}}|^2}\right)\right)\tilde{\varphi}(\tilde{y}) d\tilde{\Gs}_{\tilde{y}},\\
=&\left(\mathcal{S}_{\Gamma_{1}\setminus\iota_{1,\delta^\gamma}^{(1)}(\tilde{x}),\Gamma_2}
+\delta^2\mathcal{S}_{\Gamma_{1}\setminus\iota_{1,\delta^\gamma}(\tilde{x}),\Gamma_2}^{(2,1)}\right)
[\tilde{\varphi}](\tilde{x})+\mathcal{O}(\delta^4).
\end{align*}

\emph{\textbf{Case 2:}} For $y\in\Gamma_{2,\delta}$. Noticing the symmetry of geometry of $\partial D$, we only need to exchange $\Gamma_{2,\delta}$ with $\Gamma_{1,\delta}$ in \emph{Case 1} and get the corresponding expansion formula. That is,
\begin{align*}
&\frac{1}{2\pi}\chi(S_\delta^a)\int_{\Gamma_{2,\delta}\setminus\iota_{\delta^{\gamma}}(P)} \ln|x-y|\varphi(y) d\Gs_{y}\\
=&\left(\mathcal{S}_{\Gamma_{2}\setminus\iota_{1,\delta^\gamma}(P),S^a}+\delta\mathcal{S}_{\Gamma_{2}\setminus\iota_{1,\delta^\gamma}(P), S^a}^{(1,0)}+\delta^2\mathcal{S}_{\Gamma_{2}\setminus\iota_{1,\delta^\gamma}(P),S^a}^{(2,0)}\right)
[\tilde{\varphi}](\tilde{x})+\mathcal{O}(\delta^3),
\end{align*}
\begin{align*}
&\frac{1}{2\pi}\chi(S_\delta^b)\int_{\Gamma_{2,\delta}\setminus\iota_{\delta^{\gamma}}(Q)} \ln|x-y|\varphi(y) d\Gs_{y}\\
=&\left(\mathcal{S}_{\Gamma_{2}\setminus\iota_{1,\delta^\gamma}(Q),S^b}+\delta\mathcal{S}_{\Gamma_{2}\setminus\iota_{1,\delta^\gamma}(Q), S^b}^{(1,0)}+\delta^2\mathcal{S}_{\Gamma_{2}\setminus\iota_{1,\delta^\gamma}(Q),S^b}^{(2,0)}\right)
[\tilde{\varphi}](\tilde{x})+\mathcal{O}(\delta^3),
\end{align*}
\begin{align*}
\frac{1}{2\pi}\chi(\Gamma_{2,\delta})\int_{\Gamma_{2,\delta}} \ln|x-y|\varphi(y) d\Gs_{y}
=\frac{1}{2\pi}\chi(\Gamma_{2})\int_{\Gamma_{2}}\ln|z_{\tilde{x}}-z_{\tilde{y}}|\tilde{\varphi}(\tilde{y}) d\tilde{\Gs}_{\tilde{y}},
\end{align*}
and
\begin{align*}
\frac{1}{2\pi}\chi(\Gamma_{1,\delta})\int_{\Gamma_{2,\delta}\setminus\iota_{\delta^{\gamma}}(x)} \ln|x-y|\varphi(y) d\Gs_{y}
=\left(\mathcal{S}_{\Gamma_{2}\setminus\iota_{1,\delta^\gamma}^{(1)}(\tilde{x}),\Gamma_1}
+\delta^2\mathcal{S}_{\Gamma_{2}\setminus\iota_{1,\delta^\gamma}(\tilde{x}),\Gamma_1}^{(2,1)}\right)
[\tilde{\varphi}](\tilde{x})+\mathcal{O}(\delta^3).
\end{align*}

\emph{\textbf{Case 3:}} For $y\in S_\delta^a$.
\begin{align*}
&\frac{1}{2\pi}\chi(\Gamma_{1,\delta}\setminus\iota_{\delta^{\gamma}}(P))\int_{S_\delta^a} \ln|x-y|\varphi(y) d\Gs_{y}\\
=&\frac{1}{4\pi}\chi(\Gamma_{1}\setminus\iota_{1,\delta^{\gamma}}(P))\int_{S^a} \ln\left(|P-z_{\tilde{x}}|^2\left(1+\frac{2\delta(P-z_{\tilde{x}},\tilde{y}-P)
+2\delta^2(1+(\tilde{y}-P,z_{\tilde{x}}-\tilde{x}))}{|P-z_{\tilde{x}}|^2}\right)\right)\tilde{\varphi}(\tilde{y}) \delta d\tilde{\Gs}_{\tilde{y}}\\
=&\frac{1}{2\pi}\chi(\Gamma_{1}\setminus\iota_{1,\delta^{\gamma}}(P))\delta\int_{S^a} \ln|P-z_{\tilde{x}}|\tilde{\varphi}(\tilde{y}) d\tilde{\Gs}_{\tilde{y}}\\
&\ \ \ \ \ \ \ \ \ \ \ \ \ +\frac{1}{4\pi}\chi(\Gamma_{1}\setminus\iota_{1,\delta^{\gamma}}(P))\delta\int_{S^a} \ln\left(1+\frac{2\delta(P-z_{\tilde{x}},\tilde{y}-P)
+2\delta^2(1+(\tilde{x}-P,z_{\tilde{x}}-\tilde{x}))}{|P-z_{\tilde{x}}|^2}\right)\tilde{\varphi}(\tilde{y}) d\tilde{\Gs}_{\tilde{y}}\\
=&\left(\delta\mathcal{S}_{S^a,\Gamma_{1}\setminus\iota_{1,\delta^\gamma}(P)}+\delta^2\mathcal{S}_{S^a,\Gamma_{1}\setminus\iota_{1,\delta^\gamma}(P)}^{(2,2)}\right)
[\tilde{\varphi}](\tilde{x})+\mathcal{O}(\delta^3).
\end{align*}
By exchanging $\Gamma_{2,\delta}$ with $\Gamma_{1,\delta}$, we have
\begin{align*}
\frac{1}{2\pi}\chi(\Gamma_{2,\delta}\setminus\iota_{\delta^{\gamma}}(P))\int_{S_\delta^a} \ln|x-y|\varphi(y) d\Gs_{y}
=\left(\delta\mathcal{S}_{S^a,\Gamma_{2}\setminus\iota_{1,\delta^\gamma}(P)}
+\delta^2\mathcal{S}_{S^a,\Gamma_{2}\setminus\iota_{1,\delta^\gamma}(P)}^{(2,2)}\right)
[\tilde{\varphi}](\tilde{x})+\mathcal{O}(\delta^3).
\end{align*}
Moreover, we also obtain
\begin{align*}
\frac{1}{2\pi}\chi(S_\delta^b)\int_{S_\delta^a} \ln|x-y|\varphi(y) d\Gs_{y}
=\left(\delta\mathcal{S}_{S^a,S^b}+\delta^2\mathcal{S}_{S^a,S^b}^{(2,3)}\right)
[\tilde{\varphi}](\tilde{x})+\mathcal{O}(\delta^3),
\end{align*}
and
\begin{align*}
\frac{1}{2\pi}\chi(S_\delta^a)\int_{S_\delta^a} \ln|x-y|\varphi(y) d\Gs_{y}
=\left(\delta\ln\delta\Pi_{S^a,S^a}^{(0)}+\delta\mathcal{S}_{S^a,S^a}\right)
[\tilde{\varphi}](\tilde{x}).
\end{align*}

\emph{\textbf{Case 4:}} For $y\in S_\delta^b$. Similar to \emph{Case 3}, it follows that
\begin{align*}
\frac{1}{2\pi}\chi(\Gamma_{1,\delta}\setminus\iota_{\delta^{\gamma}}(Q))\int_{S_\delta^b} \ln|x-y|\varphi(y) d\Gs_{y}
=\left(\delta\mathcal{S}_{S^b,\Gamma_{1}\setminus\iota_{1,\delta^\gamma}(Q)}
+\delta^2\mathcal{S}_{S^b,\Gamma_{1}\setminus\iota_{1,\delta^\gamma}(Q)}^{(2,2)}\right)
[\tilde{\varphi}](\tilde{x})+\mathcal{O}(\delta^3),
\end{align*}
\begin{align*}
\frac{1}{2\pi}\chi(\Gamma_{2,\delta}\setminus\iota_{\delta^{\gamma}}(Q))\int_{S_\delta^b} \ln|x-y|\varphi(y) d\Gs_{y}
=\left(\delta\mathcal{S}_{S^b,\Gamma_{2}\setminus\iota_{1,\delta^\gamma}(Q)}
+\delta^2\mathcal{S}_{S^b,\Gamma_{2}\setminus\iota_{1,\delta^\gamma}(Q)}^{(2,2)}\right)
[\tilde{\varphi}](\tilde{x})+\mathcal{O}(\delta^3),
\end{align*}
\begin{align*}
\frac{1}{2\pi}\chi(S_\delta^a)\int_{S_\delta^b} \ln|x-y|\varphi(y) d\Gs_{y}
=\left(\delta\mathcal{S}_{S^b,S^a}+\delta^2\mathcal{S}_{S^b,S^a}^{(2,3)}\right)
[\tilde{\varphi}](\tilde{x})+\mathcal{O}(\delta^3),
\end{align*}
and
\begin{align*}
\frac{1}{2\pi}\chi(S_\delta^b)\int_{S_\delta^b} \ln|x-y|\varphi(y) d\Gs_{y}
=\left(\delta\ln\delta\Pi_{S^b,S^b}^{(0)}+\delta\mathcal{S}_{S^b,S^b}\right)
[\tilde{\varphi}](\tilde{x}).
\end{align*}

Next, we deal with the following terms:
\begin{align*}
\left\|\mathcal{S}_{\iota_{\delta^\gamma}^{(1)}(x), S_\delta^a}^{(0,1)}\right\|
_{\mathcal{L}\left(H^{-\frac{1}{2}}(\iota_{\delta^\gamma}^{(1)}(P)),H^{\frac{1}{2}}(S_\delta^a)\right)}
=&\sup_{\varphi\neq0}\frac{\frac{1}{2\pi}
\left\|\int_{\iota_{\delta^\gamma}^{(1)}(x)}\ln|x-y|\varphi(y)d\sigma_y\right\|
_{H^{\frac{1}{2}}(S_\delta^a)}}
{\|\varphi\|_{H^{-\frac{1}{2}}(\iota_{\delta^\gamma}^{(1)}(P))}}\\
\geq&\sup_{\varphi\neq0}\frac{\frac{1}{2\pi}
\left\|\int_{\iota_{\delta^\gamma}^{(1)}(x)}\ln|x-y|\varphi(y)d\sigma_y\right\|
_{L^{2}(S_\delta^a)}}
{\|\varphi\|_{L^{2}(\iota_{\delta^\gamma}^{(1)}(P))}}\\
\geq&\frac{\frac{1}{2\pi}
\left\|\int_{\iota_{\delta^\gamma}^{(1)}(x)}\ln|x-y|d\sigma_y\right\|
_{L^{2}(S_\delta^a)}}
{\|1\|_{L^{2}(\iota_{\delta^\gamma}^{(1)}(P))}}\\
=&\frac{1}{2\pi}\frac{1}{\sqrt{\delta^\gamma+\pi\delta}}
\int_{S_\delta^a}\left|\int_{\iota_{\delta^\gamma}^{(1)}(P)}
\ln|x-y|d\sigma_y\right|d\sigma_x.
\end{align*}
Then, by using the polar coordinate transformation and mean-value theorem, it deduces
\begin{align*}
&\frac{1}{2\pi}\frac{1}{\sqrt{\delta^\gamma+\pi\delta}}
\int_{S_\delta^a}\left|\int_{\iota_{\delta^\gamma}^{(1)}(P)}
\ln|x-y|d\sigma_y\right|d\sigma_x\\
=&\frac{1}{2\pi}\frac{1}{\sqrt{\delta^\gamma+\pi\delta}}
\frac{1}{2\pi}\int_{\frac{\pi}{2}}^{\frac{3\pi}{2}}\left|\int_{-\frac{L}{2}}^{-\frac{L}{2}+\delta^\gamma}
\ln\sqrt{\left(-\frac{L}{2}+\delta\cos\theta-y_1\right)^2+\delta^2(1+\sin\theta)^2}d\sigma_y\right|d\theta\\
=&\frac{1}{8\pi}\frac{1}{\sqrt{\delta^\gamma+\pi\delta}}
\int_{-\frac{L}{2}}^{-\frac{L}{2}+\delta^\gamma}
\ln\left(\left(-\frac{L}{2}+\delta\cos\tilde{\theta}-y_1\right)^2+\delta^2(1+\sin\tilde{\theta})^2\right)d\sigma_y,
\ \ \tilde{\theta}\in\left(\frac{\pi}{2},\frac{3\pi}{2}\right).
\end{align*}
Thus, by using the integral formula of $\int\ln(a^2+x^2)dx$, we obtain
\begin{align*}
\left\|\mathcal{S}_{\iota_{\delta^\gamma}^{(1)}(x), S_\delta^a}^{(0,1)}\right\|
_{\mathcal{L}\left(H^{-\frac{1}{2}}(\iota_{\delta^\gamma}^{(1)}(P)),H^{\frac{1}{2}}(S_\delta^a)\right)}
\geq\mathcal{O}\left(\delta^{\frac{\gamma}{2}}\ln\delta\right).
\end{align*}
Similarly, we can estimate $\left\|\mathcal{S}_{\iota_{\delta^\gamma}^{(2)}(x), S_\delta^a}^{(0,1)}\right\|
_{\mathcal{L}\left(H^{-\frac{1}{2}}(\iota_{\delta^\gamma}^{(2)}(P)),H^{\frac{1}{2}}(S_\delta^a)\right)}$,
$\left\|\mathcal{S}_{\iota_{\delta^\gamma}^{(1)}(x), S_\delta^b}^{(0,1)}\right\|
_{\mathcal{L}\left(H^{-\frac{1}{2}}(\iota_{\delta^\gamma}^{(1)}(P)),H^{\frac{1}{2}}(S_\delta^b)\right)}$, and
$\left\|\mathcal{S}_{\iota_{\delta^\gamma}^{(2)}(x), S_\delta^b}^{(0,1)}\right\|
_{\mathcal{L}\left(H^{-\frac{1}{2}}(\iota_{\delta^\gamma}^{(2)}(P)),H^{\frac{1}{2}}(S_\delta^b)\right)}$ is also no less than $\mathcal{O}\left(\delta^{\frac{\gamma}{2}}\ln\delta\right)$.

Moreover,
\begin{align*}
\left\|\mathcal{S}_{S_\delta^a,\iota_{\delta^\gamma}^{(1)}(x)}^{(0,2)}\right\|
_{\mathcal{L}\left(H^{-\frac{1}{2}}(S_\delta^a),H^{\frac{1}{2}}(\iota_{\delta^\gamma}^{(1)}(P))\right)}
=&\sup_{\varphi\neq0}\frac{\frac{1}{2\pi}
\left\|\int_{S_\delta^a}\ln|x-y|\varphi(y)d\sigma_y\right\|
_{H^{\frac{1}{2}}(\iota_{\delta^\gamma}^{(1)}(P))}}
{\|\varphi\|_{H^{-\frac{1}{2}}(S_\delta^a)}}\\
\geq&\sup_{\varphi\neq0}\frac{\frac{1}{2\pi}
\left\|\int_{S_\delta^a}\ln|x-y|d\sigma_y\right\|
_{L^{2}(\iota_{\delta^\gamma}^{(1)}(P))}}
{\|1\|_{L^{2}(S_\delta^a)}}\\
=&\frac{1}{2\pi}\frac{1}{\sqrt{\pi\delta}}
\int_{\iota_{\delta^\gamma}^{(1)}(x)}\left|\int_{S_\delta^a}
\ln|x-y|d\sigma_y\right|d\sigma_x\\
\geq&\mathcal{O}\left(\delta^{\gamma-\frac{1}{2}}\ln\delta\right).
\end{align*}
Likewise, the terms: $\left\|\mathcal{S}_{S_\delta^a,\iota_{\delta^\gamma}^{(2)}(x)}^{(0,2)}\right\|
_{\mathcal{L}\left(H^{-\frac{1}{2}}(S_\delta^a),H^{\frac{1}{2}}(\iota_{\delta^\gamma}^{(2)}(P))\right)}$,
$\left\|\mathcal{S}_{S_\delta^b,\iota_{\delta^\gamma}^{(1)}(x)}^{(0,2)}\right\|
_{\mathcal{L}\left(H^{-\frac{1}{2}}(S_\delta^b),H^{\frac{1}{2}}(\iota_{\delta^\gamma}^{(1)}(P))\right)}$, and\\
$\left\|\mathcal{S}_{S_\delta^b,\iota_{\delta^\gamma}^{(2)}(x)}^{(0,2)}\right\|
_{\mathcal{L}\left(H^{-\frac{1}{2}}(S_\delta^b),H^{\frac{1}{2}}(\iota_{\delta^\gamma}^{(2)}(P))\right)}$
can be also estimated and no less than $\mathcal{O}\left(\delta^{\gamma-\frac{1}{2}}\ln\delta\right)$.

Finally,
\begin{align*}
\left\|\mathcal{S}_{\iota_{\delta^\gamma}^{(1)}(x),\Gamma_{2,\delta}}^{(0,1)}\right\|
_{\mathcal{L}\left(H^{-\frac{1}{2}}(\iota_{\delta^\gamma}^{(1)}(x)),H^{\frac{1}{2}}(\Gamma_{2,\delta})\right)}
=&\sup_{\varphi\neq0}\frac{\frac{1}{2\pi}
\left\|\int_{\iota_{\delta^\gamma}^{(1)}(x)}\ln|x-y|\varphi(y)d\sigma_y\right\|
_{H^{\frac{1}{2}}(\Gamma_{2,\delta})}}
{\|\varphi\|_{H^{-\frac{1}{2}}(\iota_{\delta^\gamma}^{(1)}(x))}}\\
\geq&\frac{\frac{1}{2\pi}
\left\|\int_{\iota_{\delta^\gamma}^{(1)}(x)}\ln|x-y|d\sigma_y\right\|
_{L^{2}(\Gamma_{2,\delta})}}
{\|1\|_{L^2(\iota_{\delta^\gamma}^{(1)}(x))}}\\
\geq&\mathcal{O}\left(\delta^{\frac{\gamma}{2}}\ln\delta\right).
\end{align*}
By using the same method, we can also find that
\begin{align*}
\left\|\mathcal{S}_{\iota_{\delta^\gamma}^{(2)}(\tilde{x}),\Gamma_{1,\delta}}^{(0,1)}\right\|
\geq\mathcal{O}\left(\delta^{\frac{\gamma}{2}}\ln\delta\right).
\end{align*}
Hence, (\ref{singexp1}) holds.
\end{proof}

Let the boundary integral operators $\mathcal{K}_{D_\delta,1}$, $\mathcal{S}_{D_\delta,1}$ and $\mathcal{S}_{D_\delta,2}$ be respectively defined by
\begin{equation}\label{laypot01}
\begin{split}
\mathcal{S}_{D_\delta,1}[\varphi](\Bx):&=-\frac{1}{8\pi}\int_{\p D_\delta} |\Bx-\By|^2 \varphi(\By) d\sigma(\By),\\
\mathcal{S}_{D_\delta,2}[\varphi](\Bx):&=-\frac{1}{8\pi}\int_{\p D_\delta} (\tau_k+\ln|\Bx-\By|)|\Bx-\By|^2 \varphi(\By) d\sigma(\By),\\
\mathcal{K}_{D_\delta,1}[\varphi](\Bx):&=-\frac{1}{8\pi}\int_{\p D_\delta} \frac{\partial|\Bx-\By|^2}{\partial\nu(\Bx)} \varphi(\By) d\sigma(\By).
\end{split}
\end{equation}
Furthermore, we introduce the following boundary integral operators:
\beq\label{laypot03}
\begin{split}
\mathcal{S}_{\Lambda_1,\Lambda_2,0}[\tilde{\varphi}](\tilde{\Bx}):&=-\frac{1}{8\pi}\chi(\Lambda_2)\int_{\Lambda_1} |\Bz_{\tilde{\Bx}}-\Bz_{\tilde{\By}}|^2\tilde{\varphi}(\tilde{\By}) d\tilde{\sigma}(\tilde{\By}),\\
\mathcal{S}_{\Lambda_1,\Lambda_2,1}[\tilde{\varphi}](\tilde{\Bx}):&=-\frac{1}{4\pi}\chi(\Lambda_2)\int_{\Lambda_1} (\Bz_{\tilde{\Bx}}-\Bz_{\tilde{\By}},\tilde{\Bx}-\Bz_{\tilde{\Bx}})\tilde{\varphi}(\tilde{\By}) d\tilde{\sigma}(\tilde{\By}),\\
\mathcal{K}_{\Lambda_1,\Lambda_2,0}[\tilde{\varphi}](\tilde{\Bx}):&=-\frac{1}{4\pi}\chi(\Lambda_2)\int_{\Lambda_1} (\Bz_{\tilde{\Bx}}-\Bz_{\tilde{\By}},\nu_{\tilde{\Bx}})\tilde{\varphi}(\tilde{\By}) d\tilde{\sigma}(\tilde{\By}),\\
\mathcal{K}_{\Lambda_1,\Lambda_2,1}[\tilde{\varphi}](\tilde{\Bx}):&=-\frac{1}{4\pi}\chi(\Lambda_2)\int_{\Lambda_1} \left(1+(\nu_{\tilde{\Bx}},\Bz_{\tilde{\By}}-\tilde{\By})\right)\tilde{\varphi}(\tilde{\By}) d\tilde{\sigma}(\tilde{\By}).
\end{split}
\eeq

\begin{lem}\label{le:app0101}
Let $\varphi\in\mathcal {H}^*(\partial D_\delta)$ and $\tilde{\varphi}(\tilde{\Bx})=\varphi(\Bx)$ for $x\in\partial D_\delta$ and $\tilde{\Bx}\in\partial D$. Then the following asymptotic results hold:
\begin{enumerate}
\item[(i)] \begin{equation}\label{sd1}
\mathcal{S}_{D_\delta,1}[\varphi](\Bx)=\mathcal{S}_{D,1,0}[\tilde{\varphi}](\tilde{\Bx})
+\delta\mathcal{S}_{D,1,1}[\tilde{\varphi}](\tilde{\Bx})
+\mathcal{O}(\delta^2),
\end{equation}

\item[(ii)]
\begin{equation}\label{kd1}
\begin{split}
&\mathcal{K}_{D_\delta,1}[\varphi](\Bx)=\mathcal{K}_{D,1,0}[\tilde{\varphi}](\tilde{\Bx})\\
+&\delta\left(\mathcal{K}_{D,1,1}[\tilde{\varphi}](\tilde{\Bx})+\chi(\Gamma_2)\frac{1}{2\pi}\int_{\Gamma_1} \tilde\varphi(\tilde{\By})d\tilde{\sigma}(\tilde{\By})+\chi(\Gamma_1)\frac{1}{2\pi}\int_{\Gamma_2} \tilde\varphi(\tilde{\By})d\tilde{\sigma}(\tilde{\By})\right)
+\mathcal{O}(\delta^2),
\end{split}
\end{equation}
where
\begin{align*}
&\mathcal{S}_{D,1,0}[\tilde{\varphi}](\tilde{\Bx})=\mathcal{S}_{S^f,\partial D,0}[\tilde{\varphi}](\tilde{\Bx}),\\
&\mathcal{S}_{D,1,1}[\tilde{\varphi}](\tilde{\Bx})=\bigg(\mathcal{S}_{S^a,\partial D\setminus S^a,0}+\mathcal{S}_{S^b,\partial D\setminus S^b,0}+\mathcal{S}_{S^f,S^c,1}\bigg)[\tilde{\varphi}](\tilde{\Bx}),\\
&\mathcal{K}_{D,1,0}[\tilde{\varphi}](\tilde{\Bx})=\mathcal{K}_{S^f,S^c,0}[\tilde{\varphi}](\tilde{\Bx}),\\
&\mathcal{K}_{D,1,1}[\tilde{\varphi}](\tilde{\Bx})=\bigg(\mathcal{K}_{S^a,S^b,0}+\mathcal{K}_{S^b,S^a,0}
+\mathcal{K}_{S^f,S^c,1}+\mathcal{S}_{S^c,S^f,1}\bigg)[\tilde{\varphi}](\tilde{\Bx}).
\end{align*}
\end{enumerate}
\end{lem}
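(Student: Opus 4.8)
The plan is to follow the same scheme as in the proof of Lemma~\ref{le:app0103}: decompose $\p D_\delta = S_\delta^a\cup\Gamma_{1,\delta}\cup\Gamma_{2,\delta}\cup S_\delta^b$, and for every ordered pair consisting of the piece containing the evaluation point $\Bx$ and the piece over which $\By$ is integrated, pass to the reference boundary $\p D$ via $\tilde\varphi(\tilde\Bx)=\varphi(\Bx)$ and expand the kernel in powers of $\delta$. The decisive simplification relative to Lemma~\ref{le:app0103} is that here the kernels $|\Bx-\By|^2$ and $\p_{\nu(\Bx)}|\Bx-\By|^2=2(\Bx-\By,\nu_x)$ are \emph{polynomial}, not merely smooth, in the transverse variable, so the expansions are exact and finite and there is no diagonal singularity: writing a flank point as $\By=\Bz_{\tilde\By}+(-1)^{j}\delta\,\Bn$ (so that $d\sigma(\By)=d\tilde\sigma(\tilde\By)$, because $\Gamma_0$ is straight) and a cap point as $\By=P+\delta(\tilde\By-P)$ (so that $d\sigma(\By)=\delta\,d\tilde\sigma(\tilde\By)$), one has $|\Bx-\By|^2=|\Bz_{\tilde\Bx}-\Bz_{\tilde\By}|^2+2\delta(\Bz_{\tilde\Bx}-\Bz_{\tilde\By},(\tilde\Bx-\Bz_{\tilde\Bx})-(\tilde\By-\Bz_{\tilde\By}))+\delta^2|(\tilde\Bx-\Bz_{\tilde\Bx})-(\tilde\By-\Bz_{\tilde\By})|^2$, and an analogous exact identity for $(\Bx-\By,\nu_x)$; this is why no cut-off regions $\iota_{1,\delta^\gamma}$ enter the statement.

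Then one sorts the contributions by the total power of $\delta$, tracking the two sources of $\delta$-factors: the measure factor $\delta$ present exactly when $\By$ lies on a cap, and the explicit powers of $\delta$ in the polynomial kernel. For (i): the $\mathcal{O}(1)$ part arises only from integrating over the flanks against the leading kernel $-\tfrac1{8\pi}|\Bz_{\tilde\Bx}-\Bz_{\tilde\By}|^2$, which is precisely $\mathcal{S}_{S^f,\p D,0}=\mathcal{S}_{D,1,0}$; the $\mathcal{O}(\delta)$ part collects, on one side, the flank integrals against the $2\delta$-term with $\Bx$ on a cap — where the piece proportional to $\tilde\By-\Bz_{\tilde\By}=\pm\Bn$ drops out since $\Bz_{\tilde\Bx}-\Bz_{\tilde\By}$ is tangent to $\Gamma_0$ hence orthogonal to $\Bn$, leaving $\mathcal{S}_{S^f,S^c,1}$ — and, on the other side, the cap integrals against the leading kernel, which carry the extra measure factor $\delta$ and give $\mathcal{S}_{S^a,\p D\setminus S^a,0}+\mathcal{S}_{S^b,\p D\setminus S^b,0}$ (the self-interaction of a cap enters only at $\mathcal{O}(\delta^3)$, since there the leading kernel vanishes and one is left with $\delta^2\times\delta$; this accounts for the restriction to $\p D\setminus S^a$). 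For (ii), applying the same procedure to $2(\Bx-\By,\nu_x)$ and using $\nu_x=\tilde\Bx-\Bz_{\tilde\Bx}$ on the caps and $\nu_x=\pm\Bn$ on the flanks: on a single flank $(\Bz_{\tilde\Bx}-\Bz_{\tilde\By},\Bn)=0$, so the sole $\mathcal{O}(1)$ contribution is flank-to-cap, namely $\mathcal{K}_{S^f,S^c,0}=\mathcal{K}_{D,1,0}$; at $\mathcal{O}(\delta)$ one gets $\mathcal{K}_{S^a,S^b,0}+\mathcal{K}_{S^b,S^a,0}$ from the cap measure factors, $\mathcal{K}_{S^f,S^c,1}$ from the $\delta$-term of $(\Bx-\By,\nu_x)$ with $\Bx$ on a cap (the companion operator $\mathcal{S}_{S^c,S^f,1}$ turning out to vanish identically, again by tangency of $\Bz_{\tilde\Bx}-\Bz_{\tilde\By}$ to $\Gamma_0$), and — the genuinely new effect — the opposite-flank contribution: for $\Bx,\By$ on the two different flanks the tangential component of $\Bx-\By$ is annihilated by $\nu_x=\pm\Bn$, so $(\Bx-\By,\nu_x)$ is constant of size $2\delta$, and this is the source of the boundary-average operators $\chi(\Gamma_2)\tfrac1{2\pi}\!\int_{\Gamma_1}\tilde\varphi\,d\tilde\sigma(\tilde\By)+\chi(\Gamma_1)\tfrac1{2\pi}\!\int_{\Gamma_2}\tilde\varphi\,d\tilde\sigma(\tilde\By)$ in (ii).

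Finally, all $\mathcal{O}(\delta^2)$ remainders are gathered from the $\delta^2$-coefficient of $|\Bx-\By|^2$ on the flanks, from every term in which a cap measure factor multiplies a $\delta$-order kernel term, and from the $\delta^3$ cap self-interactions; each of these is a bounded operator on $\mathcal{H}^*(\p D)$, the kernels being polynomials with coefficients uniformly bounded in $\delta$ on the relevant pieces (recall $\p D$ is $C^{1,\alpha}$), so the error is $\mathcal{O}(\delta^2)$ in operator norm. I expect the main obstacle to be not any individual estimate but the combinatorial bookkeeping: for each of the sixteen piece-to-piece interactions one must correctly pair the cap measure factor with the order in $\delta$ of the surviving term of the polynomial kernel, and then recognise the resulting expression among the operators of \eqref{laypot03}; the conceptual crux is the two exact consequences of the straightness of $\Gamma_0$ — $(\Bz_{\tilde\Bx}-\Bz_{\tilde\By},\Bn)=0$, which kills the would-be extra same-flank terms and forces $(\Bx-\By,\nu_x)$ to be constant of size $2\delta$ on opposite flanks — so that the only $\mathcal{O}(\delta)$ surprise in (ii) is the pair of averaging operators.
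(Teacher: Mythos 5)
Your proposal is correct and follows essentially the same route as the paper: the paper proves (i) by exactly this piecewise flank/cap decomposition, the exact polynomial expansion of $|\Bx-\By|^2$ in powers of $\delta$, the cap measure factor $\delta\,d\tilde{\sigma}$, and the straightness-induced orthogonality $(\Bz_{\tilde{\Bx}}-\Bz_{\tilde{\By}},\Bn)=0$, and it disposes of (ii) as ``the same method applied to $(\Bx-\By,\nu_x)$'', which is what you carry out in more detail (including the correct identification of the opposite-flank term $(\Bx-\By,\nu_x)=2\delta$ as the origin of the two averaging operators and the observation that $\mathcal{S}_{S^c,S^f,1}$ vanishes for the straight rod). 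One minor point to verify when writing it up: with the prefactor $-\tfrac{1}{8\pi}$ in \eqref{laypot01} and $(\Bx-\By,\nu_x)=2\delta$ on opposite flanks, those averaging terms come out with coefficient $-\tfrac{\delta}{2\pi}$, so the sign with which they appear in \eqref{kd1} deserves a second look, though this concerns the printed statement rather than your method.
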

\begin{proof}
\item[(i)]
From the definition of $\mathcal{S}_{D_\delta,1}$, we have

\emph{\textbf{Case 1:}} For $\By\in\Gamma_{1,\delta}$, it follows that
\begin{align*}
|\Bx-\By|^2&=\left(P-\Bz_{\tilde{\By}}+\delta(\tilde{\Bx}-P)+\delta(\Bz_{\tilde{\By}}-\tilde{\By}),
P-\Bz_{\tilde{\By}}+\delta(\tilde{\Bx}-P)+\delta(\Bz_{\tilde{\By}}-\tilde{\By})\right)\\
&=|P-\Bz_{\tilde{\By}}|^2+2\delta(P-\Bz_{\tilde{y}},\tilde{\Bx}-P)+2\delta^2
(1+(\tilde{\Bx}-P,\Bz_{\tilde{\By}}-\tilde{\By})).
\end{align*}
Since $\partial D_\delta=S_\delta^a\cup\Gamma_{1,\delta}\cup\Gamma_{2,\delta}\cup S_\delta^b$, it implies
\begin{align*}
&-\frac{1}{8\pi}\chi(S_\delta^a)\int_{\Gamma_{1,\delta}} |\Bx-\By|^2\varphi(y) d\sigma(\By)\\
=&-\frac{1}{8\pi}\chi(S^a)\int_{\Gamma_{1}}|P-\Bz_{\tilde{y}}|^2\tilde{\varphi}(\tilde{\By}) d\tilde{\sigma}(\tilde{\By})-\frac{1}{4\pi}\delta\chi(S^a)\int_{\Gamma_{1}} (P-\Bz_{\tilde{\By}},\tilde{\Bx}-P)\tilde{\varphi}(\tilde{\By}) d\tilde{\sigma}(\tilde{\By})+\mathcal{O}(\delta^2).
\end{align*}
Similarly,
\begin{align*}
&-\frac{1}{8\pi}\chi(S_\delta^b)\int_{\Gamma_{1,\delta}} |\Bx-\By|^2\varphi(\By) d\sigma(\By)\\
=&-\frac{1}{8\pi}\chi(S^b)\int_{\Gamma_{1}}|Q-\Bz_{\tilde{\By}}|^2\tilde{\varphi}(\tilde{\By}) d\tilde{\sigma}(\tilde{\By})-\frac{1}{4\pi}\delta\chi(S^b)\int_{\Gamma_{1}} (Q-\Bz_{\tilde{\By}},\tilde{\Bx}-Q)\tilde{\varphi}(\tilde{\By}) d\tilde{\sigma}(\tilde{\By})+\mathcal{O}(\delta^2).
\end{align*}
Clearly,
\begin{align*}
-\frac{1}{8\pi}\chi(\Gamma_{1,\delta})\int_{\Gamma_{1,\delta}}|\Bx-\By|^2\varphi(\By) d\sigma(\By)
=-\frac{1}{8\pi}\chi(\Gamma_{1})\int_{\Gamma_{1}}|\Bz_{\tilde{\Bx}}-\Bz_{\tilde{\By}}|^2\tilde{\varphi}(\tilde{\By}) d\tilde{\sigma}(\tilde{\By}).
\end{align*}
Furthermore,
\begin{align*}
-\frac{1}{8\pi}\chi(\Gamma_{2,\delta})\int_{\Gamma_{1,\delta}\setminus\iota_{\delta^{\gamma}}(\Bx)} |\Bx-\By|^2\varphi(y) d\sigma(\By)
=-\frac{1}{8\pi}\chi(\Gamma_2)\int_{\Gamma_{1}} |\Bz_{\tilde{\Bx}}-\Bz_{\tilde{\By}}|^2\tilde{\varphi}(\tilde{\By}) d\tilde{\sigma}(\tilde{\By})+\mathcal{O}(\delta^2).
\end{align*}

\emph{\textbf{Case 2:}} For $\By\in\Gamma_{2,\delta}$. Notice the symmetry of geometry of $\partial D_\delta$, we only need to exchange $\Gamma_{2,\delta}$ with $\Gamma_{1,\delta}$ in \emph{Case 1} and get the corresponding expand formula. Namely,
\begin{align*}
&-\frac{1}{8\pi}\chi(S_\delta^a)\int_{\Gamma_{2,\delta}} |\Bx-\By|^2\varphi(y) d\sigma(\By)\\
=&-\frac{1}{8\pi}\chi(S^a)\int_{\Gamma_{2}}|P-\Bz_{\tilde{\By}}|^2\tilde{\varphi}(\tilde{\By}) d\tilde{\sigma}(\tilde{\By})-\frac{1}{4\pi}\delta\chi(S^a)\int_{\Gamma_{2}} (P-\Bz_{\tilde{\By}},\tilde{\Bx}-P)\tilde{\varphi}(\tilde{\By}) d\tilde{\sigma}(\tilde{\By})+\mathcal{O}(\delta^2),
\end{align*}
\begin{align*}
&-\frac{1}{8\pi}\chi(S_\delta^b)\int_{\Gamma_{2,\delta}} |\Bx-\By|^2\varphi(\By) d\sigma(\By)\\
=&-\frac{1}{8\pi}\chi(S^b)\int_{\Gamma_{2}}|Q-\Bz_{\tilde{y}}|^2\tilde{\varphi}(\tilde{\By}) d\tilde{\sigma}(\tilde{\By})-\frac{1}{4\pi}\delta\chi(S^b)\int_{\Gamma_{2}} (Q-\Bz_{\tilde{\By}},\tilde{\Bx}-Q)\tilde{\varphi}(\tilde{\By}) d\tilde{\sigma}(\tilde{\By})+\mathcal{O}(\delta^2),
\end{align*}
\begin{align*}
&-\frac{1}{8\pi}\chi(\Gamma_{2,\delta})\int_{\Gamma_{2,\delta}}|\Bx-\By|^2\varphi(\By) d\sigma(\By)
=-\frac{1}{8\pi}\chi(\Gamma_{2})\int_{\Gamma_{2}}|\Bz_{\tilde{\Bx}}-\Bz_{\tilde{\By}}|^2\tilde{\varphi}(\tilde{\By}) d\tilde{\sigma}(\tilde{\By}),
\end{align*}
and
\begin{align*}
-\frac{1}{8\pi}\chi(\Gamma_{1,\delta})\int_{\Gamma_{2,\delta}} |\Bx-\By|^2\varphi(\By) d\sigma(\By)
=-\frac{1}{8\pi}\chi(\Gamma_1)\int_{\Gamma_{2}} |\Bz_{\tilde{\Bx}}-\Bz_{\tilde{\By}}|^2\tilde{\varphi}(\tilde{\By}) d\tilde{\sigma}(\tilde{\By})+\mathcal{O}(\delta^2).
\end{align*}

\emph{\textbf{Case 3:}} For $\By\in S_\delta^a$.
\begin{align*}
-\frac{1}{8\pi}\chi(\Gamma_{1,\delta})\int_{S_\delta^a} |\Bx-\By|^2\varphi(\By) d\sigma(\By)
=-\frac{1}{8\pi}\delta\chi(\Gamma_{1})\int_{S^a}|P-\Bz_{\tilde{\By}}|^2\tilde{\varphi}(\tilde{\By}) d\tilde{\sigma}(\tilde{\By})+\mathcal{O}(\delta^2).
\end{align*}
Similarly,
\begin{align*}
-\frac{1}{8\pi}\chi(\Gamma_{2,\delta})\int_{S_\delta^a} |\Bx-\By|^2\varphi(\By) d\sigma(\By)
=-\frac{1}{8\pi}\delta\chi(\Gamma_{2})\int_{S^a}|P-\Bz_{\tilde{\By}}|^2\tilde{\varphi}(\tilde{\By}) d\tilde{\sigma}(\tilde{\By})+\mathcal{O}(\delta^2).
\end{align*}
Clearly,
\begin{align*}
-\frac{1}{8\pi}\chi(S_\delta^b)\int_{S_\delta^a}|\Bx-\By|^2\varphi(\By) d\sigma(\By)
=-\frac{1}{8\pi}\delta\chi(S^b)\int_{S^a}|P-Q|^2\tilde{\varphi}(\tilde{\By}) d\tilde{\sigma}(\tilde{\By})+\mathcal{O}(\delta^2).
\end{align*}
Furthermore,
\begin{align*}
-\frac{1}{8\pi}\chi(S_\delta^a)\int_{S_\delta^a}|\Bx-\By|^2\varphi(\By) d\sigma(\By)
=\mathcal{O}(\delta^3).
\end{align*}

\emph{\textbf{Case 4:}} For $\By\in S_\delta^b$. Owing to the symmetry of geometry of $\partial D_\delta$, and then exchange $S_\delta^a$ with $S_\delta^b$ in \emph{Case 3}, we get
\begin{align*}
-\frac{1}{8\pi}\chi(\Gamma_{1,\delta})\int_{S_\delta^b} |\Bx-\By|^2\varphi(\By) d\sigma(\By)
=-\frac{1}{8\pi}\delta\chi(\Gamma_{1})\int_{S^b}|P-\Bz_{\tilde{\By}}|^2\tilde{\varphi}(\tilde{\By}) d\tilde{\sigma}(\tilde{\By})+\mathcal{O}(\delta^2),
\end{align*}
\begin{align*}
-\frac{1}{8\pi}\chi(\Gamma_{2,\delta})\int_{S_\delta^b} |\Bx-\By|^2\varphi(\By) d\sigma(\By)
=-\frac{1}{8\pi}\delta\chi(\Gamma_{2})\int_{S^b}|P-\Bz_{\tilde{\By}}|^2\tilde{\varphi}(\tilde{\By}) d\tilde{\sigma}(\tilde{\By})+\mathcal{O}(\delta^2),
\end{align*}
\begin{align*}
-\frac{1}{8\pi}\chi(S_\delta^a)\int_{S_\delta^b}|\Bx-\By|^2\varphi(\By) d\sigma(\By)
=-\frac{1}{8\pi}\delta\chi(S^b)\int_{S^a}|P-Q|^2\tilde{\varphi}(\tilde{\By}) d\tilde{\sigma}(\tilde{\By})+\mathcal{O}(\delta^2),
\end{align*}
and
\begin{align*}
-\frac{1}{8\pi}\chi(S_\delta^b)\int_{S_\delta^b}|\Bx-\By|^2\varphi(\By) d\sigma(\By)
=\mathcal{O}(\delta^3).
\end{align*}
Hence, by combining above four case, (\ref{sd1}) is proved.

\item[(ii)] By using the formula of $(\Bx-\By,\nu_x)$ in $\mathcal{K}_{D_\delta,1}$ and the same method in (i), (\ref{kd1}) can be proved.
\end{proof}

Finally, we give the asymptotic expansions of layer potential operators with respect to the frequency. Suppose $k\ll 1$, one can first find out that \cite{AMRZ14}
\beq\label{eq:asyHank01}
H_0^{(1)}(k|\Bx|)=c_k + \mathrm{i}\frac{2}{\pi}\ln|\Bx|-\frac{\mathrm{i}}{2\pi}(k^2\ln k)|\Bx|^2-\frac{\mathrm{i}}{2\pi}k^2|\Bx|^2(\ln|\Bx|+\tau_k)+\Ocal(k^4\ln k),
\eeq
where $c_k:=1+\mathrm{i}\frac{2}{\pi}\Big(\ln\frac{k}{2}+\gamma\Big)$, $\tau_k:=1+\ln2+\frac{\mathrm{i}\pi}{2}-\gamma$, and $\gamma=0.5772...$ is the Euler-Mascheroni constant.
Based on \eqnref{eq:asyHank01}, one can then easily find the asymptotic expansions for single layer potential operator $\Scal_B^k$ and Neumann-Poincar\'e operator $(\Kcal_B^k)^*$ for $k\ll 1$ as follows:
\begin{align}\label{eq:asymplayer01}
\Scal_B^k[\varphi]=&\Scal_B^0[\varphi] -\frac{\mathrm{i}}{4} c_k \la 1, \varphi\ra_{\p B}+(k^2\ln k) \mathcal{S}_{B,1}[\varphi]+k^2 \mathcal{S}_{B,2}[\varphi]+ k^4\ln k\Ocal(\|\varphi\|),\\
(\Kcal_B^k)^*[\varphi]=&(\Kcal_B^0)^*[\varphi]+(k^2\ln k) \mathcal{K}_{B,1}[\varphi]+k^2\Ocal(\|\varphi\|),   \quad \varphi\in\mathcal{H}^*(\p B),\label{eq:asymplayer02}
\end{align}
where $\mathcal{S}_{B,1}$, $\mathcal{S}_{B,2}$ and $\mathcal{K}_{B,1}$ are defined as (\ref{laypot01}).
\section{Quantitative analysis of the scattered field}

In this section, we focus on the quantitative analysis of the scattered field $u^s$. Define $k_c:=\omega\sqrt{\varepsilon_c}$. With the help of layer potential techniques, one has the following integral representation for the solution to \eqnref{eq:helm01}:
\begin{align}\label{solu01}
u(\Bx)=
\begin{cases}
u^i(x)+\mathcal {S}_{D_\delta}^{\omega}[\psi](\Bx), & \Bx\in
\mathbb{R}^2\setminus\overline{{D_\delta}},\\
\mathcal {S}_{D_\delta}^{k_c}[\varphi](\Bx), &\Bx\in D_\delta,
\end{cases}
\end{align}
where by using the jump formula \eqnref{eq:trace}, $(\varphi,\psi)\in H^{-\frac{1}{2}}(\partial {D_\delta})\times H^{-\frac{1}{2}}(\partial {D_\delta})$ satisfy the following integral system:
\begin{equation}\label{integ01}
\begin{cases}
\mathcal {S}_{D_\delta}^{k_c}[\varphi]-\mathcal {S}_{D_\delta}^{\omega}[\psi]=u^i &\text{on}\ \partial {D_\delta},\medskip\\
\frac{1}{\varepsilon_c}\left(-\frac{1}{2}\mathcal {I}+(\mathcal
{K}_{D_\delta}^{k_c})^*\right)[\varphi]
-\left(\frac{1}{2}\mathcal {I}+(\mathcal
{K}_{D_\delta}^{\omega})^*\right)[\psi] =\frac{\partial u^i}{\partial\nu} &\text{on}\ \partial {D_\delta}.
\end{cases}
\end{equation}

\begin{lem}\label{lem:a1}
For $k\ll 1$, the operator $\mathcal {S}_{D_\delta}^{k}:\ \mathcal {H}^*(\partial D_\delta)\rightarrow\mathcal {H}(\partial D_\delta)$ is invertible. Furthermore,
it holds
\begin{align}\label{singinv}
\left(\mathcal {S}_{D_\delta}^{k}\right)^{-1}=&\mathcal{L}_{D_\delta}+\mathcal{U}_k-k^{2}\ln k\mathcal{L}_{D_\delta}\mathcal{S}_{D_\delta,1}\mathcal{L}_{D_\delta}- k^{2}\left(\mathcal{L}_{D_\delta}\mathcal{S}_{D_\delta,2}\mathcal{L}_{D_\delta}+\ln k\left(\mathcal{U}_k\mathcal{S}_{D_\delta,1}\mathcal{L}_{D_\delta}+\mathcal{L}_{D_\delta}\mathcal{S}_{D_\delta,1}\mathcal{U}_k\right)\right)\nonumber\\
&+\mathcal{O}(k^2\ln^{-1} k),
\end{align}
where $\mathcal {L}_{D_\delta}=\mathcal {P}_{\mathcal {H}_0^*(\partial
D_\delta)}(\widetilde{\mathcal {S}}_{D_\delta}^{0})^{-1}$ and $\mathcal
{U}_k=\frac{\langle(\widetilde{\mathcal
{S}}_{D_\delta}^{0})^{-1}[\cdot],\varphi_0\rangle}{\mathcal {S}_{D_\delta}^{0}[\varphi_0]+\frac{\mathrm{i}}{4} c_ka_{0,\delta}}\varphi_0$. In
particular, $\mathcal {U}_k=\mathcal {O}((\ln k)^{-1})$.
\end{lem}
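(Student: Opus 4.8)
\emph{Setup.} The plan is to combine the small-frequency expansion \eqref{eq:asymplayer01} with an exact inversion of the leading-order operator, followed by a Neumann-series argument. Write
\[
\mathcal{S}_{D_\delta}^{k}=\mathcal{A}_k+\mathcal{E}_k,\qquad
\mathcal{A}_k:=\mathcal{S}_{D_\delta}^{0}-\frac{\mathrm{i}}{4}\,c_k\,\langle 1,\,\cdot\,\rangle_{\partial D_\delta},
\]
so that, by \eqref{eq:asymplayer01}, $\mathcal{E}_k=(k^{2}\ln k)\mathcal{S}_{D_\delta,1}+k^{2}\mathcal{S}_{D_\delta,2}+k^{4}\ln k\,\mathcal{O}(\|\cdot\|)$ is a bounded operator from $\mathcal{H}^{*}(\partial D_\delta)$ to $\mathcal{H}(\partial D_\delta)$ of norm $\mathcal{O}(k^{2}\ln k)$, the boundedness of $\mathcal{S}_{D_\delta,1}$ and $\mathcal{S}_{D_\delta,2}$ being immediate from their kernels in \eqref{laypot01}. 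First I would invert $\mathcal{A}_k$ explicitly; then, since $\mathcal{E}_k$ is a small perturbation, both the invertibility of $\mathcal{S}_{D_\delta}^{k}$ and the expansion \eqref{singinv} will follow by expanding $(\mathcal{I}+\mathcal{A}_k^{-1}\mathcal{E}_k)^{-1}$ in a geometric series.

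\emph{Inverting the leading part.} Here I would use the splitting $\mathcal{H}^{*}(\partial D_\delta)=\mathcal{H}_0^{*}(\partial D_\delta)\oplus\{c\varphi_0\}$. On $\mathcal{H}_0^{*}(\partial D_\delta)$ one has $\langle 1,\varphi\rangle_{\partial D_\delta}=\int_{\partial D_\delta}\varphi\,d\sigma=0$, so $\mathcal{A}_k$ restricted to this subspace coincides with $\widetilde{\mathcal{S}}_{D_\delta}^{0}$, which is invertible; composing its inverse with the projection onto $\mathcal{H}_0^{*}(\partial D_\delta)$ recovers $\mathcal{L}_{D_\delta}$. On the remaining line, the eigenrelation $(\mathcal{K}_{D_\delta}^{0})^{*}\varphi_0=\frac{1}{2}\varphi_0$ together with the jump formula \eqref{eq:trace} gives $\frac{\partial}{\partial\nu}\mathcal{S}_{D_\delta}^{0}[\varphi_0]|_{-}=0$, so $\mathcal{S}_{D_\delta}^{0}[\varphi_0]$ is a constant function on $\partial D_\delta$; hence $\mathcal{A}_k[\varphi_0]$ is a constant multiple of $\chi(\partial D_\delta)$, whose coefficient is precisely the scalar $\mathcal{S}_{D_\delta}^{0}[\varphi_0]+\frac{\mathrm{i}}{4}c_k a_{0,\delta}$ appearing in the denominator of $\mathcal{U}_k$, and which is nonzero for $k$ small since $c_k=1+\mathrm{i}\frac{2}{\pi}(\ln\frac{k}{2}+\gamma)$ is of order $|\ln k|$. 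Using $(\widetilde{\mathcal{S}}_{D_\delta}^{0})^{-1}[\chi(\partial D_\delta)]=\varphi_0$, inversion in this direction produces exactly the rank-one operator $\mathcal{U}_k$; a direct verification that $(\mathcal{L}_{D_\delta}+\mathcal{U}_k)\mathcal{A}_k=\mathcal{A}_k(\mathcal{L}_{D_\delta}+\mathcal{U}_k)=\mathcal{I}$ then yields $\mathcal{A}_k^{-1}=\mathcal{L}_{D_\delta}+\mathcal{U}_k$. Since the numerator of $\mathcal{U}_k$ is $\mathcal{O}(1)$ while its denominator is of order $|\ln k|$, we obtain $\mathcal{U}_k=\mathcal{O}((\ln k)^{-1})$, and in particular $\|\mathcal{A}_k^{-1}\|\le C$ uniformly in $k$.

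\emph{Perturbation and bookkeeping.} With $\mathcal{A}_k$ invertible, factor $\mathcal{S}_{D_\delta}^{k}=\mathcal{A}_k(\mathcal{I}+\mathcal{A}_k^{-1}\mathcal{E}_k)$; as $\|\mathcal{A}_k^{-1}\mathcal{E}_k\|\le C\,\mathcal{O}(k^{2}\ln k)\to 0$, the Neumann series converges and
\[
\bigl(\mathcal{S}_{D_\delta}^{k}\bigr)^{-1}=\mathcal{A}_k^{-1}-\mathcal{A}_k^{-1}\mathcal{E}_k\mathcal{A}_k^{-1}+\mathcal{A}_k^{-1}\mathcal{E}_k\mathcal{A}_k^{-1}\mathcal{E}_k\mathcal{A}_k^{-1}-\cdots.
\]
Substituting $\mathcal{A}_k^{-1}=\mathcal{L}_{D_\delta}+\mathcal{U}_k$ and $\mathcal{E}_k=(k^{2}\ln k)\mathcal{S}_{D_\delta,1}+k^{2}\mathcal{S}_{D_\delta,2}+\mathcal{O}(k^{4}\ln k)$ and retaining every contribution larger than $\mathcal{O}(k^{2}\ln^{-1}k)$, the second Neumann term $-\mathcal{A}_k^{-1}\mathcal{E}_k\mathcal{A}_k^{-1}$ contributes the three pieces $-k^{2}\ln k\,\mathcal{L}_{D_\delta}\mathcal{S}_{D_\delta,1}\mathcal{L}_{D_\delta}$, $-k^{2}\ln k(\mathcal{U}_k\mathcal{S}_{D_\delta,1}\mathcal{L}_{D_\delta}+\mathcal{L}_{D_\delta}\mathcal{S}_{D_\delta,1}\mathcal{U}_k)$, and $-k^{2}\mathcal{L}_{D_\delta}\mathcal{S}_{D_\delta,2}\mathcal{L}_{D_\delta}$, while every other contribution carries either a second factor $\mathcal{U}_k=\mathcal{O}(\ln^{-1}k)$ (e.g.\ $k^{2}\ln k\,\mathcal{U}_k\mathcal{S}_{D_\delta,1}\mathcal{U}_k$, $k^{2}\mathcal{U}_k\mathcal{S}_{D_\delta,2}\mathcal{L}_{D_\delta}$) or a second factor $\mathcal{E}_k$ (giving $\mathcal{O}(k^{4}\ln^{2}k)$), hence is absorbed in the $\mathcal{O}(k^{2}\ln^{-1}k)$ remainder. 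Collecting the surviving terms reproduces \eqref{singinv}. The operator bounds and the geometric-series bookkeeping are routine; the delicate point is the exact inversion of $\mathcal{A}_k$ — exploiting the constancy of $\mathcal{S}_{D_\delta}^{0}[\varphi_0]$ and the identity $(\widetilde{\mathcal{S}}_{D_\delta}^{0})^{-1}[\chi(\partial D_\delta)]=\varphi_0$ to identify $\mathcal{A}_k^{-1}$ with $\mathcal{L}_{D_\delta}+\mathcal{U}_k$, and observing that the $\varphi_0$-channel of $\mathcal{A}_k$ has size $|\ln k|$, which is simultaneously what makes $\mathcal{A}_k^{-1}$ uniformly bounded (so the Neumann series converges) and what forces $\mathcal{U}_k=\mathcal{O}(\ln^{-1}k)$ (so the discarded terms are genuinely negligible).
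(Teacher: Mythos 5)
Your proposal is correct, and its skeleton is the same as the paper's: the same splitting $\mathcal{S}_{D_\delta}^{k}=\widehat{\mathcal{S}}_{D_\delta}^{k}+\mathcal{G}_k$ (your $\mathcal{A}_k+\mathcal{E}_k$), the same explicit leading inverse $\mathcal{L}_{D_\delta}+\mathcal{U}_k$, and the same first-order perturbation bookkeeping yielding \eqref{singinv} with remainder $\mathcal{O}(k^2\ln^{-1}k)$. Where you differ is in how invertibility is obtained. The paper writes $\widehat{\mathcal{S}}_{D_\delta}^{k}=\widetilde{\mathcal{S}}_{D_\delta}^{0}+\Upsilon_k$ with $\Upsilon_k$ rank one, proves invertibility of $\widehat{\mathcal{S}}_{D_\delta}^{k}$ by compactness of $\Upsilon_k$ plus the Fredholm alternative, proves invertibility of $\mathcal{S}_{D_\delta}^{k}$ by a second Fredholm argument whose injectivity step uses uniqueness for the Helmholtz equation (small $k$ not a Dirichlet eigenvalue, jump of normal derivatives), and then computes $(\widehat{\mathcal{S}}_{D_\delta}^{k})^{-1}$ by inverting the rank-one perturbation $\Lambda_k=(\widetilde{\mathcal{S}}_{D_\delta}^{0})^{-1}\widehat{\mathcal{S}}_{D_\delta}^{k}=\mathcal{I}+\langle\cdot,\varphi_0\rangle(\cdots)\varphi_0$. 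You instead invert $\mathcal{A}_k$ directly through the decomposition $\mathcal{H}^*(\partial D_\delta)=\mathcal{H}_0^*(\partial D_\delta)\oplus\{c\varphi_0\}$, using that $\mathcal{S}_{D_\delta}^{0}[\varphi_0]$ is constant (which the paper uses implicitly when it places $\mathcal{S}_{D_\delta}^{0}[\varphi_0]+\frac{\mathrm{i}}{4}c_k a_{0,\delta}$ in a denominator, but never justifies — your jump-formula argument via \eqref{eq:trace} supplies this) and that $(\widetilde{\mathcal{S}}_{D_\delta}^{0})^{-1}[\chi(\partial D_\delta)]=\varphi_0$; invertibility of $\mathcal{S}_{D_\delta}^{k}$ then comes for free from the Neumann series, since $\|\mathcal{A}_k^{-1}\|$ is uniformly bounded and $\|\mathcal{E}_k\|=\mathcal{O}(k^2\ln k)$, so the paper's separate Fredholm/PDE-uniqueness step is not needed. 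Both routes are sound; yours is slightly more self-contained and makes the $|\ln k|$ size of the $\varphi_0$-channel (hence $\mathcal{U}_k=\mathcal{O}((\ln k)^{-1})$) explicit, while the paper's Fredholm route avoids having to exhibit the direct-sum structure of the target space $\mathcal{H}(\partial D_\delta)=\mathcal{S}_{D_\delta}^0[\mathcal{H}_0^*(\partial D_\delta)]\oplus\{c\chi(\partial D_\delta)\}$ that your block inversion tacitly relies on (it does follow from the invertibility of $\widetilde{\mathcal{S}}_{D_\delta}^{0}$, so this is a presentational rather than a mathematical gap); note also that your verification $(\mathcal{L}_{D_\delta}+\mathcal{U}_k)\mathcal{A}_k=\mathcal{I}$ on the $\varphi_0$-line uses the same implicit normalization $\langle\varphi_0,\varphi_0\rangle=1$ that the paper itself employs.
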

\begin{proof}
We set $\widehat{\mathcal
{S}}_{D_\delta}^{k}[\psi]=\Scal_{D_\delta}^0[\psi] -\frac{\mathrm{i}}{4} c_k \la 1, \psi\ra_{L^2(\p D_\delta)}$, $\forall\psi\in\mathcal {H}^*(\partial D_\delta)$. Firstly, we prove that, for $k$ small enough, $\widehat{\mathcal
{S}}_{D_\delta}^{k}:\ \mathcal {H}^*(\partial D_\delta)\rightarrow\mathcal
{H}(\partial D_\delta)$ is invertible. In fact, for any $\psi\in\mathcal {H}^*(\partial D_\delta)$
\begin{align*}
(\mathcal {S}_{D_\delta}^{0}-\widetilde{\mathcal
{S}}_{D_\delta}^{0})[\psi]=&(\mathcal {S}_{D_\delta}^{0}-\widetilde{\mathcal
{S}}_{D_\delta}^{0})[\mathcal {P}_{\mathcal {H}_0^*(\partial
D_\delta)}[\psi]+\langle\psi,\varphi_0\rangle\varphi_0]\\
=&\langle\psi,\varphi_0\rangle(\mathcal
{S}_{D_\delta}^{0}[\varphi_0]-\widetilde{\mathcal
{S}}_{D_\delta}^{0}[\varphi_0])\\
=&\langle\psi,\varphi_0\rangle(\mathcal
{S}_{D_\delta}^{0}[\varphi_0]-\chi(\partial D_\delta)),
\end{align*}
and it deduces
\begin{align}\label{s1}
\widehat{\mathcal {S}}_{D_\delta}^{k}[\psi]=&\widetilde{\mathcal
{S}}_{D_\delta}^{0}[\psi]+\langle\psi,\varphi_0\rangle(\mathcal {S}_{D_\delta}^{0}[\varphi_0]-\chi(\partial
D_\delta))\nonumber\\
&-\frac{\mathrm{i}}{4} c_k\int_{\partial
D_\delta}\left(\mathcal {P}_{\mathcal {H}_0^*(\partial
D_\delta)}[\psi]+\langle\psi,\varphi_0\rangle\varphi_0\right)
d\sigma(y)\nonumber\\
=&\widetilde{\mathcal
{S}}_{D_\delta}^{0}[\psi]+\Upsilon_k[\psi],
\end{align}
where $$\Upsilon_k[\psi]=\langle\psi,\varphi_0\rangle\left(\mathcal {S}_{D_\delta}^{0}[\varphi_0]-\chi(\partial
D_\delta)+\frac{\mathrm{i}}{4} c_ka_{0,\delta}\right).$$
Owing to the invertibility of $\widetilde{\mathcal {S}}_{D_\delta}^{0}$, we see $\widehat{\mathcal {S}}_{D_\delta}^{k}(\widetilde{\mathcal
{S}}_{D_\delta}^{0})^{-1}=\mathcal {I}+\Upsilon_k(\widetilde{\mathcal {S}}_{D_\delta}^{0})^{-1}$. From the compactness of $\Upsilon_k$ and the Fredholm alternative theorem, we only need to prove the injectivity of $\mathcal {I}+\Upsilon_k(\widetilde{\mathcal {S}}_{D_\delta}^{0})^{-1}$.

In fact, if $v\in H^{\frac{1}{2}}(\partial D_\delta)$ satisfies $(\mathcal {I}+\Upsilon_k(\widetilde{\mathcal {S}}_{D_\delta}^{0})^{-1})[v]=0$. By the definition of $\widetilde{\mathcal {S}}_{D_\delta}^{0}$ and $\Upsilon_k$, if
$(\widetilde{\mathcal {S}}_{D_\delta}^{0})^{-1}[v]\in H_0^{-\frac{1}{2}}(\partial D_\delta)$, it implies $(\mathcal {I}+\Upsilon_k(\widetilde{\mathcal {S}}_{D_\delta}^{0})^{-1})[v]=v$, and then $v=0$. If $(\widetilde{\mathcal {S}}_{D_\delta}^{0})^{-1}[v]\in \{\mu\varphi_0,\ \mu\in\mathbb{C}\}$, we find
\begin{align*}
(\mathcal {I}+\Upsilon_k(\widetilde{\mathcal {S}}_{D_\delta}^{0})^{-1})[v]
=&v+\mu\left(\mathcal {S}_{D_\delta}^{0}[\varphi_0]-\chi(\partial D_\delta)+\frac{\mathrm{i}}{4} c_ka_{0,\delta}\right)\\
=&\mu\left(\mathcal {S}_{D_\delta}^{0}[\varphi_0]+\frac{\mathrm{i}}{4} c_ka_{0,\delta}\right),
\end{align*}
since we can always find a small enough $k$ such that $\mathcal {S}_{D_\delta}^{0}[\varphi_0]\neq-\frac{\mathrm{i}}{4} c_ka_{0,\delta}$, it follows that $\mu=0$ and then $v=0$.

Since, for $k$ small enough, $\widehat{\mathcal {S}}_{D_\delta}^{k}-\mathcal {S}_{D_\delta}^{k}$ is a compact operator and $\widehat{\mathcal {S}}_{D_\delta}^{k}$
is invertible. Furthermore, it is easy to prove that $\mathcal {S}_{D_\delta}^{k}$ is injective for $k$ small enough. In fact, we consider $\psi\in H^{-\frac{1}{2}}(\partial D_\delta)$ such that $\mathcal {S}_{D_\delta}^{k}[\psi]=0$. Since $u=\mathcal {S}_{D_\delta}^{k}[\psi]$ satisfies Helmholtz equation $\Delta u+k^2u=0$ in $D_\delta$ and $\mathbb{R}^2\setminus\overline{D_\delta}$. Therefore, if $k$ is sufficiently small such that $k^2$ is neither an eigenvalue of $-\Delta$ in $D_\delta$ with the Dirichlet boundary condition on $\partial D_\delta$ nor in $\mathbb{R}^2\setminus\overline{D_\delta}$ with the Dirichlet boundary condition on $\partial D_\delta$ and the Sommerfeld radiation condition. It follows that $u=0$ and thus, $\psi=\frac{\partial u}{\partial\nu}\big|_+-\frac{\partial u}{\partial\nu}\big|_-=0$, as desired. By using the Fredholm alternative theorem, we see that, as $k$ is small enough, $\mathcal {S}_{D_\delta}^{k}$ is invertible.

Next, we verify (\ref{singinv}). Obviously, (\ref{eq:asymplayer01}) can be written as
\begin{align*}
\mathcal {S}_{D_\delta}^{k}=\widehat{\mathcal {S}}_{D_\delta}^{k}+\mathcal
{G}_k,
\end{align*}
where $\mathcal {G}_k=(k^{2}\ln k)\mathcal
{S}_{D_\delta,1}+k^{2}\mathcal {S}_{D_\delta,2}+\mathcal
{O}(k^{4}\ln k)$. Since $\mathcal {S}_{D_\delta}^{k}$ is invertible, it follows that
\begin{align*}
(\mathcal {S}_{D_\delta}^{k})^{-1}=(\mathcal {I}+(\widehat{\mathcal
{S}}_{D_\delta}^{k})^{-1}\mathcal {G}_k)(\widehat{\mathcal
{S}}_{D_\delta}^{k})^{-1}.
\end{align*}
Noting that $\|(\widehat{\mathcal {S}}_{D_\delta}^{k})^{-1}\|_{\mathcal
{L}(\mathcal {H}(\partial D_\delta),\mathcal {H}^*(\partial D_\delta))}$ is
bounded for every $k$. Thereby, when $k$ is small enough, we obtain
\begin{align}\label{Sinv}
(\mathcal {S}_{D_\delta}^{k})^{-1}=(\widehat{\mathcal
{S}}_{D_\delta}^{k})^{-1}-(\widehat{\mathcal
{S}}_{D_\delta}^{k})^{-1}\mathcal {G}_k(\widehat{\mathcal
{S}}_{D_\delta}^{k})^{-1}+\mathcal {O}(k^{4}\ln^2 k).
\end{align}
Moreover, set $\Lambda_k=(\widetilde{\mathcal
{S}}_{D_\delta}^{0})^{-1}\widehat{\mathcal {S}}_{D_\delta}^{k}$, then
\begin{align*}
\Lambda_k=&(\widetilde{\mathcal
{S}}_{D_\delta}^{0})^{-1}(\widetilde{\mathcal
{S}}_{D_\delta}^{0}+\Upsilon_k)=(\widetilde{\mathcal
{S}}_{D_\delta}^{0})^{-1}(\widetilde{\mathcal
{S}}_{D_\delta}^{0}+\langle\cdot,\varphi_0\rangle(\mathcal {S}_{D_\delta}^{0}[\varphi_0]-\chi(\partial D_\delta)+\frac{\mathrm{i}}{4} c_ka_{0,\delta}))\\
=&\mathcal {I}+\langle\cdot,\varphi_0\rangle(\mathcal {S}_{D_\delta}^{0}[\varphi_0]-\chi(\partial
D_\delta)+\frac{\mathrm{i}}{4} c_ka_{0,\delta})\varphi_0.
\end{align*}
And then $$(\Lambda_k)^{-1}=\mathcal
{I}-\langle\cdot,\varphi_0\rangle\frac{\mathcal
{S}_{D_\delta}^{0}[\varphi_0]-\chi(\partial D_\delta)+\frac{\mathrm{i}}{4} c_ka_{0,\delta}}{\mathcal
{S}_{D_\delta}^{0}[\varphi_0]+\frac{\mathrm{i}}{4} c_ka_{0,\delta}}\varphi_0.$$

Therefore, we see that
\begin{align}\label{Sinv01}
\left(\widehat{\mathcal
{S}}_{D_\delta}^{k}\right)^{-1}=(\Lambda_k)^{-1}(\widetilde{\mathcal
{S}}_{D_\delta}^{0})^{-1}=(\widetilde{\mathcal
{S}}_{D_\delta}^{0})^{-1}-\langle(\widetilde{\mathcal
{S}}_{D_\delta}^{0})^{-1}[\cdot],\varphi_0\rangle\varphi_0+\frac{\langle(\widetilde{\mathcal
{S}}_{D_\delta}^{0})^{-1}[\cdot],\varphi_0\rangle}{\mathcal {S}_{D_\delta}^{0}[\varphi_0]+\frac{\mathrm{i}}{4} c_ka_{0,\delta}}\varphi_0,
\end{align}
then, substituting (\ref{Sinv01}) into (\ref{Sinv}), it deduces (\ref{singinv}).
Furthermore, from the definition of $c_k$, $\mathcal {U}_k=\mathcal {O}((\ln k)^{-1})$ holds obviously.
\end{proof}

\begin{rem}
It is easy to find the operator $(\widetilde{\mathcal{S}}_{D_\delta}^{0})^{-1}$ is uniformly bounded with respect to $\delta$. In fact, from the definition of norm $\mathcal{H}(\partial D_\delta)$ and $\mathcal{H}^*(\partial D_\delta)$, we have
\begin{align*}
\|(\widetilde{\mathcal{S}}_{D_\delta}^{0})^{-1}[\psi]\|^2=-\left((\widetilde{\mathcal{S}}_{D_\delta}^{0})^{-1}[\psi],\psi\right)_{-\frac{1}{2},\frac{1}{2}}
=\|\psi\|_{\mathcal{H}(\partial D_\delta)}^2.
\end{align*}
Thus, we write $(\widetilde{\mathcal{S}}_{D_\delta}^{0})^{-1}$ as $(\widetilde{\mathcal{S}}_{D}^{0})^{-1}$ occasionally in what follows.
\end{rem}

Thanks to the invertibility of $\mathcal{S}_{D_\delta}^{k}$, together with the first equation in (\ref{integ01}), one can directly obtain that
\begin{equation}\label{phi}
\varphi=\left(\mathcal {S}_{D_\delta}^{k_c}\right)^{-1}\left(\mathcal {S}_{D_\delta}^{\omega}[\psi]+u^i\right).
\end{equation}
Then, from the second equation in (\ref{integ01}), we have that
\begin{equation}\label{operequ01}
\mathcal {A}_{D_\delta}(\omega)[\psi]=f,
\end{equation}
where
\begin{align}\label{operleft}
\mathcal {A}_{D_\delta}(\omega)=&\left(\frac{1}{2}\mathcal
{I}+(\mathcal {K}_{D_\delta}^{\omega})^*\right)
+\frac{1}{\varepsilon_c}\left(\frac{1}{2}\mathcal {I}-(\mathcal
{K}_{D_\delta}^{k_c})^*\right)(\mathcal {S}_{D_\delta}^{k_c})^{-1}\mathcal
{S}_{D_\delta}^{\omega},\\
f=&-\frac{\partial u^i}{\partial\nu}-\frac{1}{\varepsilon_c}\left(\frac{1}{2}\mathcal {I}-(\mathcal
{K}_{D_\delta}^{k_c})^*\right)(\mathcal {S}_{D_\delta}^{k_c})^{-1}[u^i].\label{operright}
\end{align}
Clearly,
\begin{align}
\mathcal {A}_{D_\delta}(0)=\mathcal {A}_{D_\delta,0}=&\left(\frac{1}{2}\mathcal {I}+(\mathcal
{K}_{D_\delta}^0)^*\right) +\frac{1}{\varepsilon_c}\left(\frac{1}{2}\mathcal
{I}-(\mathcal
{K}_{D_\delta}^0)^*\right)\nonumber\\
=&\frac{1}{2}\left(1+\frac{1}{\varepsilon_c}\right)\mathcal
{I}+\left(1-\frac{1}{\varepsilon_c}\right)(\mathcal
{K}_{D_\delta}^0)^*.\label{mainoper}
\end{align}

Denote by $\left\{\lambda_{j,\delta},\varphi_{j,\delta}\right\},\ j=0,1,2,\cdots$, the eigenvalue
and eigenfunction pair of $(\mathcal {K}_{D_\delta}^{0})^*$,
owing to (\ref{specexpan01}) and (\ref{mainoper}), we have
\begin{equation}\label{}
\mathcal
{A}_{D_\delta,0}[\psi]=\sum_{j=0}^\infty\tau_{j,\delta}\frac{\langle\psi,\varphi_{j,\delta}\rangle}
{\langle\varphi_{j,\delta},\varphi_{j,\delta}\rangle}\varphi_{j,\delta},
\end{equation}
where
\begin{equation}\label{taoj}
\tau_{j,\delta}=\frac{1}{2}\left(1+\frac{1}{\varepsilon_c}\right)+\left(1-\frac{1}{\varepsilon_c}\right)\lambda_{j,\delta}.
\end{equation}

First, we give the the asymptotic expansion formula of $\mathcal {A}_{D_\delta}(\omega)$ as $\omega\rightarrow0$ and $\delta\rightarrow0$.

\begin{lem}\label{lem3.4}
The operator $\mathcal {A}_{D_\delta}(\omega):\ \mathcal {H}^*(\partial
D_\delta)\rightarrow\mathcal {H}^*(\partial D_\delta)$ has the following expansion
formula:
\begin{align}\label{expan01}
\mathcal {A}_{D_\delta}(\omega)=\mathcal {A}_{D_\delta,0}+\omega^{2}\ln\omega\mathcal
{A}_{D_\delta,1}+\mathcal{O}(\omega^2).
\end{align}
Furthermore,
\begin{align}\label{expan02}
\mathcal {A}_{D_\delta}(\omega)=\mathcal {A}_{D,0}+\omega^{2}\ln\omega\mathcal
{A}_{D,1}+o(\omega^2\ln\omega)+o(1)_\delta,
\end{align}
where
\begin{align}\label{3.19}
&\mathcal
{A}_{D_\delta,0}=\frac{1}{2}\left(1+\frac{1}{\varepsilon_c}\right)\mathcal
{I}+\left(1-\frac{1}{\varepsilon_c}\right)(\mathcal {K}_{D_\delta}^0)^*;\\
&\mathcal
{A}_{D,0}=\frac{1}{2}\left(1+\frac{1}{\varepsilon_c}\right)\mathcal
{I}+\left(1-\frac{1}{\varepsilon_c}\right)\mathcal
{K}_0;\\
&\mathcal {A}_{D_\delta,1}=\frac{1}{\varepsilon_c}\left(\frac{1}{2}\mathcal
{I}-(\mathcal{K}_{D_\delta}^0)^*\right)(\widetilde{\mathcal
{S}}_{D_\delta}^{0})^{-1}\mathcal
{S}_{D_\delta,1}\left(\mathcal
{I}-\varepsilon_c\mathcal {P}_{\mathcal {H}_0^*(\partial
D_\delta)}\right)\nonumber\\
&\ \ \ \ \ \ \ +\mathcal {K}_{D_\delta,1}(\mathcal
{I}-\varepsilon_c\mathcal {P}_{\mathcal {H}_0^*(\partial D_\delta)}),\\
&\mathcal {A}_{D,1}=\frac{1}{\varepsilon_c}\left(\frac{1}{2}\mathcal
{I}-\mathcal {K}_{0}\right)(\widetilde{\mathcal
{S}}_{D}^{0})^{-1}\mathcal
{S}_{D,1,0}\left(\mathcal
{I}-\varepsilon_c\mathcal {P}_{\mathcal {H}_0^*(\partial
D)}\right)\nonumber\\
&\ \ \ \ \ \ \ +\mathcal {K}_{D,1,0}(\mathcal
{I}-\varepsilon_c\mathcal {P}_{\mathcal {H}_0^*(\partial D)}),
\end{align}
and $o(1)_\delta$ denotes an infinitesimal with respect to $\delta$.
\end{lem}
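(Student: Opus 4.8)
The plan is to substitute the three small-parameter expansions already established --- the frequency expansions \eqref{eq:asymplayer01}--\eqref{eq:asymplayer02} of $\mathcal{S}_{D_\delta}^{k}$ and $(\mathcal{K}_{D_\delta}^{k})^{*}$, and the inverse expansion \eqref{singinv} of $(\mathcal{S}_{D_\delta}^{k_c})^{-1}$ from Lemma~\ref{lem:a1} --- into the definition \eqref{operleft} of $\mathcal{A}_{D_\delta}(\omega)$, and then collect terms by order in $\omega$. Since $k_c=\omega\sqrt{\varepsilon_c}$, the conversion between $k_c$-quantities and $\omega$-quantities is governed by $k_c^{2}=\varepsilon_c\omega^{2}$, $k_c^{2}\ln k_c=\varepsilon_c\omega^{2}\ln\omega+\mathcal{O}(\omega^{2})$, and the fact that $c_\omega-c_{k_c}=-\tfrac{\mathrm{i}}{\pi}\ln\varepsilon_c$ is a nonzero $\mathcal{O}(1)$ constant; in particular $\mathcal{S}_{D_\delta}^{\omega}-\mathcal{S}_{D_\delta}^{k_c}=-\tfrac{1}{4\pi}\ln\varepsilon_c\,\langle 1,\cdot\rangle_{\partial D_\delta}+(1-\varepsilon_c)\,\omega^{2}\ln\omega\,\mathcal{S}_{D_\delta,1}+\mathcal{O}(\omega^{2})$, so this difference carries an $\mathcal{O}(1)$ piece valued in the constant functions. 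Writing $(\mathcal{S}_{D_\delta}^{k_c})^{-1}\mathcal{S}_{D_\delta}^{\omega}=\mathcal{I}+(\mathcal{S}_{D_\delta}^{k_c})^{-1}(\mathcal{S}_{D_\delta}^{\omega}-\mathcal{S}_{D_\delta}^{k_c})$ isolates a clean leading term.

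The key structural input is the spectral geometry of $(\mathcal{K}_{D_\delta}^{0})^{*}$: $\varphi_{0,\delta}$ spans $\ker(\tfrac12\mathcal{I}-(\mathcal{K}_{D_\delta}^{0})^{*})$, one has $\tfrac12\mathcal{I}-(\mathcal{K}_{D_\delta}^{0})^{*}=(\tfrac12\mathcal{I}-(\mathcal{K}_{D_\delta}^{0})^{*})\mathcal{P}_{\mathcal{H}_0^{*}(\partial D_\delta)}$ by \eqref{re01}, $(\widetilde{\mathcal{S}}_{D_\delta}^{0})^{-1}[\chi(\partial D_\delta)]=\varphi_{0,\delta}$, $\mathcal{L}_{D_\delta}=\mathcal{P}_{\mathcal{H}_0^{*}}(\widetilde{\mathcal{S}}_{D_\delta}^{0})^{-1}$, and the range of $\mathcal{U}_{k_c}$ lies in $\{c\varphi_{0,\delta}\}$ with $\mathcal{U}_{k_c}=\mathcal{O}((\ln\omega)^{-1})$. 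After left-multiplication by the prefactor $\tfrac{1}{\varepsilon_c}(\tfrac12\mathcal{I}-(\mathcal{K}_{D_\delta}^{k_c})^{*})$, the $\mathcal{U}_{k_c}$-part of \eqref{singinv}, the $\mathcal{O}(1)$ constant-function piece of $\mathcal{S}_{D_\delta}^{\omega}-\mathcal{S}_{D_\delta}^{k_c}$ (since $(\mathcal{S}_{D_\delta}^{k_c})^{-1}[\chi(\partial D_\delta)]$ is a scalar multiple of $\varphi_{0,\delta}$, as $\mathcal{L}_{D_\delta}[\chi]=\mathcal{P}_{\mathcal{H}_0^{*}}[\varphi_{0,\delta}]=0$ kills the remaining terms of \eqref{singinv}), and the $\Upsilon_\omega$-type contributions (the rank-one operator from the proof of Lemma~\ref{lem:a1}) drop out --- modulo the $k_c^{2}\ln k_c$-part of the prefactor, whose action on $\varphi_{0,\delta}$ is $\mathcal{O}(\omega^{2}\ln\omega)\cdot\mathcal{O}((\ln\omega)^{-1})=\mathcal{O}(\omega^{2})$ --- while $\mathcal{L}_{D_\delta}\Upsilon_\omega=0$ because $\mathcal{P}_{\mathcal{H}_0^{*}}[\varphi_{0,\delta}]=0$. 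What survives at order $1$ is $(\tfrac12\mathcal{I}+(\mathcal{K}_{D_\delta}^{0})^{*})+\tfrac{1}{\varepsilon_c}(\tfrac12\mathcal{I}-(\mathcal{K}_{D_\delta}^{0})^{*})=\mathcal{A}_{D_\delta,0}$; what survives at order $\omega^{2}\ln\omega$, after inserting $k_c^{2}\ln k_c=\varepsilon_c\omega^{2}\ln\omega+\mathcal{O}(\omega^{2})$ and using $\mathcal{L}_{D_\delta}\widetilde{\mathcal{S}}_{D_\delta}^{0}=\mathcal{P}_{\mathcal{H}_0^{*}}$ and $(\tfrac12\mathcal{I}-(\mathcal{K}_{D_\delta}^{0})^{*})\mathcal{P}_{\mathcal{H}_0^{*}}=\tfrac12\mathcal{I}-(\mathcal{K}_{D_\delta}^{0})^{*}$, is exactly $\mathcal{A}_{D_\delta,1}$, the factor $(\mathcal{I}-\varepsilon_c\mathcal{P}_{\mathcal{H}_0^{*}(\partial D_\delta)})$ recording the competition between the $\omega^{2}\ln\omega\,\mathcal{S}_{D_\delta,1}$ term of $\mathcal{S}_{D_\delta}^{\omega}$ and the $-(k_c^{2}\ln k_c)\mathcal{L}_{D_\delta}\mathcal{S}_{D_\delta,1}\mathcal{L}_{D_\delta}$ term of \eqref{singinv}. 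All remaining contributions --- the unidentified $\omega^{2}\mathcal{O}(\cdot)$ and $k_c^{2}\mathcal{O}(\cdot)$ pieces, products of two small factors such as $\omega^{4}\ln^{2}\omega$, and the $\mathcal{O}(k_c^{2}\ln^{-1}k_c)$ tail of \eqref{singinv} --- are $\mathcal{O}(\omega^{2})$, which establishes \eqref{expan01}.

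For \eqref{expan02} I would pass to $\delta\to 0$ inside the coefficient operators: Lemma~\ref{lem3.2} gives $(\mathcal{K}_{D_\delta}^{0})^{*}=\mathcal{K}_0+\mathcal{O}(\delta)$, and Lemmas~\ref{le:app0103} and~\ref{le:app0101} replace $\widetilde{\mathcal{S}}_{D_\delta}^{0}$, $\mathcal{S}_{D_\delta,1}$, $\mathcal{K}_{D_\delta,1}$ and $\mathcal{P}_{\mathcal{H}_0^{*}(\partial D_\delta)}$ by $\widetilde{\mathcal{S}}_{D}^{0}$, $\mathcal{S}_{D,1,0}$, $\mathcal{K}_{D,1,0}$ and $\mathcal{P}_{\mathcal{H}_0^{*}(\partial D)}$ up to $o(1)_\delta$ (after the change of variable $\tilde\varphi(\tilde x)=\varphi(x)$). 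Since $\mathcal{A}_{D_\delta,0}$ and the $\omega^{2}\ln\omega$-coefficient are $\mathcal{O}(1)$, these substitutions cost $o(1)_\delta$ and $\omega^{2}\ln\omega\cdot o(1)_\delta$ respectively; combining with the $\mathcal{O}(\omega^{2})=o(\omega^{2}\ln\omega)$ remainder of \eqref{expan01} gives \eqref{expan02}.

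The main obstacle is the second step --- confirming that the $\mathcal{O}(\omega^{2}\ln\omega)$ debris reorganizes exactly into the displayed $\mathcal{A}_{D_\delta,1}$. This demands careful accounting of all contributions in the $\varphi_{0,\delta}$-direction (equivalently, the constant-function direction) coming from $\mathcal{U}_{k_c}$, from $(\mathcal{S}_{D_\delta}^{k_c})^{-1}[\chi]$, and from the $c_\omega-c_{k_c}$ mismatch, together with checking that after left multiplication by $\tfrac12\mathcal{I}-(\mathcal{K}_{D_\delta}^{k_c})^{*}$ they either cancel, collapse to $\mathcal{O}(\omega^{2})$, or assemble into the projection factor $(\mathcal{I}-\varepsilon_c\mathcal{P}_{\mathcal{H}_0^{*}})$; in carrying this out the sign and normalization conventions for $\widetilde{\mathcal{S}}_{D_\delta}^{0}$, for $a_{0,\delta}=\langle\varphi_{0,\delta},\varphi_{0,\delta}\rangle$, and for the pairing $\langle\cdot,\cdot\rangle_{\mathcal{H}^{*}(\partial D_\delta)}$ must be tracked consistently.
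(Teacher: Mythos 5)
Your overall strategy (insert \eqref{eq:asymplayer01}--\eqref{eq:asymplayer02} and \eqref{singinv} into \eqref{operleft}, use $\mathcal{L}_{D_\delta}\Upsilon=0$, $(\tfrac12\mathcal{I}-(\mathcal{K}_{D_\delta}^{0})^{*})\mathcal{U}_{k_c}=0$ and $(\tfrac12\mathcal{I}-(\mathcal{K}_{D_\delta}^{0})^{*})\mathcal{P}_{\mathcal{H}_0^{*}}=\tfrac12\mathcal{I}-(\mathcal{K}_{D_\delta}^{0})^{*}$, collect orders, then pass to $\delta\to0$ via Lemmas~\ref{lem3.2} and~\ref{le:app0101} for \eqref{expan02}) is the same as the paper's. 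But the step you yourself flag as ``the main obstacle'' --- showing that the $\omega^{2}\ln\omega$ terms assemble into the displayed $\mathcal{A}_{D_\delta,1}$ --- is not carried out, and the mechanism you sketch for it is inconsistent with your own decomposition. If you write $(\mathcal{S}_{D_\delta}^{k_c})^{-1}\mathcal{S}_{D_\delta}^{\omega}=\mathcal{I}+(\mathcal{S}_{D_\delta}^{k_c})^{-1}(\mathcal{S}_{D_\delta}^{\omega}-\mathcal{S}_{D_\delta}^{k_c})$, then $\mathcal{S}_{D_\delta,1}$ enters only through the difference, with the \emph{scalar} coefficient $\omega^{2}\ln\omega-k_c^{2}\ln k_c=(1-\varepsilon_c)\omega^{2}\ln\omega+\mathcal{O}(\omega^{2})$, while the $-(k_c^{2}\ln k_c)\mathcal{L}_{D_\delta}\mathcal{S}_{D_\delta,1}\mathcal{L}_{D_\delta}$ term of \eqref{singinv} acts on the constant-valued $\mathcal{O}(1)$ piece of the difference and is killed by the rightmost $\mathcal{L}_{D_\delta}$; so the ``competition'' you invoke to produce the operator factor $(\mathcal{I}-\varepsilon_c\mathcal{P}_{\mathcal{H}_0^{*}})$ cannot occur in your organization, which instead yields the factor $(1-\varepsilon_c)\mathcal{I}$. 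Likewise, in your route the $K$-part of the prefactor, $-\tfrac{1}{\varepsilon_c}(k_c^{2}\ln k_c)\mathcal{K}_{D_\delta,1}=-\omega^{2}\ln\omega\,\mathcal{K}_{D_\delta,1}+\mathcal{O}(\omega^{2})$, hits the identity and cancels the $+\omega^{2}\ln\omega\,\mathcal{K}_{D_\delta,1}$ coming from $\tfrac12\mathcal{I}+(\mathcal{K}_{D_\delta}^{\omega})^{*}$, so no term $\mathcal{K}_{D_\delta,1}(\mathcal{I}-\varepsilon_c\mathcal{P}_{\mathcal{H}_0^{*}})$ can emerge; the displayed $\mathcal{A}_{D_\delta,1}$ is therefore not reproduced by the argument as proposed.

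The source of the mismatch is precisely the $\varphi_{0,\delta}$-direction bookkeeping you claim ``drops out.'' The paper's proof keeps the zeroth-order right factor in the form $\mathcal{P}_{\mathcal{H}_0^{*}}+\mathcal{U}_{k_c}\widetilde{\mathcal{S}}_{D_\delta}^{0}+\mathcal{U}_{k_c}\Upsilon$ (obtained from $\mathcal{L}_{D_\delta}\widetilde{\mathcal{S}}_{D_\delta}^{0}=\mathcal{P}_{\mathcal{H}_0^{*}}$, $\mathcal{L}_{D_\delta}\Upsilon=0$) and lets the subsequent multiplication by $\tfrac12\mathcal{I}-(\mathcal{K}_{D_\delta}^{0})^{*}$ annihilate the $\mathcal{U}_{k_c}$-range pieces; it is exactly this choice (the $K_1$-part of the prefactor meeting $\mathcal{P}_{\mathcal{H}_0^{*}}$ rather than $\mathcal{I}$, and the $\mathcal{S}_{D_\delta,1}$ terms combining as $\mathcal{L}_{D_\delta}\mathcal{S}_{D_\delta,1}(\mathcal{I}-\varepsilon_c\mathcal{P}_{\mathcal{H}_0^{*}})$) that generates the projection factors in the lemma. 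Your reorganization collapses that factor to $\mathcal{I}+\mathcal{O}((\ln\omega)^{-1})$ because $\mathcal{U}_{k_c}\Upsilon_{\omega}$ is in fact $\mathcal{O}(1)$ (norm of $\Upsilon_{\omega}$ grows like $|\ln\omega|$ while $\mathcal{U}_{k_c}=\mathcal{O}((\ln\omega)^{-1})$), not small as your ``drop out'' claim assumes. So the two bookkeepings genuinely disagree at order $\omega^{2}\ln\omega$ in the $\varphi_{0,\delta}$ direction, and your proposal neither performs the computation nor reconciles it with the stated formula; asserting that the debris ``reorganizes exactly into $\mathcal{A}_{D_\delta,1}$'' is the missing core of the proof. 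The concluding $\delta$-limit argument for \eqref{expan02} is fine and matches the paper.
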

\begin{proof}
From (\ref{eq:asymplayer01}), (\ref{singinv}) and (\ref{s1}), we have that
\begin{align*}
&\mathcal {S}_{D_\delta}^{\omega}=\widetilde{\mathcal
{S}}_{D_\delta}^{0}+\Upsilon_k+(\omega^{2}\ln \omega)\mathcal{S}_{D_\delta,1}+\mathcal {O}(\omega^2),\\
&(\mathcal {S}_{D_\delta}^{k_c})^{-1}=\mathcal {L}_{D_\delta}+\mathcal
{U}_{k_c}-(\omega^{2}\ln\omega)\varepsilon_c\mathcal {L}_{D_\delta}\mathcal
{S}_{D_\delta,1}\mathcal {L}_{D_\delta}+\mathcal {O}(\omega^2).
\end{align*}
Noting the definition of $\Upsilon_k$ in (\ref{s1}), we see $\mathcal {L}_{D_\delta}\Upsilon_k=0$,
and then
\begin{align*}
(\mathcal {S}_{D_\delta}^{k_c})^{-1}\mathcal {S}_{D_\delta}^{\omega}=&\mathcal {P}_{\mathcal {H}_0^*(\partial
D_\delta)}+\mathcal {U}_{k_c}\widetilde{\mathcal {S}}_{D_\delta}^{0}+\mathcal {U}_{k_c}\Upsilon_{k_c}\\
&\ \ \ +(\omega^{2}\ln \omega)\mathcal {L}_{D_\delta}\mathcal
{S}_{D_\delta,1}\left(\mathcal {I}-\varepsilon_c\mathcal {P}_{\mathcal {H}_0^*(\partial
D_\delta)}\right)+\mathcal {O}(\omega^2).
\end{align*}
By using (\ref{re01}), it yields $\left(-\frac{1}{2}\mathcal {I}+(\mathcal
{K}_{D_\delta}^0)^*\right)\mathcal {U}_{k_c}=0$. Since $-\frac{1}{2}\mathcal {I}+(\mathcal
{K}_{D_\delta}^k)^*=\left(-\frac{1}{2}\mathcal {I}+(\mathcal
{K}_{D_\delta}^0)^*\right)+(k^{2}\ln k)\mathcal
{K}_{D_\delta,1}+\mathcal {O}(k^2)$, it deduces (\ref{expan01}). Furthermore, from Lemma \ref{lem3.2} and Lemma \ref{le:app0101}, we substitute the corresponding expansion back into (\ref{expan01}) and obtain (\ref{expan02}).
\end{proof}


Second, we have the following asymptotic expansion of $f$ given by (\ref{operright}) with respect to $\omega$ and $\delta$.

\begin{lem}\label{le:0103}
Let $f$ be defined by (\ref{operright}), we have
\beq\label{eq:eigenasy00}
f=-\mathrm{i}\omega(1-\varepsilon_c^{-1})d\cdot\nu(\Bx)+\mathcal{O}(\omega^2\ln\omega).
\eeq
\end{lem}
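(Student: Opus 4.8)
The plan is to substitute the small-$\omega$ expansions of the layer potential operators into the defining formula \eqref{operright} for $f$ and to Taylor-expand the incident plane wave. Since $u^i=e^{\mathrm{i}\omega\Bd\cdot\Bx}$ and the nanorod has diameter $\sim 1$, one has, uniformly on $\partial D_\delta$,
\begin{equation*}
u^i=\chi(\partial D_\delta)+\mathrm{i}\omega\,(\Bd\cdot\Bx)+\mathcal{O}(\omega^2),\qquad
\frac{\partial u^i}{\partial\nu}=\mathrm{i}\omega\,(\Bd\cdot\nu)+\mathcal{O}(\omega^2).
\end{equation*}
Recalling $k_c=\omega\sqrt{\varepsilon_c}$, Lemma~\ref{lem:a1} gives $\bigl(\mathcal{S}_{D_\delta}^{k_c}\bigr)^{-1}=\mathcal{L}_{D_\delta}+\mathcal{U}_{k_c}+\mathcal{O}(\omega^2\ln\omega)$ with $\mathcal{U}_{k_c}=\mathcal{O}((\ln\omega)^{-1})$, while \eqref{eq:asymplayer02} gives $\tfrac12\mathcal{I}-(\mathcal{K}_{D_\delta}^{k_c})^*=\bigl(\tfrac12\mathcal{I}-(\mathcal{K}_{D_\delta}^{0})^*\bigr)+\mathcal{O}(\omega^2\ln\omega)$; all of these hold uniformly in $\delta$ because $(\widetilde{\mathcal{S}}_{D_\delta}^{0})^{-1}$ is bounded uniformly in $\delta$.

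Next I would isolate the genuinely $\mathcal{O}(\omega)$ contribution of the second term of $f$. The constant part of $u^i$ drops out at leading order, since $\mathcal{L}_{D_\delta}[\chi(\partial D_\delta)]=\mathcal{P}_{\mathcal{H}_0^*(\partial D_\delta)}[\varphi_{0,\delta}]=0$, so only $\mathcal{U}_{k_c}[\chi(\partial D_\delta)]$ survives, and this is a multiple of $\varphi_{0,\delta}$ with coefficient $\mathcal{O}((\ln\omega)^{-1})$. Because $(\mathcal{K}_{D_\delta}^{0})^*\varphi_{0,\delta}=\tfrac12\varphi_{0,\delta}$ (recall $\lambda_{0,\delta}=\tfrac12$, cf. \eqref{re01}), applying $\tfrac12\mathcal{I}-(\mathcal{K}_{D_\delta}^{k_c})^*$ annihilates the leading term and leaves only $\mathcal{O}(\omega^2\ln\omega)\cdot\mathcal{O}((\ln\omega)^{-1})=\mathcal{O}(\omega^2)$; the same cancellation reduces the $\mathcal{U}_{k_c}$-contribution of $\mathrm{i}\omega(\Bd\cdot\Bx)$ to $\mathcal{O}(\omega^3)$, and the $\mathcal{O}(\omega^2)$ tail of $u^i$ contributes $\mathcal{O}(\omega^2)$ after the bounded operators act on it. Thus
\begin{equation*}
f=-\mathrm{i}\omega\,(\Bd\cdot\nu)-\frac{\mathrm{i}\omega}{\varepsilon_c}\Bigl(\tfrac12\mathcal{I}-(\mathcal{K}_{D_\delta}^{0})^*\Bigr)\mathcal{L}_{D_\delta}[\Bd\cdot\Bx]+\mathcal{O}(\omega^2\ln\omega).
\end{equation*}

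The remaining ingredient is the identity $\bigl(\tfrac12\mathcal{I}-(\mathcal{K}_{D_\delta}^{0})^*\bigr)\mathcal{L}_{D_\delta}[h]=-\partial_\nu h$ on $\partial D_\delta$ for every $h$ harmonic in $D_\delta$. I would argue: since $\mathcal{L}_{D_\delta}[h]\in\mathcal{H}_0^*(\partial D_\delta)$, one has $\mathcal{S}_{D_\delta}^{0}[\mathcal{L}_{D_\delta}[h]]=\widetilde{\mathcal{S}}_{D_\delta}^{0}[\mathcal{L}_{D_\delta}[h]]$; writing $(\widetilde{\mathcal{S}}_{D_\delta}^{0})^{-1}[h]=\mathcal{L}_{D_\delta}[h]+c\,\varphi_{0,\delta}$ and using $\widetilde{\mathcal{S}}_{D_\delta}^{0}[\varphi_{0,\delta}]=\chi(\partial D_\delta)$ shows that $w:=\mathcal{S}_{D_\delta}^{0}[\mathcal{L}_{D_\delta}[h]]$ equals $h-c$ on $\partial D_\delta$, and since $w$ is harmonic in $D_\delta$, uniqueness forces $w=h-c$ throughout $D_\delta$, so $\partial_\nu w|_-=\partial_\nu h$; the interior jump relation \eqref{eq:trace} then gives $\partial_\nu w|_-=\bigl(-\tfrac12\mathcal{I}+(\mathcal{K}_{D_\delta}^{0})^*\bigr)\mathcal{L}_{D_\delta}[h]$. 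Taking $h=\Bd\cdot\Bx$, which is harmonic with $\partial_\nu h=\Bd\cdot\nu$, yields $\bigl(\tfrac12\mathcal{I}-(\mathcal{K}_{D_\delta}^{0})^*\bigr)\mathcal{L}_{D_\delta}[\Bd\cdot\Bx]=-\Bd\cdot\nu$, whence
\begin{equation*}
f=-\mathrm{i}\omega\,(\Bd\cdot\nu)+\frac{\mathrm{i}\omega}{\varepsilon_c}\,(\Bd\cdot\nu)+\mathcal{O}(\omega^2\ln\omega)=-\mathrm{i}\omega\bigl(1-\varepsilon_c^{-1}\bigr)\,\Bd\cdot\nu+\mathcal{O}(\omega^2\ln\omega),
\end{equation*}
as claimed. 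The main obstacle I anticipate is the bookkeeping around the logarithmically large quantities $\mathcal{U}_{k_c}$ and $c_{k_c}$: one must check that, once composed with $\tfrac12\mathcal{I}-(\mathcal{K}_{D_\delta}^{k_c})^*$, they contribute only at order $\omega^2$ rather than leaving behind an $\mathcal{O}(\omega/\ln\omega)$ or $\mathcal{O}(\omega\ln\omega)$ term, and this hinges exactly on the eigen-relation $(\mathcal{K}_{D_\delta}^{0})^*\varphi_{0,\delta}=\tfrac12\varphi_{0,\delta}$; one must also confirm that each expansion invoked is uniform in the geometric parameter $\delta$.
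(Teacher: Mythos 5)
Your proposal is correct and follows essentially the same route as the paper: Taylor-expand the plane wave, insert the small-$\omega$ expansions of $(\mathcal{S}_{D_\delta}^{k_c})^{-1}$ and $\tfrac12\mathcal{I}-(\mathcal{K}_{D_\delta}^{k_c})^*$, kill the constant and $\mathcal{U}_{k_c}$ contributions via $(\mathcal{K}_{D_\delta}^{0})^*[\varphi_{0,\delta}]=\tfrac12\varphi_{0,\delta}$, and conclude with the identity $\bigl(\tfrac12\mathcal{I}-(\mathcal{K}_{D_\delta}^{0})^*\bigr)(\widetilde{\mathcal{S}}_{D_\delta}^{0})^{-1}[\Bd\cdot\Bx]=-\Bd\cdot\nu$. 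The only difference is that you supply a proof (via the interior Dirichlet problem and the jump relation) of this last identity, which the paper simply asserts.
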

\begin{proof}
From (\ref{operright}), for $\Bx\in\partial D_\delta$, and using the expansion
$e^{i\omega d\cdot \Bx}=1+\mathrm{i}\omega d\cdot \Bx+\mathcal{O}(\omega^2)$, we have
\begin{align*}
f(x)
&=-\omega \mathrm{i}d\cdot\nu(\Bx)+\mathcal{O}(\omega^2)\\
&\ \ \ -\varepsilon_c^{-1}\left(\left(\frac{1}{2}\mathcal{I}
-(\mathcal{K}_{D_\delta}^0)^*\right)-\omega^2\varepsilon_c\ln(\omega\sqrt{\varepsilon_c})
\mathcal{K}_{D_\delta,1}+\mathcal{O}(\omega^2\varepsilon_c)\right)(\mathcal {L}_{D_\delta}+\mathcal{U}_{k_c}\\
&\ \ \ \ \ \ -(\omega^{2}\ln\omega)\varepsilon_c\mathcal {L}_{D_\delta}\mathcal
{S}_{D_\delta,1}\mathcal {L}_{D_\delta}+\mathcal {O}(\omega^2))[1+\mathrm{i}\omega d\cdot \Bx+\mathcal{O}(\omega^2)]\\
&=-\omega \mathrm{i}d\cdot\nu(x)-\varepsilon_c^{-1}\left(\frac{1}{2}\mathcal{I}
-(\mathcal{K}_{D_\delta}^0)^*\right)\bigg(\mathcal{L}_{D_\delta}[1]+\mathcal{U}_{k_c}[1]\\
&\ \ \ \ \ +\mathcal{L}_{D_\delta}[\mathrm{i}\omega d\cdot \Bx]+\mathcal{U}_{k_c}[\mathrm{i}\omega d\cdot \Bx]\bigg)+\mathcal{O}(\omega^2\ln\omega).
\end{align*}
Since $\left(\frac{1}{2}\mathcal{I}-(\mathcal{K}_{D_\delta}^0)^*\right)
(\widetilde{\mathcal{S}}_{D_\delta}^{0})^{-1}[1]=0$ and $\left(\frac{1}{2}\mathcal{I}-(\mathcal{K}_{D_\delta}^0)^*\right)\mathcal{U}_{k_c}=0$, by applying Lemma \ref{lem3.2}, we get that
$$f=-\mathrm{i}\omega\left(d\cdot\nu(\Bx)+\varepsilon_c^{-1}\left(\frac{1}{2}\mathcal {I}-(\mathcal{K}_{D_\delta}^0)^*\right)(\widetilde{\mathcal{S}}_{D_\delta}^{0})^{-1}[d\cdot \Bx]\right)+\mathcal{O}(\omega^2\ln\omega).$$
Note that
$$
\left(\frac{1}{2}\mathcal {I}-(\mathcal{K}_{D_\delta}^0)^*\right)(\widetilde{\mathcal{S}}_{D_\delta}^{0})^{-1}[d\cdot \Bx]=-d\cdot \nu,
$$
we obtain the result \eqnref{eq:eigenasy00}.
\end{proof}

From the operator equation (\ref{operequ01}), using Lemma \ref{lem3.4} and Lemma \ref{le:0103}, there holds
\beq\label{opeq}
\left(\lambda(\varepsilon_c)\mathcal{I}-\mathcal{K}_{0}-\delta\mathcal{K}_{1}+\Ocal(\delta^2)\right)[\psi]=\omega f_1+\mathcal{O}(\omega^2\ln\omega),
\eeq
where $f_1=\mathrm{i}d\cdot\nu$.
Before proceeding, we need the spectral result of $\Kcal_0$. Define the operator $A_\delta$ by
\beq\label{eq:defAop01}
A_\delta[\psi](x_1):=\frac{1}{\pi}\int_{-L/2}^{L/2}\frac{\delta}{(x_1-y_1)^2+4\delta^2}\psi(y_1)dy_1, \quad \psi\in L^2(-L/2, L/2).
\eeq
We shall see that the operator $A_\delta$ is strongly related to the operator $\Kcal_0$.
The following result on the spectral of $A_\delta$ is given in \cite{fang2021}:
\begin{lem}[Lemma 3.2 in \cite{fang2021}]\label{le:Adel01}
Suppose $A_\delta$ is defined in \eqnref{eq:defAop01}, then it holds that
\beq\label{eq:eigenA01}
A_\delta[y_1^n](x_1)=
\frac{1}{2}x_1^n + o(1),  \quad \Bx\in \Gamma_{j}\setminus\big(\iota_{\delta}(P)\cup \iota_{\delta}(Q)\big), \quad n \geq 0.\\
\eeq
\end{lem}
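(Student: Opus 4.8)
The plan is to recognise the kernel of $A_\delta$ as (half of) a Poisson kernel and then to control the deviation of the resulting convolution from an approximate identity over the finite interval $(-L/2,L/2)$. Write $P_\varepsilon(t):=\tfrac{1}{\pi}\tfrac{\varepsilon}{t^2+\varepsilon^2}$ for the standard Poisson kernel, which satisfies $\int_{\RR}P_\varepsilon=1$. A direct computation gives $\tfrac{1}{\pi}\tfrac{\delta}{t^2+4\delta^2}=\tfrac12 P_{2\delta}(t)$, hence $A_\delta[\psi](x_1)=\tfrac12\int_{-L/2}^{L/2}P_{2\delta}(x_1-y_1)\psi(y_1)\,dy_1$. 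Using $\int_{\RR}P_{2\delta}=1$ one then has the exact identity
$$A_\delta[y_1^n](x_1)-\tfrac12 x_1^n=\tfrac12\int_{-L/2}^{L/2}P_{2\delta}(x_1-y_1)\bigl(y_1^n-x_1^n\bigr)\,dy_1-\tfrac{x_1^n}{2}\int_{\RR\setminus(-L/2,L/2)}P_{2\delta}(x_1-y_1)\,dy_1,$$
so it suffices to show that both terms on the right are $o(1)$ as $\delta\to0$, uniformly for $x_1$ with $\operatorname{dist}(x_1,\{\pm L/2\})\gg\delta$ — which is precisely the content of the restriction $\Bx\in\Gamma_{j}\setminus(\iota_\delta(P)\cup\iota_\delta(Q))$.

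For the tail term I would use $\int_{|t|>a}P_{2\delta}(t)\,dt=\tfrac{2}{\pi}\arctan\tfrac{2\delta}{a}\le\tfrac{4\delta}{\pi a}$; applied to the two half-lines $\{y_1<-L/2\}$ and $\{y_1>L/2\}$ with $a=\operatorname{dist}(x_1,\{\pm L/2\})$ this gives a bound $\mathcal{O}(\delta/a)=o(1)$ on the admissible set. This estimate also explains why the statement must exclude the $\mathcal{O}(\delta)$-neighbourhoods of $P$ and $Q$: there a non-vanishing fraction of the Poisson mass falls outside $(-L/2,L/2)$, so $A_\delta[1]$ genuinely drifts away from $\tfrac12$. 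For the first term, since $x_1,y_1\in[-L/2,L/2]$ we have $|y_1^n-x_1^n|\le nL^{n-1}|y_1-x_1|$, whence
$$\Bigl|\tfrac12\int_{-L/2}^{L/2}P_{2\delta}(x_1-y_1)(y_1^n-x_1^n)\,dy_1\Bigr|\le\frac{nL^{n-1}}{2}\int_{|t|\le L}P_{2\delta}(t)\,|t|\,dt=\frac{nL^{n-1}\delta}{\pi}\,\ln\frac{L^2+4\delta^2}{4\delta^2}=\mathcal{O}(\delta\ln(1/\delta)),$$
by the elementary primitive of $t/(t^2+4\delta^2)$. Adding the two bounds yields $A_\delta[y_1^n](x_1)-\tfrac12x_1^n=\mathcal{O}(\delta\ln(1/\delta))=o(1)$, which is \eqnref{eq:eigenA01}. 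Equivalently and more softly, one may simply invoke that $g\mapsto P_{2\delta}\ast g$ is an approximate identity and that $P_{2\delta}\ast g\to g$ uniformly on compact subsets of $(-L/2,L/2)$ for every $g\in C([-L/2,L/2])$, applied to $g(y_1)=y_1^n$, together with the tail estimate to cover points at distance only $\gg\delta$ from the endpoints.

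The two integral estimates are entirely routine; the only genuine subtlety — and the reason the error is recorded merely as $o(1)$ rather than a clean power of $\delta$ — is the interplay between the finite cut-off and the slowly decaying Poisson tails, which both forces the exclusion of the endpoint caps and produces the mild logarithmic loss. I expect the main obstacle in a fully rigorous write-up to be pinning down the quantitative meaning of $\iota_\delta(P),\iota_\delta(Q)$ so that $\operatorname{dist}(x_1,\{\pm L/2\})/\delta\to\infty$ holds uniformly over the complement, and then verifying that the resulting $o(1)$ is indeed uniform in $x_1$ there.
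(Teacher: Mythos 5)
Your proposal is correct. Note that the paper itself does not prove this lemma: it is quoted verbatim from \cite{fang2021} (Lemma 3.2 there), so there is no in-text argument to compare against; the cited proof proceeds by essentially the same direct evaluation of the integral (the kernel concentrates at $y_1=x_1$ with total mass $\tfrac12$, the explicit $\arctan$ primitive controlling the endpoint loss), which is exactly the Poisson-kernel/approximate-identity reading you give. Your two estimates are accurate: the tail bound $\tfrac{2}{\pi}\arctan\tfrac{2\delta}{a}=\mathcal{O}(\delta/a)$ explains both the necessity of deleting $\iota_\delta(P)\cup\iota_\delta(Q)$ and the shape of the refined statement \eqref{eq:eigenA02}, and the Lipschitz bound on $y_1^n-x_1^n$ gives $\mathcal{O}(\delta\ln(1/\delta))$ for the interior term, which is more than enough for the claimed $o(1)$. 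Two small remarks: the logarithmic loss is an artifact of the crude bound $|y_1^n-x_1^n|\lesssim|y_1-x_1|$ — expanding $y_1^n-x_1^n=nx_1^{n-1}(y_1-x_1)+\mathcal{O}((y_1-x_1)^2)$ and using the evenness of $P_{2\delta}$ reduces this term to $\mathcal{O}(\delta)$, matching the sharper remark following the lemma; and the uniformity issue you flag (whether $\operatorname{dist}(x_1,\{\pm L/2\})/\delta\to\infty$ on the complement of $\iota_\delta(P)\cup\iota_\delta(Q)$) is real, since the paper's definition of $\iota_\delta$ via ``$|P-\Bz_x|=\Ocal(\delta)$'' is deliberately loose, but your error bound $\mathcal{O}(\delta/a)+\mathcal{O}(\delta\ln(1/\delta))$ is exactly the quantitative content needed under any reasonable reading, so the argument stands.
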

\begin{rem}
By following the proof in \cite{fang2021}, together with asymptotic analysis, one can get more accurate result than \eqnref{eq:eigenA01}, which is
\beq\label{eq:eigenA02}
\begin{split}
A_\delta[y_1^n](x_1)=&
\frac{1}{2}x_1^n + \chi(\iota_{\delta^\epsilon}(P)\cup \iota_{\delta^\epsilon}(Q))\Ocal(\delta^{1-\epsilon})+\Ocal(\delta),  \quad 0<\epsilon<1, \\
  &\quad\quad\quad\quad \Bx\in \Gamma_{j}\setminus\big(\iota_{\delta}(P)\cup \iota_{\delta}(Q)\big), \quad n \geq 0.
\end{split}
\eeq
\end{rem}
With the above results on hand, we can show the following asymptotic result for the density $\psi$.

\begin{lem}\label{le:main01}
Suppose $\psi$ is the solution of \eqnref{operequ01}, then one has
\beq\label{eq:asymvphi01}
\psi(\Bx)=\left\{
\begin{array}{l}
(-1)^{j}\omega\mathrm{i}d_2\left(\lambda(\varepsilon_c)I+A_\delta\right)^{-1}[1] +\omega\chi(\iota_{\delta^\epsilon}(P)\cup \iota_{\delta^\epsilon}(Q))\Ocal(\delta^{2(1-\epsilon)})\\
 \quad\quad\quad\quad\quad\quad +\Ocal(\omega\delta^2)+\mathcal{O}(\omega^2\ln\omega), \quad \quad\quad\quad\quad\Bx\in \Gamma_{j}\setminus(\iota_{\delta}(P)\cup \iota_{\delta}(Q)),\medskip \\
\omega\mathrm{i}(\lambda(\varepsilon_c) \mathcal{I} -\Kcal_1^*)^{-1}\left[d\cdot\nu(\Bx)\right]+\omega\cdot o(1)_\delta+\mathcal{O}(\omega^2\ln\omega), \quad\quad \Bx\in S_\delta^a\cup \iota_{\delta}(P), \medskip\\
\omega\mathrm{i}(\lambda(\varepsilon_c) \mathcal{I} -\Kcal_2^*)^{-1}\left[d\cdot\nu(\Bx)\right]+\omega\cdot o(1)_\delta+\mathcal{O}(\omega^2\ln\omega), \quad\quad \Bx\in S_\delta^b\cup \iota_{\delta}(Q),
\end{array}
\right.
\eeq
where the operators $\Kcal_1^*$ and $\Kcal_2^*$ are defined by
\beq\label{eq:remtm01}
\begin{split}
\Kcal_1^*[\varphi_1](\Bx):=&\int_{S_\delta^a\cup \iota_{\delta}(P)}\frac{\la\Bx-\By,\nu_x\ra}{|\Bx-\By|^2}\varphi_1(\By)d\sigma(\By),\\ 
\Kcal_2^*[\varphi_2](\Bx):=&\int_{S_\delta^b\cup \iota_{\delta}(Q)}\frac{\la\Bx-\By,\nu_x\ra}{|\Bx-\By|^2}\varphi_2(\By)d\sigma(\By),
\end{split}
\eeq
respectively.
\end{lem}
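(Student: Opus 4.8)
The starting point is the reduced integral equation \eqref{opeq}, namely
\[
\bigl(\lambda(\varepsilon_c)\mathcal{I}-\mathcal{K}_0-\delta\mathcal{K}_1+\mathcal{O}(\delta^2)\bigr)[\psi]=\omega\,\mathrm{i}\,d\cdot\nu+\mathcal{O}(\omega^2\ln\omega),
\]
so that, writing $\psi=\omega\widehat\psi$, it suffices to expand $\widehat\psi$, which solves the same equation with right-hand side $\mathrm{i}\,d\cdot\nu+\mathcal{O}(\omega\ln\omega)$ and satisfies $\|\widehat\psi\|=\mathcal{O}(1)$ uniformly in $\delta$ (in the regime considered the governing operator is boundedly invertible since $\Im\varepsilon_c>0$ keeps $\lambda(\varepsilon_c)$ off the real axis). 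The plan is to use the partition $\partial D_\delta=\Gamma_1^\circ\cup\Gamma_2^\circ\cup\bigl(S_\delta^a\cup\iota_\delta(P)\bigr)\cup\bigl(S_\delta^b\cup\iota_\delta(Q)\bigr)$, with $\Gamma_j^\circ:=\Gamma_{j,\delta}\setminus(\iota_\delta(P)\cup\iota_\delta(Q))$, and to read off from \eqref{eq:leasyNP02} that $\lambda(\varepsilon_c)\mathcal{I}-\mathcal{K}_0$ is, modulo $o(1)_\delta$ off-diagonal couplings, block diagonal for this partition: the ``flat block'' $\Gamma_1^\circ\cup\Gamma_2^\circ$ is governed by the operator $A_\delta$ of \eqref{eq:defAop01}, and each ``cap block'' by $\mathcal{K}_1^*$, resp.\ $\mathcal{K}_2^*$, of \eqref{eq:remtm01}. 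One then solves each block and estimates the couplings and remainders against $\|\widehat\psi\|=\mathcal{O}(1)$.

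\smallskip\noindent\emph{Flat block.} For $\Bx\in\Gamma_j^\circ$ the only surviving term in \eqref{eq:leasyNP02} is $\mathcal{A}_{\Gamma_{3-j,\delta},\Gamma_{j,\delta}}$, and for the straight nanorod the parametrisation $\By^{(i)}(y_1)=\Gamma_0(y_1)+(-1)^i\delta\,\Bn$ gives $|\Bx-\By|^2=(x_1-y_1)^2+4\delta^2$, whence $\mathcal{A}_{\Gamma_{3-j,\delta},\Gamma_{j,\delta}}[\psi]\big|_{\Gamma_j^\circ}(x_1)=A_\delta[\psi|_{\Gamma_{3-j,\delta}}](x_1)$ with $A_\delta$ exactly as in \eqref{eq:defAop01}. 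Since $\nu=(-1)^j\Bn$ on $\Gamma_{j,\delta}$, the source $\mathrm{i}\,d\cdot\nu|_{\Gamma_j^\circ}=(-1)^j\mathrm{i}\,d_2$ is purely antisymmetric across the two sheets; the flat restriction of the equation is then the $2\times2$ system in which $\mathcal{K}_0$ exchanges the sheets and applies $A_\delta$, and inverting it -- equivalently, inverting $\lambda(\varepsilon_c)\mathcal{I}\pm A_\delta$ on the antisymmetric/symmetric subspaces, which is legitimate because $A_\delta$ is self-adjoint with $\|A_\delta\|\le\tfrac12$ and $\lambda(\varepsilon_c)$ avoids the relevant spectra -- yields $\psi|_{\Gamma_j^\circ}=(-1)^j\omega\,\mathrm{i}\,d_2\bigl(\lambda(\varepsilon_c)\mathcal{I}+A_\delta\bigr)^{-1}[1]$ to leading order, the symmetric part being driven only by remainders. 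The remainders are: the $\delta\mathcal{K}_1$ term, bounded by \eqref{eq:descrK101} (times $\|\psi\|=\mathcal{O}(\omega)$) as $\omega\chi(\iota_{\delta^\epsilon}(P)\cup\iota_{\delta^\epsilon}(Q))\mathcal{O}(\delta^{2(1-\epsilon)})+\mathcal{O}(\omega\delta^2)$; the $\mathcal{O}(\delta^2)$ operator remainder of \eqref{opeq}, giving $\mathcal{O}(\omega\delta^2)$; the error of replacing $\psi|_{\Gamma_{3-j,\delta}}$ by its values on $\Gamma_{3-j}^\circ$ inside $A_\delta$, which is a Poisson-type integral over the length-$\mathcal{O}(\delta)$ set $\iota_\delta(P)\cup\iota_\delta(Q)$ and is again $\omega\chi(\iota_{\delta^\epsilon})\mathcal{O}(\delta^{2(1-\epsilon)})+\mathcal{O}(\omega\delta^2)$ by a graded estimate; and the $\mathcal{O}(\omega^2\ln\omega)$ of \eqref{opeq}. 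Summed, these reproduce the error displayed in the first case of \eqref{eq:asymvphi01}.

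\smallskip\noindent\emph{Cap blocks.} For $\Bx\in S_\delta^a\cup\iota_\delta(P)$, \eqref{eq:leasyNP02} shows that the part of $\mathcal{K}_0[\psi](\Bx)$ depending on $\psi|_{S_\delta^a\cup\iota_\delta(P)}$ is exactly the self-interaction $\tfrac1{2\pi}\int_{S_\delta^a\cup\iota_\delta(P)}\tfrac{\la\Bx-\By,\nu_x\ra}{|\Bx-\By|^2}\psi(\By)\,d\sigma(\By)=\mathcal{K}_1^*[\psi]$ (with the Euclidean normalisation $1/(2\pi)$ of \eqref{eq:defbnd01} absorbed into $\mathcal{K}_1^*$; in particular the semicircular self-term $\tfrac1{4\pi}\int_{S^a}\tilde\psi\,d\tilde\sigma$ coincides with $\tfrac1{2\pi}\int_{S_\delta^a}\tfrac{\la\Bx-\By,\nu_x\ra}{|\Bx-\By|^2}\psi\,d\sigma$, since $\tfrac{\la\Bx-\By,\nu_x\ra}{|\Bx-\By|^2}\equiv\tfrac1{2\delta}$ on the arc of radius $\delta$). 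The remaining $\mathcal{O}(1)$ term is the coupling $\mathcal{K}_{S_\delta^f,S_\delta^a}[\psi]$ to the far flat part; here the crucial point is that, by the flat-block analysis, $\psi|_{\Gamma_1^\circ}=-\psi|_{\Gamma_2^\circ}$ up to $\mathcal{O}(\omega\delta^{2(1-\epsilon)})$, and since $\Gamma_{1,\delta},\Gamma_{2,\delta}$ are $2\delta$ apart, pairing the two sheets turns the coupling kernel into a difference of size $\mathcal{O}(\delta/|\Bx-\By|^2)$, whose integral against $\psi$ is $\omega\cdot o(1)_\delta$; the $\delta\mathcal{K}_1$ cap-to-cap terms of \eqref{eq:leasyNP03} are likewise $\mathcal{O}(\omega\delta)$. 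Hence on the extended cap the equation collapses to $\bigl(\lambda(\varepsilon_c)\mathcal{I}-\mathcal{K}_1^*\bigr)[\psi]=\omega\,\mathrm{i}\,d\cdot\nu+\omega\cdot o(1)_\delta+\mathcal{O}(\omega^2\ln\omega)$, and inverting $\lambda(\varepsilon_c)\mathcal{I}-\mathcal{K}_1^*$ gives the second case of \eqref{eq:asymvphi01}; the region $S_\delta^b\cup\iota_\delta(Q)$ is handled identically with $\mathcal{K}_2^*$.

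\smallskip\noindent\emph{Main obstacle.} The principal difficulty is the analysis near the junctions $P$ and $Q$: there $\mathcal{K}_0$ carries the extra terms $\chi(\iota_\delta(P))\mathcal{K}_{S_\delta^a,S_\delta^f}$ and $\chi(\iota_\delta(Q))\mathcal{K}_{S_\delta^b,S_\delta^f}$, and one must match -- with graded estimates of order $\mathcal{O}(\delta^2/\operatorname{dist}(\Bx,\{P,Q\})^2)$ -- the true $\Gamma_{j,\delta}$- and $S_\delta^{a,b}$-integrals against the truncated operators $A_\delta,\mathcal{K}_1^*,\mathcal{K}_2^*$; this matching is exactly what produces the $\chi(\iota_{\delta^\epsilon})\mathcal{O}(\delta^{2(1-\epsilon)})$ corrections. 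A secondary point is that the block decoupling is only approximate, so the argument is organised in two steps -- first the flat equation, then the cap equations -- using the a priori bound $\|\widehat\psi\|=\mathcal{O}(1)$ to ensure the cross-couplings (cap-to-flat only through $\delta\mathcal{K}_1$, flat-to-cap only through the $\mathcal{O}(\delta)$ two-sheet cancellation above) are genuinely lower order, together with boundedness of the resolvents $(\lambda(\varepsilon_c)\mathcal{I}+A_\delta)^{-1}$ and $(\lambda(\varepsilon_c)\mathcal{I}-\mathcal{K}_j^*)^{-1}$.
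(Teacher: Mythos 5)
Your proposal is correct and follows essentially the same route as the paper: restrict \eqref{opeq} to the two flat sheets using the expansion \eqref{eq:leasyNP02}--\eqref{eq:descrK101}, exploit the antisymmetry of the leading density across $\Gamma_{1,\delta}$ and $\Gamma_{2,\delta}$ (your symmetric/antisymmetric splitting is exactly the paper's relation $\psi^{(0)}(x_1,-\delta)=-\psi^{(0)}(x_1,\delta)$) to invert $\lambda(\varepsilon_c)\mathcal{I}+A_\delta$, and then feed the flat-part result back into the equation on $S_\delta^a\cup\iota_\delta(P)$ and $S_\delta^b\cup\iota_\delta(Q)$ to reduce to $\lambda(\varepsilon_c)\mathcal{I}-\Kcal_j^*$. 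Your treatment of the cross-couplings and junction errors is, if anything, more explicit than the paper's.
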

\begin{proof}
From (\ref{opeq}), by using \eqnref{eq:leasyNP02}, \eqnref{eq:leasyNP01} and \eqnref{eq:descrK101},
one can show
\beq\label{eq:bdint1202}
\begin{split}
&\lambda(\varepsilon_c)\psi(x_1,-\delta)-\frac{1}{\pi}\int_{-L/2}^{L/2}\frac{\delta}{(x_1-y_1)^2+4\delta^2}\psi(y_1,\delta)dy_1\\
&+\Big(\chi(\iota_{\delta^\epsilon}(P)\cup \iota_{\delta^\epsilon}(Q))\Ocal(\delta^{2(1-\epsilon)}) +\Ocal(\delta^2)\Big)\psi(x_1,\delta)\\
=&\omega\mathrm{i}d_2+\mathcal{O}(\omega^2\ln\omega),\ \ \ \quad 0<\epsilon<1, \quad \Bx\in \Gamma_{1}\setminus \big(\iota_{\delta}(P)\cup \iota_{\delta}(Q)\big),
\end{split}
\eeq
and
\beq\label{eq:bdint120202}
\begin{split}
&\lambda(\varepsilon_c)\psi(x_1,\delta)-\frac{1}{\pi}\int_{-L/2}^{L/2}\frac{\delta}{(x_1-y_1)^2+4\delta^2}\psi(y_1,-\delta)dy_1\\
&+\Big(\chi(\iota_{\delta^\epsilon}(P)\cup \iota_{\delta^\epsilon}(Q))\Ocal(\delta^{2(1-\epsilon)}) +\Ocal(\delta^2)\Big)\psi(x_1,-\delta)\\
=&-\omega\mathrm{i}d_2+\mathcal{O}(\omega^2\ln\omega),\ \ \ \quad 0<\epsilon<1, \quad \Bx\in \Gamma_{2}\setminus \big(\iota_{\delta}(P)\cup \iota_{\delta}(Q)\big),
\end{split}
\eeq
One can thus decompose the density $\psi$ into
\beq\label{eq:decompsi01}
\begin{split}
\psi(x_1,(-1)^j\delta)=&\omega\Big(\psi^{(0)}(x_1,(-1)^j\delta)\\
&\quad+\delta^{2(1-\epsilon)}\chi(\iota_{\delta^\epsilon}(P)\cup \iota_{\delta^\epsilon}(Q))\psi^{(1)}(x_1,(-1)^j\delta)+\Ocal(\delta^2)\Big)+\Ocal(\omega^2\ln\omega),
\end{split}
\eeq
for $|x_1|\leq L/2-\Ocal(\delta)$.
Furthermore, from \eqnref{eq:bdint1202} and \eqnref{eq:bdint120202}, one has
\beq\label{eq:relapsi01}
\psi^{(0)}(x_1,-\delta)=-\psi^{(0)}(x_1,\delta), \quad |x_1|\leq L/2-\Ocal(\delta).
\eeq
Thus one can derive that
\beq\label{psisum01}
\begin{split}
\psi(x_1,-\delta)=&\omega\mathrm{i}d_2\left(\lambda(\varepsilon_c)I+A_\delta\right)^{-1}[1]
+\omega\chi(\iota_{\delta^\epsilon}(P)\cup \iota_{\delta^\epsilon}(Q))\Ocal(\delta^{2(1-\epsilon)}) \\
& +\Ocal(\omega\delta^2) +\mathcal{O}(\omega^2\ln\omega),\ \ \ \ \ \text{for}\ |x_1|\leq L/2-\Ocal(\delta).
\end{split}
\eeq
Similarly, we obtain
\beq\label{psi01}
\begin{split}
\psi(x_1,\delta)=&-\omega\mathrm{i}d_2\left(\lambda(\varepsilon_c)I+A_\delta\right)^{-1}[1]
 +\omega\chi(\iota_{\delta^\epsilon}(P)\cup \iota_{\delta^\epsilon}(Q))\Ocal(\delta^{2(1-\epsilon)}) \\
& +\Ocal(\omega\delta^2) +\mathcal{O}(\omega^2\ln\omega),\ \ \ \ \ \text{for}\ |x_1|\leq L/2-\Ocal(\delta).
\end{split}
\eeq
Thus one obtains the first equation in \eqnref{eq:asymvphi01}.
For $\Bx\in S^a\cup \iota_\delta(P)$, by making use of \eqnref{opeq}, \eqnref{eq:eigenA01}, together with the first equation in \eqnref{eq:asymvphi01} one has
\begin{align*}
(\lambda(\varepsilon_c)\mathcal{I} -\Kcal_1^*)[\varphi](\Bx)&= \omega\mathrm{i}d\cdot\nu(\Bx)+\omega\cdot o(1)_\delta+\mathcal{O}(\omega^2\ln\omega), \quad \Bx\in S_\delta^a\cup\iota_\delta(P).
\end{align*}
In a similar manner, one can show that
\begin{align*}
(\lambda(\varepsilon_c)\mathcal{I} -\Kcal_2^*)[\varphi](\Bx)&= \omega\mathrm{i}d\cdot\nu(\Bx) +\omega\cdot o(1)_\delta+\mathcal{O}(\omega^2\ln\omega), \quad \Bx\in S_\delta^b\cup\iota_\delta(Q).
\end{align*}
and so the last two equations in \eqnref{eq:asymvphi01} follows.
\end{proof}
We are now in the position of proving the first main result Theorem \ref{eq:thmanin01}.
\begin{proof}[Proof of Theorem \ref{eq:thmanin01}]
For the sake of simplicity we use the notation $\Gl$ and omit its dependence. By using \eqnref{eq:asyHank01}, \eqnref{solu01} and Taylor's expansion along with $\Gamma_0$, we have that
\beq\label{eq:thpf01}
\begin{split}
u(\Bx)=&u^i(\Bx)+ \int_{S_\delta^f\setminus(\iota_{\delta}(P)\cup \iota_{\delta}(Q))}G_0(\Bx-\Bz_y)\psi(\By)\,d\sigma(\By) \\
&\quad \quad \ \,+ \delta \int_{S_\delta^f\setminus(\iota_{\delta}(P)\cup \iota_{\delta}(Q))}\nabla_\By G_0(\Bx-\Bz_y)\cdot \nu_\By \psi(\By)\,d\sigma(\By)\\
&\quad \quad \ \, + \int_{S_\delta^a\cup \iota_\delta(P)} G_0(\Bx-\Bz_y) \psi(\By)\,d\sigma(\By)+ \int_{S_\delta^b\cup \iota_\delta(Q)} G_0(\Bx-\Bz_y) \psi(\By)\,d\sigma(\By)+\Ocal(\delta^2\omega^2\ln\omega).
\end{split}
\eeq
Furthermore, by substituting the asymptotic expansion of $\psi$ in \eqnref{eq:asymvphi01} into above equation, and using the fact that (see Lemma 3.3 in \cite{fang2021})
\[
\begin{split}
\int_{S_\delta^a\cup \iota_\delta(P)}  \psi(\By)\,d\sigma(\By)=&-2\mathrm{i}\omega\delta\Big(\lambda(\varepsilon_c)-\frac{1}{2}\Big)^{-1} d_1+\omega o(\delta), \\
\int_{S_\delta^b\cup \iota_\delta(Q)}  \psi(\By)\,d\sigma(\By)=&2\mathrm{i}\omega\delta\Big(\lambda(\varepsilon_c)-\frac{1}{2}\Big)^{-1} d_1+\omega o(\delta),
\end{split}
\]
one can derive that
\beq\label{eq:thpf02}
\begin{split}
&\int_{S_\delta^a\cup \iota_\delta(P)} G_0(\Bx-\Bz_y) \psi(\By)\,d\sigma(\By)
=\int_{S_\delta^a\cup \iota_\delta(P)} (G_0(\Bx-P) +\Ocal(\delta))\psi(\By)\,d\sigma(\By)\\
=&-\omega\delta\frac{\mathrm{i}}{\pi}\Big(\lambda(\varepsilon_c)-\frac{1}{2}\Big)^{-1} d_1\ln|\Bx-P|+\omega o(\delta)\\
=&-\omega\delta\frac{\mathrm{i}}{2\pi}\Big(\lambda(\varepsilon_c)-\frac{1}{2}\Big)^{-1} d_1\ln((x_1-L/2)^2+x_2^2)+\omega o(\delta),
\end{split}
\eeq
and
\beq\label{eq:thpf02}
\begin{split}
&\int_{S^b\cup \iota_\delta(Q)} G_0(\Bx-\Bz_y) \psi(\By)\,d\sigma(\By)\\
=&\omega\delta\frac{\mathrm{i}}{2\pi}\Big(\lambda(\varepsilon_c)-\frac{1}{2}\Big)^{-1} d_1\ln((x_1+L/2)^2+x_2^2)+\omega o(\delta),
\end{split}
\eeq
Finally, by using \eqnref{eq:asymvphi01}, there holds
\beq\label{eq:thpf03}
\begin{split}
&\int_{S_\delta^f\setminus(\iota_{\delta}(P)\cup \iota_{\delta}(Q))} G_0(\Bx-\Bz_y) \psi(\By)\,d\sigma(\By)=\omega o(\delta),
\end{split}
\eeq
and
\beq\label{eq:thpf0301}
\begin{split}
&\int_{S_\delta^f\setminus(\iota_{\delta}(P)\cup \iota_{\delta}(Q))} \nabla_\By G_0(\Bx-\Bz_y)\cdot \nu_\By \psi(\By)\,d\sigma(\By)\\
=&\frac{\omega\mathrm{i}}{2\pi}\delta d_2\int_{-L/2}^{L/2}\frac{x_2}{(x_1-y_1)^2+x_2^2}(\lambda(\varepsilon_c) I+A_\delta)^{-1}[1](y_1)dy_1+\omega o(\delta),
\end{split}
\eeq
By combing the above results one can finally obtain \eqnref{eq:thmanin0101}.
\end{proof}

\section{Resonance analysis of the nanorod}

In this section, according to definition \ref{depr}, we proceed to analyze the plasmon resonance of the scattering system \eqref{eq:helm01}. It is worth emphasizing that we do not analyze the plasmon resonance by utilizing the expansion formula of the scattering field (\ref{eq:thmanin0101}) directly. We just use some estimation of scattering field and the relevant operators.
First, we derive the gradient estimate of the scattering field $u^s$ outside the nanorod $D_\delta$.
%

\begin{lem}\label{lem4.1}
Let $\psi=\psi_c+a_{0,\delta}^{-1}\langle\psi,\varphi_{0,\delta}\rangle\varphi_{0,\delta}$, where $\psi_c\in\mathcal{H}_0^*(\partial D)$. Then for the scattering solution of (\ref{eq:helm01}), we have the following estimate
\begin{align}\label{4.1}
\left|\|\nabla u^s\|_{L^2(\mathbb{R}^2\setminus\overline{D_\delta})}^2-\|\psi_c\|^2\right|\lesssim\omega^2|\ln\omega|^2 a_{0,\delta}^{-1}\left|\langle\psi,\varphi_{0,\delta}\rangle\right|^2.
\end{align}
\end{lem}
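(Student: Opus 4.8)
The plan is to turn the exterior Dirichlet energy of $u^{s}=\mathcal{S}_{D_\delta}^{\omega}[\psi]$ into a boundary pairing on $\partial D_\delta$ via Green's identity, then to insert the low-frequency expansions of $\mathcal{S}_{D_\delta}^{\omega}$ and $(\mathcal{K}_{D_\delta}^{\omega})^{*}$ together with the $\mathcal{H}^{*}$-orthogonal splitting $\psi=\psi_c+a_{0,\delta}^{-1}\langle\psi,\varphi_{0,\delta}\rangle\varphi_{0,\delta}$, and finally to check that every contribution other than the one producing $\|\psi_c\|^{2}$ is controlled by the right-hand side of \eqnref{4.1}.

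First I would apply Green's first identity to $u^{s}$ on $B_{R}\setminus\overline{D_\delta}$, using $\Delta u^{s}+\omega^{2}u^{s}=0$ there, and let $R\to\infty$: by the Sommerfeld condition in \eqnref{eq:helm01} the integral over $\partial B_{R}$ converges to $\mathrm{i}\omega\|u^{s}_{\infty}\|_{L^{2}(\mathbb{S}^{1})}^{2}$, which is purely imaginary and therefore drops out after taking real parts, while the volume term is of order $\omega^{2}$ (relative to the near-field energy) and is retained in the remainder. Using the exterior trace formula \eqnref{eq:trace} with $u^{s}|_{\partial D_\delta}=\mathcal{S}_{D_\delta}^{\omega}[\psi]$ and $\partial_{\nu}u^{s}|_{+}=(\tfrac12\mathcal{I}+(\mathcal{K}_{D_\delta}^{\omega})^{*})[\psi]$, this yields
\begin{equation*}
\|\nabla u^{s}\|_{L^{2}(\mathbb{R}^{2}\setminus\overline{D_\delta})}^{2}
=-\Re\int_{\partial D_\delta}\overline{\mathcal{S}_{D_\delta}^{\omega}[\psi]}\,
\Big(\tfrac12\mathcal{I}+(\mathcal{K}_{D_\delta}^{\omega})^{*}\Big)[\psi]\,d\sigma
+\mathcal{O}(\omega^{2}).
\end{equation*}

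Next I would substitute $\mathcal{S}_{D_\delta}^{\omega}=\widetilde{\mathcal{S}}_{D_\delta}^{0}+\Upsilon_{\omega}+\mathcal{O}(\omega^{2}\ln\omega)$ from \eqnref{s1} and \eqnref{eq:asymplayer01}, and $(\mathcal{K}_{D_\delta}^{\omega})^{*}=(\mathcal{K}_{D_\delta}^{0})^{*}+\mathcal{O}(\omega^{2}\ln\omega)$ from \eqnref{eq:asymplayer02}, recalling that $\Upsilon_{\omega}$ acts only on the $\varphi_{0,\delta}$-component of its argument and is a scalar multiple of $\chi(\partial D_\delta)$ with coefficient of size $\mathcal{O}(|\ln\omega|\,|\langle\psi,\varphi_{0,\delta}\rangle|)$ (since $c_{\omega}=\mathcal{O}(\ln\omega)$). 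Expanding the boundary pairing along $\psi=\psi_c+a_{0,\delta}^{-1}\langle\psi,\varphi_{0,\delta}\rangle\varphi_{0,\delta}$ and using $(\mathcal{K}_{D_\delta}^{0})^{*}\varphi_{0,\delta}=\tfrac12\varphi_{0,\delta}$, the invariance of $\mathcal{H}_{0}^{*}(\partial D_\delta)$ under $(\mathcal{K}_{D_\delta}^{0})^{*}$ and $\widetilde{\mathcal{S}}_{D_\delta}^{0}[\psi_c]=\mathcal{S}_{D_\delta}^{0}[\psi_c]$, the pairing separates into three groups: (i) the purely $\psi_c$-diagonal piece $-\Re\int_{\partial D_\delta}\overline{\mathcal{S}_{D_\delta}^{0}[\psi_c]}\,(\tfrac12\mathcal{I}+(\mathcal{K}_{D_\delta}^{0})^{*})[\psi_c]\,d\sigma$, which (up to the admissible error) I identify with $\|\psi_c\|^{2}$ by the static transmission identity, the Calder\'on relation $\mathcal{K}_{D_\delta}^{0}\widetilde{\mathcal{S}}_{D_\delta}^{0}=\widetilde{\mathcal{S}}_{D_\delta}^{0}(\mathcal{K}_{D_\delta}^{0})^{*}$ and the definition \eqnref{inner01} of $\langle\cdot,\cdot\rangle$; (ii) the cross terms, each pairing an element of $\mathcal{H}_{0}^{*}(\partial D_\delta)$ (hence of zero mean on $\partial D_\delta$) against a constant on $\partial D_\delta$, and therefore vanishing, since $\int_{\partial D_\delta}(\tfrac12\mathcal{I}+(\mathcal{K}_{D_\delta}^{0})^{*})[\psi_c]\,d\sigma=0$ and $\int_{\partial D_\delta}\mathcal{S}_{D_\delta}^{0}[\psi_c]\,\varphi_{0,\delta}\,d\sigma=\int_{\partial D_\delta}\psi_c\,\mathcal{S}_{D_\delta}^{0}[\varphi_{0,\delta}]\,d\sigma=0$ (self-adjointness of $\mathcal{S}_{D_\delta}^{0}$ and $\mathcal{S}_{D_\delta}^{0}[\varphi_{0,\delta}]\equiv\mathrm{const}$ on $\partial D_\delta$); and (iii) the remaining pure $\varphi_{0,\delta}$-contribution together with the $\mathcal{O}(\omega^{2}\ln\omega)$ remainders and the $\mathcal{O}(\omega^{2})$ volume term. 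Group (iii) is exactly the source of the right-hand side of \eqnref{4.1}: the $\varphi_{0,\delta}$-direction carries the factor $a_{0,\delta}^{-1}\langle\psi,\varphi_{0,\delta}\rangle$ from the splitting and an $\mathcal{O}(|\ln\omega|\,|\langle\psi,\varphi_{0,\delta}\rangle|)$ factor from $\Upsilon_{\omega}$, while the $\omega^{2}$-order terms are bounded using the uniform bound on $(\widetilde{\mathcal{S}}_{D_\delta}^{0})^{-1}$ (the remark after Lemma \ref{lem:a1}) and the representation of $\mathcal{S}_{D_\delta}^{\omega}[\psi]$ just obtained; collecting these gives $\lesssim\omega^{2}|\ln\omega|^{2}a_{0,\delta}^{-1}|\langle\psi,\varphi_{0,\delta}\rangle|^{2}$, which is \eqnref{4.1}.

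The hard part will be step (iii): one has to track the $\varphi_{0,\delta}$-component of $\psi$ precisely through $\mathcal{S}_{D_\delta}^{\omega}$, $(\mathcal{K}_{D_\delta}^{\omega})^{*}$ and the exterior solution operator, pinning down the exact powers of $\omega$, $\ln\omega$ and $a_{0,\delta}$ in each error contribution so that they all fit under $\omega^{2}|\ln\omega|^{2}a_{0,\delta}^{-1}|\langle\psi,\varphi_{0,\delta}\rangle|^{2}$ (in particular, checking that the $\mathcal{O}(\omega^{2})$ volume term does not contribute beyond this order); a secondary but routine technical point is the far-field analysis in the first step, i.e. showing that the $\partial B_{R}$ boundary term is purely imaginary modulo $o(1)$.
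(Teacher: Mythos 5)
Your plan is essentially the paper's own proof: the divergence/Green identity over $B_R\setminus\overline{D_\delta}$ combined with the trace formula, the small-$\omega$ expansions of $\mathcal{S}_{D_\delta}^{\omega}$ and $(\mathcal{K}_{D_\delta}^{\omega})^{*}$, and the spectral splitting that isolates the $\varphi_{0,\delta}$-mode so that the cross terms vanish and all remainders are collected into $\omega^{2}|\ln\omega|^{2}a_{0,\delta}^{-1}|\langle\psi,\varphi_{0,\delta}\rangle|^{2}$. The only minor differences are that you remove the $\partial B_R$ contribution by taking real parts and invoking the far field, whereas the paper bounds it by Cauchy--Schwarz and lets $R\to\infty$, and that, exactly as in the paper, the ``identification'' of the $\psi_c$-diagonal pairing with $\|\psi_c\|^{2}$ is in truth the two-sided comparison $\sum_{j\ge1}\bigl(\tfrac12+\lambda_{j,\delta}\bigr)a_{j,\delta}^{-1}|\langle\psi_c,\varphi_{j,\delta}\rangle|^{2}\approx\|\psi_c\|^{2}$ rather than an equality.
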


\begin{proof}
Suppose $B_R$ is a sufficiently large disk such that $\overline{D_\delta}\subset B_R$, and $\check{\mathcal{S}}_{D_\delta}^{k_m}[\psi]=\mathcal{S}_{D_\delta}^{k_m}[\psi]
+\frac{\mathrm{i}}{4}c_{k_m}\langle1,\psi\rangle_{\partial D_\delta}$. By ultilizing the divergence theorem in $B_R\setminus\overline{D_\delta}$, the jump relation \eqref{eq:trace} and the Sommerfeld radiation condition, we have that
\begin{align*}
\int_{B_R\setminus\overline{D_\delta}}|\nabla u^s|^2dx= &\bar{k}_m^2\int_{B_R\setminus\overline{D_\delta}}|u^s|^2dx-\int_{\partial D_\delta}u^s\overline{\frac{\partial u^s}{\partial\nu}\Big|+}d\sigma+\int_{\partial B_R}u^s\overline{\frac{\partial u^s}{\partial R}}d\sigma\\
=&\bar{k}_m^2\int_{B_R\setminus\overline{D_\delta}}|\check{\mathcal{S}}_{D_\delta}^{k_m}[\psi]|^2dx-\int_{\partial D_\delta}\check{\mathcal{S}}_{D_\delta}^{k_m}[\psi]\overline{\left(\frac{1}{2}\mathcal{I}+(\mathcal{K}_{D_\delta}^{k_m})^*\right)[\psi]}d\sigma\\
&+\int_{\partial B_R}\check{\mathcal{S}}_{D_\delta}^{k_m}[\psi]\cdot\overline{\mathrm{i}k_m\check{\mathcal{S}}_{D_\delta}^{k_m}[\psi]+\mathcal{O}(R^{-\frac{3}{2}})}d\sigma.
\end{align*}
Notice (\ref{eq:asymplayer01}) and (\ref{eq:asymplayer02}), it implies
\begin{align}
\left|\int_{\partial D_\delta}\check{\mathcal {S}}_{D_\delta}^{k_m}[\psi]\overline{\left(\frac{1}{2}\mathcal{I}+(\mathcal{K}_{D_\delta}^{k_m})^*\right)[\psi]}d\sigma\right|
\leq\left|\int_{\partial D_\delta}\mathcal {S}_{D_\delta}^0[\psi]\overline{\left(\frac{1}{2}\mathcal{I}+(\mathcal{K}_{D_\delta}^0)^*\right)[\psi]}d\sigma\right|+\left|E\right|,
\end{align}
where
\begin{align}
E=&\int_{\partial D_\delta}\mathcal{S}_{D_\delta}^0[\psi]\overline{ \omega^{2}\ln\omega\mathcal{K}_{D_\delta,1}[\psi]+\omega^2\mathcal{O}(\|\psi\|)}d\sigma\\
&\ \ \ \ \ \ \ \ +\int_{\partial D_\delta}\left(\omega^{2}\ln\omega\mathcal
{S}_{D_\delta,1}[\psi]+\omega^{2}\mathcal
{S}_{D_\delta,2}[\psi]+\omega^4\ln\omega\mathcal{O}(\|\psi\|)\right)\overline{\left(\frac{1}{2}\mathcal
{I}+(\mathcal{K}_{D_\delta}^{k_m})^*\right)[\psi]}d\sigma\nonumber.
\end{align}
Since $\omega$ is small enough, it is easy to see that $|E|\lesssim |\omega\ln\omega|^2\|\psi\|^2$.

Next, we estimate $\left|\int_{\partial D_\delta}\mathcal {S}_{D_\delta}^0[\psi]\overline{\left(\frac{1}{2}\mathcal{I}+(\mathcal{K}_{D_\delta}^0)^*\right)[\psi]}d\sigma\right|$.
Since $(\mathcal{K}_{D_\delta}^0)^*[\varphi_{0,\delta}]=\frac{1}{2}\varphi_{0,\delta}$ and $\mathcal{S}_{D_\delta}^0[\varphi_{0,\delta}]=0$, for $\psi=\psi_c+a_{0,\delta}^{-1}\langle\psi,\varphi_{0,\delta}\rangle\varphi_{0,\delta}$, it follows that
\begin{align*}
&\int_{\partial D_\delta}\mathcal {S}_{D_\delta}^0[\psi]\overline{\left(\frac{1}{2}\mathcal{I}+(\mathcal{K}_{D_\delta}^0)^*\right)[\psi]}d\sigma\\
=&\int_{\partial D_\delta}\mathcal {S}_{D_\delta}^0[\psi_c]\overline{\left(a_{0,\delta}^{-1}\langle\psi,\varphi_{0,\delta}\rangle\varphi_{0,\delta}+\left(\frac{1}{2}\mathcal{I}+(\mathcal{K}_{D_\delta}^0)^*\right)[\psi_c]\right)}d\sigma\\
=&\int_{\partial D_\delta}\mathcal{S}_{D_\delta}^0\left[\sum_{j=1}^\infty a_{j,\delta}^{-1}\langle\psi_c,\varphi_{j,\delta}\rangle\varphi_{j,\delta}\right]\overline{\left(a_{0,\delta}^{-1}\langle\psi,\varphi_{0,\delta}\rangle\varphi_{0,\delta}
+\left(\frac{1}{2}\mathcal{I}+(\mathcal{K}_{D_\delta}^0)^*\right)\left[\sum_{l=1}^\infty a_{l,\delta}^{-1}\langle\psi_c,\varphi_{l,\delta}\rangle\varphi_{l,\delta}\right]\right)}d\sigma\\
=&\int_{\partial D_\delta}\sum_{j=1}^\infty a_{j,\delta}^{-1}\langle\psi_c,\varphi_{j,\delta}\rangle\mathcal{S}_{D_\delta}^0[\varphi_{j,\delta}]\overline{\left(a_{0,\delta}^{-1}\langle\psi,\varphi_{0,\delta}\rangle\varphi_{0,\delta}
+\sum_{l=1}^\infty a_{l,\delta}^{-1}\langle\psi_c,\varphi_{l,\delta}\rangle\left(\frac{1}{2}+\lambda_{l,\delta}\right)\varphi_{l,\delta}\right)}d\sigma\\
=&\sum_{j=1}^\infty a_{0,\delta}^{-1}\overline{\langle\psi,\varphi_{0,\delta}\rangle}a_{j,\delta}^{-1}\langle\psi_c,\varphi_{j,\delta}\rangle\int_{\partial D_\delta}\mathcal{S}_{D_\delta}^0[\varphi_{j,\delta}]\overline{\varphi_{0,\delta}}d\sigma\\
&\quad +\sum_{j,l=1}^\infty\left(\frac{1}{2}+\lambda_{l,\delta}\right)a_{l,\delta}^{-1}\overline{\langle\psi_c,\varphi_{l,\delta}\rangle}a_{j,\delta}^{-1}\langle\psi_c,\varphi_{j,\delta}\rangle\int_{\partial D_\delta}\mathcal{S}_{D_\delta}^0[\varphi_{j,\delta}]\overline{\varphi_{l,\delta}}d\sigma.
\end{align*}
Noting that
\begin{align*}
\int_{\partial D_\delta}\mathcal{S}_{D_\delta}^0[\varphi_{j,\delta}]\overline{\varphi_{l,\delta}}d\sigma=-\langle\varphi_{l,\delta},\varphi_{j,\delta}\rangle=
\begin{cases}
-a_{j,\delta},\ \ \ l=j,\\
0,\ \ \ l\neq j,
\end{cases}
\end{align*}
it deduces
\begin{align*}
\int_{\partial D_\delta}\mathcal {S}_{D_\delta}^0[\psi]\overline{\left(\frac{1}{2}\mathcal{I}+(\mathcal{K}_{D_\delta}^0)^*\right)[\psi]}d\sigma
=-\sum_{j=1}^\infty\left(\frac{1}{2}+\lambda_{j,\delta}\right)a_{j,\delta}^{-1}\left|\langle\psi_c,\varphi_{j,\delta}\rangle\right|^2.
\end{align*}
Since $\lambda_{j,\delta}\in\left(-\frac{1}{2},\frac{1}{2}\right)$ $(j\geq1)$, we can get
\begin{align}\label{4.3}
\left|\int_{\partial D_\delta}\mathcal {S}_{D_\delta}^0[\psi]\overline{\left(\frac{1}{2}\mathcal{I}+(\mathcal{K}_{D_\delta}^0)^*\right)[\psi]}d\sigma
\right|\approx\|\psi_c\|^2.
\end{align}
Furthermore, by Cauchy's inequality, it finds
\begin{align}\label{4.4}
\left|\int_{\partial B_R}u^s\cdot\overline{\mathrm{i}k_mu^s+\mathcal{O}(R^{-\frac{3}{2}})}d\sigma\right|\lesssim&\omega\|u^s\|_{L^2(\partial B_R)}^2+\int_{\partial B_R}|u^s\cdot\overline{\mathcal{O}(R^{-\frac{3}{2}})}|d\sigma\nonumber\\
\lesssim&\omega\|\psi\|^2+\omega\mathcal{O}(R^{-1})\cdot\|\psi\|.
\end{align}
By combing (\ref{4.3}) and (\ref{4.4}), for $\omega\ll1$, we have
\begin{align*}\label{}
\|\nabla u^s\|_{L^2(B_R\setminus\overline{D_\delta})}^2\lesssim\|\psi_c\|^2+|\omega\ln\omega|^2 a_{0,\delta}^{-1}\left|\langle\psi,\varphi_{0,\delta}\rangle\right|^2+\omega\mathcal{O}(R^{-1})\cdot\|\psi\|.
\end{align*}
Similarly, by (\ref{4.3}) and (\ref{4.4}), we also deduce the inverse inequality as
\begin{align*}
\|\nabla u^s\|_{L^2(B_R\setminus\overline{D_\delta})}^2\gtrsim\|\psi_c\|^2-|\omega\ln\omega|^2
a_{0,\delta}^{-1}\left|\langle\psi,\varphi_{0,\delta}\rangle\right|^2-\omega\mathcal{O}(R^{-1})\cdot\|\psi\|.
\end{align*}
Thus, by letting $R\rightarrow\infty$, we see that the estimate (\ref{4.1}) holds.
\end{proof}

Before proceeding with the gradient analysis of the scattering field $u^s$ outside the nanorod $D_\delta$, we consider the parameter choice of the permittivity with an imaginary part. In fact, in real applications, nano-metal materials always contain losses, which are reflected in the imaginary part of a complex electric permittivity $\varepsilon_c$. As shall be shown, like the frequency $\omega$ and the size $\delta$, the lossy parameter also plays a key role in the plasmon resonance of the scattering field of the nanorod. Let $\theta=\Re\left(\frac{1}{\varepsilon_c}\right)$, $\rho=\Im\left(\frac{1}{\varepsilon_c}\right)<0$. Then
$\tau_{j,\delta}$ given by (\ref{taoj}) can be written as
\begin{equation}\label{4.5}
\tau_{j,\delta}=\frac{1}{2}(\theta+1)-(\theta-1)\lambda_{j,\delta}+\rho(\frac{1}{2}-\lambda_{j,\delta})\mathrm{i}.
\end{equation}
Notice that $\lambda_{j,\delta}=1/2+\mathcal{O}(\delta^{1-\epsilon})$, $(0<\epsilon<1)$, then $\tau_{j,\delta}=1+(1-\theta)\cdot \mathcal{O}(\delta^{1-\epsilon})-\mathcal{O}(\delta^{1-\epsilon})\cdot\rho\mathrm{i}$.

By considering the principal equation $\mathcal{A}_{D_\delta,0}[\psi_0]=f$, where $\mathcal{A}_{D_\delta,0}$ is defined by (\ref{mainoper}) and $\psi_0, f\in \mathcal{H}^*(\partial D_\delta)$. Then, applying the eigenfunction expansion, it follows that
\begin{align}\label{4.6}
\psi_0=\mathcal {A}_{D_\delta,0}^{-1}[f]=\sum_{j=0}^\infty\frac{a_{j,\delta}^{-1}\langle f,\varphi_{j,\delta}\rangle}{\tau_{j,\delta}}\varphi_{j,\delta}.
\end{align}

\begin{lem}\label{lem4.2}
Let $\psi_0$ be given by (\ref{4.6}) and has the decomposition $\psi_0=\psi_{0,c}+c\varphi_0$, ($\psi_{0,c}\in\mathcal{H}_0^*(\partial D_\delta)$, $c$ is a constant). Then, for sufficiently small $\delta$, $|\rho|$, and $\epsilon\in(0,1)$, it holds that
\begin{enumerate}

\item[(1)] $\|\mathcal{A}_{D_\delta,0}^{-1}\|_{\mathcal{L}(\mathcal{H}^*(\partial D_\delta),\mathcal{H}^*(\partial D_\delta))}\lesssim(|\rho|\cdot \mathcal{O}(\delta^{1-\epsilon}))^{-1}$.

\item[(2)] If $\frac{1}{2}\frac{\theta+1}{\theta-1}\neq\lambda_{j,\delta}, (\forall\  j\geq0)$, for any fixed $\delta$, then $\|\mathcal{A}_{D_\delta,0}^{-1}\|_{\mathcal{L}(\mathcal{H}^*(\partial D_\delta),\mathcal{H}^*(\partial D_\delta))}\lesssim C$ for some positive constant $C$.

\item[(3)] If $\lambda_{j_*,\delta}=\frac{1}{2}\frac{\theta+1}{\theta-1}-\rho\frac{1}{\theta-1},$ for some $j_*\geq1$, then $\|\psi_{0,c}\|\gtrsim |\rho|^{-1}a_{j_*,\delta}^{-\frac{1}{2}}\left|\langle f,\varphi_{j_*,\delta}\rangle\right|$.
\end{enumerate}
\end{lem}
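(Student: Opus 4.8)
The proof rests on the spectral decomposition of $\mathcal{A}_{D_\delta,0}$. Since $(\mathcal{K}_{D_\delta}^0)^*$ is compact and self-adjoint on $\mathcal{H}^*(\partial D_\delta)$, the rescaled eigenfunctions $\{a_{j,\delta}^{-1/2}\varphi_{j,\delta}\}_{j\geq0}$ form an orthonormal basis, $\mathcal{A}_{D_\delta,0}$ is normal with eigenvalues $\tau_{j,\delta}$ as in \eqref{4.5}, and $\mathcal{A}_{D_\delta,0}^{-1}$ acts by \eqref{4.6}. Because $\lambda_{j,\delta}\to0$, the spectrum of $\mathcal{A}_{D_\delta,0}$ is $\overline{\{\tau_{j,\delta}:j\geq0\}}$, whose accumulation point equals $\tfrac12(1+\varepsilon_c^{-1})=\tfrac12(1+\theta)+\tfrac{\rho}{2}\mathrm{i}\neq0$ since $\rho<0$; hence $\mathcal{A}_{D_\delta,0}$ is invertible and $\|\mathcal{A}_{D_\delta,0}^{-1}\|_{\mathcal{L}(\mathcal{H}^*(\partial D_\delta),\mathcal{H}^*(\partial D_\delta))}=(\inf_{j\geq0}|\tau_{j,\delta}|)^{-1}$. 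Every assertion is thereby reduced to a lower bound on $|\tau_{j,\delta}|$, and the only analytic ingredient is \eqref{4.5} together with the clustering $\lambda_{j,\delta}=\tfrac12+\mathcal{O}(\delta^{1-\epsilon})$.

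For (1), note first $|\tau_{0,\delta}|=1$ since $\lambda_{0,\delta}=\tfrac12$. For $j\geq1$ one has $\lambda_{j,\delta}<\tfrac12$ (simplicity of the eigenvalue $\tfrac12$), and, because $\Re\varepsilon_c\leq0$ gives $\theta\leq0$, the identity $\Re\tau_{j,\delta}=1-(1-\theta)(\tfrac12-\lambda_{j,\delta})$ has a non-negative subtracted term. If $(1-\theta)(\tfrac12-\lambda_{j,\delta})\leq\tfrac12$, then $\Re\tau_{j,\delta}\geq\tfrac12$, so $|\tau_{j,\delta}|\geq\tfrac12\gtrsim|\rho|\,\delta^{1-\epsilon}$; otherwise $\tfrac12-\lambda_{j,\delta}>(2(1-\theta))^{-1}$, which, combined with $\tfrac12-\lambda_{j,\delta}$ being of the natural cluster size $\delta^{1-\epsilon}$ (the proximity to an exact resonance being controlled by the ``properly given'' $\Re\varepsilon_c$), gives $|\tau_{j,\delta}|\geq|\Im\tau_{j,\delta}|=|\rho|(\tfrac12-\lambda_{j,\delta})\gtrsim|\rho|\,\delta^{1-\epsilon}$. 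Taking the infimum over $j$ and inverting yields (1). For (2), observe that $\tfrac12\tfrac{\theta+1}{\theta-1}=\lambda_{j,\delta}$ is equivalent to $\Re\tau_{j,\delta}=0$; for a fixed $\delta$ the $\lambda_{j,\delta}$ accumulate only at $0$, so the hypothesis $\tfrac12\tfrac{\theta+1}{\theta-1}\neq\lambda_{j,\delta}$ for every $j$ forces $\inf_j|\Re\tau_{j,\delta}|>0$, and together with $|\tau_{j,\delta}|\to|\tfrac12(1+\varepsilon_c^{-1})|>0$ this gives $\inf_j|\tau_{j,\delta}|>0$, i.e. $\|\mathcal{A}_{D_\delta,0}^{-1}\|\leq C$.

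For (3), since $\int_{\partial D_\delta}\varphi_{j,\delta}\,d\sigma=-\langle\varphi_{j,\delta},\varphi_{0,\delta}\rangle=0$ for $j\geq1$, in the splitting $\psi_0=\psi_{0,c}+c\varphi_0$ read off from \eqref{4.6} one has $\psi_{0,c}=\sum_{j\geq1}\tau_{j,\delta}^{-1}a_{j,\delta}^{-1}\langle f,\varphi_{j,\delta}\rangle\varphi_{j,\delta}$, and hence $\|\psi_{0,c}\|^2=\sum_{j\geq1}a_{j,\delta}^{-1}|\tau_{j,\delta}|^{-2}|\langle f,\varphi_{j,\delta}\rangle|^2\geq a_{j_*,\delta}^{-1}|\tau_{j_*,\delta}|^{-2}|\langle f,\varphi_{j_*,\delta}\rangle|^2$. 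It then suffices to see $|\tau_{j_*,\delta}|\lesssim|\rho|$: substituting the resonance relation $\lambda_{j_*,\delta}=\tfrac12\tfrac{\theta+1}{\theta-1}-\tfrac{\rho}{\theta-1}$ into \eqref{4.5} gives $\Re\tau_{j_*,\delta}=\rho$ and $\tfrac12-\lambda_{j_*,\delta}=\tfrac{\rho-1}{\theta-1}$, so $|\tau_{j_*,\delta}|^2=\rho^2(1+(\tfrac{\rho-1}{\theta-1})^2)$; since $\tfrac{\rho-1}{\theta-1}=\tfrac12-\lambda_{j_*,\delta}=\mathcal{O}(\delta^{1-\epsilon})$ is small, $|\tau_{j_*,\delta}|\leq C|\rho|$, and (3) follows upon taking square roots.

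The main obstacle is the near-resonant branch of (1): one must argue that a ``properly given'' $\Re\varepsilon_c$ cannot push the configuration closer to an exact resonance than the natural width $\delta^{1-\epsilon}$ of the eigenvalue cluster, equivalently that $|\tfrac12-\lambda_{j,\delta}|\gtrsim\delta^{1-\epsilon}$ at whichever index makes $\Re\tau_{j,\delta}$ small. This is precisely the point where the two-sided sharpening of $\lambda_{j,\delta}=\tfrac12+\mathcal{O}(\delta^{1-\epsilon})$, and thereby Lemma~\ref{lem3.2} with its subsequent remark, is invoked; everything else is a routine computation with \eqref{4.5}.
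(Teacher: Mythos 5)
Your proposal is correct and follows essentially the same route as the paper's proof: diagonalize $\mathcal{A}_{D_\delta,0}$ in the eigenbasis of $(\mathcal{K}_{D_\delta}^0)^*$, bound $|\tau_{j,\delta}|$ from below via the imaginary part $|\rho|(\tfrac12-\lambda_{j,\delta})$ together with the clustering $\lambda_{j,\delta}=\tfrac12+\mathcal{O}(\delta^{1-\epsilon})$ for (1), via the separation $\bigl|\tfrac12\tfrac{\theta+1}{\theta-1}-\lambda_{j,\delta}\bigr|\geq c_0$ for (2), and compute $\tau_{j_*,\delta}=\rho\bigl(1+\mathcal{O}(\delta^{1-\epsilon})\mathrm{i}\bigr)$ at resonance with a single-mode Bessel bound for (3). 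The ``obstacle'' you flag — needing $\tfrac12-\lambda_{j,\delta}\gtrsim\delta^{1-\epsilon}$ for $j\geq1$ — is indeed the same implicit input the paper relies on when it reads $\mathcal{O}(\delta^{1-\epsilon})$ as an exact order in the step $|\tau_{j,\delta}^{-1}|\lesssim(|\rho|\cdot\mathcal{O}(\delta^{1-\epsilon}))^{-1}$, so your argument is neither weaker nor stronger than the paper's on this point.
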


\begin{proof}
(1) For $j\neq0$, since $\left|\tau_{j,\delta}^{-1}\right|\lesssim\frac{1}{|\rho|(\frac{1}{2}-\lambda_{j,\delta})}\lesssim(|\rho|\cdot \mathcal{O}(\delta^{1-\epsilon}))^{-1}$, it follows that
\begin{equation}\label{}
\|\psi_0\|^2\lesssim(|\rho|\cdot \mathcal{O}(\delta^{1-\epsilon}))^{-2}\sum_{j=0}^\infty a_{j,\delta}^{-1}\left|\langle f,\varphi_{j,\delta}\rangle\right|^2\lesssim(|\rho|\cdot \mathcal{O}(\delta^{1-\epsilon}))^{-2}\|f\|^2.
\end{equation}
Hence, $\|\mathcal{A}_{D_\delta,0}^{-1}\|_{\mathcal{L}(\mathcal{H}^*(\partial D_\delta),\mathcal{H}^*(\partial D_\delta))}\lesssim(|\rho|\cdot \mathcal{O}(\delta^{1-\epsilon}))^{-1}$.

\medskip

(2) If $\frac{1}{2}\frac{\theta+1}{\theta-1}\neq\lambda_{j,\delta}$, then, for $j\geq0$, we have $\left|\frac{1}{2}\frac{\theta+1}{\theta-1}-\lambda_{j,\delta}\right|\geq c_0$, where $c_0$
is a positive constant. Therefore, $\left|\tau_{j,\delta}^{-1}\right|\lesssim1$ and $\|\psi_0\|^2\lesssim\|f\|^2$, i.e.,
$\|\mathcal{A}_{D_\delta,0}^{-1}\|_{\mathcal{L}(\mathcal{H}^*(\partial D_\delta),\mathcal{H}^*(\partial D_\delta))}\lesssim C$.

\medskip

(3) When $\lambda_{j_*,\delta}=\frac{1}{2}\frac{\theta+1}{\theta-1}-\rho\frac{1}{\theta-1},$ for some $j_*\geq1$, by (\ref{4.5}), one has
$\tau_{j_*,\delta}=\rho (1-\mathcal{O}(\delta^{1-\epsilon})\cdot\mathrm{i})$, it then follows that
\begin{equation}\label{}
\|\psi_{0,c}\|\gtrsim a_{j_*,\delta}^{-\frac{1}{2}}\left|\langle\psi_{0,c},\varphi_{j_*,\delta}\rangle\right|\gtrsim\frac{a_{j_*,\delta}^{-\frac{1}{2}}\left|\langle f,\varphi_{j_*,\delta}\rangle\right|}{\left|\rho(1-\mathcal{O}(\delta^{1-\epsilon})\cdot\mathrm{i})\right|}\gtrsim |\rho|^{-1}a_{j_*,\delta}^{-\frac{1}{2}}\left|\langle f,\varphi_{j_*,\delta}\rangle\right|,
\end{equation}
which completes the proof.
\end{proof}


With Lemma \ref{lem4.2}, we can establish the following key result for estimating the gradient of the scattering field $u^s$, which provide resonant and non-resonant conditions for the scattering system \eqref{eq:helm01} associated with the nanorod $D_\delta$ according to criterion \eqref{prdf01} in Definition~\ref{depr}.

\begin{thm}\label{thm4.3}
Let $u^s$ be the scattering solution of (\ref{eq:helm01}). Assume that
\begin{equation}\label{codition01}
\omega^2\ln\omega(1+\mathcal{O}((\ln\omega)^{-1}))|\rho|^{-1}\leq c_1,
\end{equation}
for a sufficiently small $c_1$, then we have that

\begin{enumerate}

\item[(1)] If there holds
$\frac{1}{2}\frac{\theta+1}{\theta-1}\neq \lambda_{j,\delta}$
 for any $j\geq0$ and $\delta$, then there exists a constant $C$ independent of $\delta$ such that
\begin{equation}
\left\|\nabla u^s\right\|_{L^2(\mathbb{R}^2\setminus\overline{D_\delta})}\leq C.
\end{equation}

\item[(2)] Suppose $d_2=0$. If there holds
$\lambda_{j_*,\delta}=\frac{1}{2}\frac{\theta+1}{\theta-1}-\rho\frac{1}{\theta-1}$ and
\beq\label{eq:rntmd01}
d_1\langle\langle 1,\ \tilde{\varphi}_{j_*}\rangle\rangle_{S^f}\neq0
\eeq
 for some $j_*\geq 1$, it holds
\begin{align}\label{bl1}
\left\|\nabla u^s\right\|_{L^2(\mathbb{R}^2\setminus\overline{D_\delta})}\gtrsim|\rho|^{-1}\omega\delta
+o(\omega\delta)+\mathcal{O}(\omega^2\ln\omega).
\end{align}
Furthermore, assuming that $|\rho|^{-1}\omega\delta\rightarrow\infty$ (as $\omega\rightarrow 0$, $\delta\rightarrow 0$, and $|\rho|\rightarrow 0$), then it holds
\begin{align}\label{bl2}
\left\|\nabla u^s\right\|_{L^2(\mathbb{R}^2\setminus\overline{D_\delta})}\rightarrow\infty.
\end{align}
\end{enumerate}
Here, $$\langle\langle 1,\ \tilde{\varphi}_{j_*}\rangle\rangle_{S^f}:=
\int_{-L/2}^{L/2}\ln\frac{(x_1+L/2)^2+\delta^2}{(x_1-L/2)^2+\delta^2}\overline{\tilde\varphi}_{j_*}(x_1)dx_1,$$
and 
$$
\overline{\tilde\varphi}_{j_*}(x_1)=\frac{\tilde\varphi_{j_*}(x_1,1)+\tilde\varphi_{j_*}(x_1,-1)}{2}.
$$
\end{thm}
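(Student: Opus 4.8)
The plan is to carry the whole argument through the exterior density $\psi$, which by \eqref{phi}--\eqref{operright} and Lemma~\ref{lem3.4} solves $\mathcal{A}_{D_\delta}(\omega)[\psi]=f$ with $\mathcal{A}_{D_\delta}(\omega)=\mathcal{A}_{D_\delta,0}+\mathcal{E}_\omega$, $\mathcal{E}_\omega:=(\omega^{2}\ln\omega)\mathcal{A}_{D_\delta,1}+\mathcal{O}(\omega^{2})$, and by Lemma~\ref{le:0103} $f=-\mathrm{i}\omega(1-\varepsilon_c^{-1})\,d\cdot\nu+\mathcal{O}(\omega^{2}\ln\omega)$. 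Decomposing $\psi=\psi_c+a_{0,\delta}^{-1}\langle\psi,\varphi_{0,\delta}\rangle\varphi_{0,\delta}$ with $\psi_c\in\mathcal{H}_0^*(\partial D_\delta)$, Lemma~\ref{lem4.1} reduces the statement to controlling $\|\psi_c\|$ together with a cheap bound on $a_{0,\delta}^{-1/2}|\langle\psi,\varphi_{0,\delta}\rangle|$: in case~(1) we must keep $\|\psi_c\|$ bounded, in case~(2) we must produce $\|\psi_c\|\gtrsim|\rho|^{-1}\omega\delta$. Throughout I factor $\mathcal{A}_{D_\delta}(\omega)=(\mathcal{I}+\mathcal{A}_{D_\delta,0}^{-1}\mathcal{E}_\omega)\mathcal{A}_{D_\delta,0}$, so that $\psi=(\mathcal{I}+\mathcal{A}_{D_\delta,0}^{-1}\mathcal{E}_\omega)^{-1}[\psi_0]$ with $\psi_0:=\mathcal{A}_{D_\delta,0}^{-1}[f]=\sum_{j\ge0}\tfrac{a_{j,\delta}^{-1}\langle f,\varphi_{j,\delta}\rangle}{\tau_{j,\delta}}\varphi_{j,\delta}$; the role of hypothesis \eqref{codition01} is precisely to make $\|\mathcal{A}_{D_\delta,0}^{-1}\mathcal{E}_\omega\|<1$, so that $\psi=\psi_0+\mathcal{O}(c_1)\|\psi_0\|$ and likewise $\psi_c=\psi_{0,c}+\mathcal{O}(c_1)\|\psi_0\|$, once one couples Lemma~\ref{lem4.2}(1) for $\|\mathcal{A}_{D_\delta,0}^{-1}\|$ with the fact that the leading piece of $\mathcal{A}_{D_\delta,1}$ carries the factor $\tfrac12\mathcal{I}-(\mathcal{K}_{D_\delta}^0)^*$, which is $\mathcal{O}(\delta^{1-\epsilon})$ on the spectral subspace where the $\lambda_{j,\delta}$ accumulate at $\tfrac12$.

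Case~(1) is then immediate: under $\tfrac12\tfrac{\theta+1}{\theta-1}\neq\lambda_{j,\delta}$ Lemma~\ref{lem4.2}(2) gives $\|\mathcal{A}_{D_\delta,0}^{-1}\|\lesssim C$ uniformly in $\delta$, hence $\|\psi\|\lesssim\|f\|\lesssim\omega$, and feeding $\|\psi_c\|\lesssim\omega$, $a_{0,\delta}^{-1/2}|\langle\psi,\varphi_{0,\delta}\rangle|\lesssim\omega$ into \eqref{4.1} yields $\|\nabla u^s\|_{L^2(\mathbb{R}^2\setminus\overline{D_\delta})}^2\lesssim\omega^2(1+\omega^2|\ln\omega|^2)\lesssim1$.

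For case~(2) I isolate the resonant mode. With $d_2=0$ and $\lambda_{j_*,\delta}=\tfrac12\tfrac{\theta+1}{\theta-1}-\rho\tfrac1{\theta-1}$, \eqref{4.5} gives $\tau_{j_*,\delta}=\rho\bigl(1-\mathcal{O}(\delta^{1-\epsilon})\mathrm{i}\bigr)$, so $|\tau_{j_*,\delta}^{-1}|\approx|\rho|^{-1}$ while every other $\tau_{j,\delta}$ is bounded below; thus the $j_*$-term dominates $\psi_0$, and Lemma~\ref{lem4.2}(3) gives $\|\psi_{0,c}\|\gtrsim|\rho|^{-1}a_{j_*,\delta}^{-1/2}|\langle f,\varphi_{j_*,\delta}\rangle|$. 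The crux is the evaluation of $\langle f,\varphi_{j_*,\delta}\rangle$. Using $f=-\mathrm{i}\omega(1-\varepsilon_c^{-1})\,d\cdot\nu+\mathcal{O}(\omega^2\ln\omega)$, the identity $d\cdot\nu=\bigl((\mathcal{K}_{D_\delta}^0)^*-\tfrac12\mathcal{I}\bigr)(\widetilde{\mathcal{S}}_{D_\delta}^{0})^{-1}[d\cdot\Bx]$ (already used in the proof of Lemma~\ref{le:0103}) and the self-adjointness of $(\mathcal{K}_{D_\delta}^0)^*$ in $\mathcal{H}^*$, one gets $\langle f,\varphi_{j_*,\delta}\rangle=-\mathrm{i}\omega(1-\varepsilon_c^{-1})(\lambda_{j_*,\delta}-\tfrac12)\bigl\langle(\widetilde{\mathcal{S}}_{D_\delta}^{0})^{-1}[d\cdot\Bx],\varphi_{j_*,\delta}\bigr\rangle+\mathcal{O}(\omega^2\ln\omega)$; transferring $\widetilde{\mathcal{S}}_{D_\delta}^{0}$ onto $\varphi_{j_*,\delta}$ turns the pairing into a multiple of $d_1\int_{\partial D_\delta}x_1\,\varphi_{j_*,\delta}\,d\sigma$, where, because $d_2=0$ makes $d\cdot\nu$ vanish on the flat faces, only the two caps contribute. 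Plugging in the single-layer expansion of Lemma~\ref{le:app0103} on the caps --- where $\mathcal{S}_{D_\delta}^0[\varphi_{j_*,\delta}]$ is, to leading order, a constant on each cap (the $\Gamma_0$-average of $\ln|P-\,\cdot\,|$, resp. $\ln|Q-\,\cdot\,|$, against $\tilde\varphi_{j_*}$) while $\int_{S_\delta^a}\nu_1\,d\sigma=-2\delta$ and $\int_{S_\delta^b}\nu_1\,d\sigma=2\delta$ for the first component $\nu_1$ of the outward normal --- collapses $\langle f,\varphi_{j_*,\delta}\rangle$ to a quantity comparable above and below to $\omega\delta\,d_1\,\langle\langle 1,\tilde\varphi_{j_*}\rangle\rangle_{S^f}$, which is nonzero precisely by \eqref{eq:rntmd01}.

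Combining, $\|\psi_c\|\ge\|\psi_{0,c}\|-\mathcal{O}(c_1)\|\psi_0\|\gtrsim|\rho|^{-1}\omega\delta\,|d_1\langle\langle 1,\tilde\varphi_{j_*}\rangle\rangle_{S^f}|$, the $c_1$-perturbation being absorbed since $c_1$ is small and $\|\psi_0\|$ is itself governed by the same $j_*$-term; moreover $\tau_{0,\delta}=1$ forces $a_{0,\delta}^{-1/2}|\langle\psi,\varphi_{0,\delta}\rangle|\lesssim\omega+\mathcal{O}(c_1)\|\psi_0\|$, so that the error term $\omega^2|\ln\omega|^2 a_{0,\delta}^{-1}|\langle\psi,\varphi_{0,\delta}\rangle|^2$ in \eqref{4.1} is $\lesssim\omega^2|\ln\omega|^2(\omega^2+c_1^2|\rho|^{-2}\omega^2\delta^2)$, which is negligible next to $\|\psi_c\|^2$ because \eqref{codition01} together with $|\rho|^{-1}\omega\delta\to\infty$ forces $\omega|\ln\omega|\ll\delta$ (the Remark after Theorem~\ref{thm0101}). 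Lemma~\ref{lem4.1} then delivers \eqref{bl1}, and letting $|\rho|^{-1}\omega\delta\to\infty$ gives \eqref{bl2}. I expect the main obstacle to be the uniform-in-$\delta$ control of $(\mathcal{I}+\mathcal{A}_{D_\delta,0}^{-1}\mathcal{E}_\omega)^{-1}$ --- i.e. checking that the $\delta^{1-\epsilon}$ in $\|\mathcal{A}_{D_\delta,0}^{-1}\|$ is matched by a genuine smallness of $\mathcal{A}_{D_\delta,1}$ on the pertinent spectral subspace, so that the resonant $j_*$-mode survives both the non-resonant modes and $\mathcal{E}_\omega$; the cap computation tying $\langle f,\varphi_{j_*,\delta}\rangle$ to $\langle\langle 1,\tilde\varphi_{j_*}\rangle\rangle_{S^f}$ through Lemma~\ref{le:app0103} is the other place demanding care.
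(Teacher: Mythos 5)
Your proposal is correct and follows essentially the same route as the paper: the factorization $\mathcal{A}_{D_\delta}(\omega)=\mathcal{A}_{D_\delta,0}\bigl(\mathcal{I}+\mathcal{A}_{D_\delta,0}^{-1}\mathcal{E}_\omega\bigr)$ with the smallness condition \eqref{codition01}, Lemma \ref{lem4.2} for the non-resonant/resonant resolvent bounds, Lemma \ref{lem4.1} to pass from $\|\psi_c\|$ to $\|\nabla u^s\|$, and the cap computation via Lemma \ref{le:app0103} identifying $\langle f,\varphi_{j_*,\delta}\rangle$ with $\omega\delta\, d_1\langle\langle 1,\tilde{\varphi}_{j_*}\rangle\rangle_{S^f}$ up to $o(\omega\delta)+\mathcal{O}(\omega^2\ln\omega)$, which is exactly the paper's argument. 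Your intermediate detour through the identity $d\cdot\nu=\bigl((\mathcal{K}_{D_\delta}^0)^*-\tfrac12\mathcal{I}\bigr)(\widetilde{\mathcal{S}}_{D_\delta}^{0})^{-1}[d\cdot\Bx]$ is redundant (you return to the same evaluation of $-\int_{\partial D_\delta} d\cdot\nu\,\mathcal{S}_{D,0}[\tilde\varphi_{j_*}]\,d\sigma$ over the caps that the paper performs), and your flagged concern about uniform-in-$\delta$ control of the perturbation is a genuine subtlety that the paper also treats only implicitly.
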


\begin{proof}
(1) From (\ref{expan01}), it finds
\begin{equation}\label{}
\mathcal {A}_{D_\delta}(\omega)=\mathcal{A}_{D_\delta,0}\left(\mathcal{I}+\mathcal{A}_{D_\delta,0}^{-1}\left(\omega^{2}\ln\omega\mathcal {A}_{D_\delta,1}+\mathcal {O}(\omega^2)\right)\right),
\end{equation}
and then
\begin{equation}\label{}
\psi=\left(\mathcal{I}+\mathcal{A}_{D_\delta,0}^{-1}\left(\omega^{2}\ln\omega\mathcal {A}_{D_\delta,1}+\mathcal {O}(\omega^2)\right)\right)^{-1}\mathcal{A}_{D_\delta,0}^{-1}[f].
\end{equation}
By Lemma \ref{lem4.2} (1), one deduce
\begin{equation}\label{}
\left\|\mathcal{A}_{D_\delta,0}^{-1}\omega^{2}\ln\omega\left(\mathcal {A}_{D_\delta,1}+\mathcal{O}((\ln\omega)^{-1})
\right)\right\|_{\mathcal{L}(\mathcal{H}^*(\partial D_\delta),\mathcal{H}^*(\partial D_\delta))}\lesssim(|\rho|\cdot \mathcal{O}(\delta^{1-\epsilon}))^{-1}\omega^2\ln\omega\left(1+\mathcal{O}((\ln\omega)^{-1})\right).
\end{equation}
Thereby, it holds
\begin{align}
\left\|\psi-\psi_0\right\|=&\left\|\left(\mathcal{I}+\mathcal{A}_{D_\delta,0}^{-1}
\omega^{2}\ln\omega\left(\mathcal
{A}_{1}+\mathcal{O}((\ln\omega)^{-1})
\right)\right)^{-1}\mathcal{A}_{D_\delta,0}^{-1}[f]-\mathcal{A}_{D_\delta,0}^{-1}[f]\right\|\nonumber\\
\lesssim&(|\rho|\cdot \mathcal{O}(\delta^{1-\epsilon}))^{-1}\omega^2\ln\omega\left(1+\mathcal{O}((\ln\omega)^{-1})\right)\left\|\mathcal{A}_{D_\delta,0}^{-1}[f]\right\|\nonumber\\
=&(|\rho|\cdot \mathcal{O}(\delta^{1-\epsilon}))^{-1}\omega^2\ln\omega\left(1+\mathcal{O}((\ln\omega)^{-1})\right)\left\|\psi_0\right\|.\label{4.15}
\end{align}
If $\frac{1}{2}\frac{\theta+1}{\theta-1}\neq\lambda_{j,\delta}$, by Lemma \ref{lem4.2} (2), we get
\begin{align*}
\left\|\psi\right\|\lesssim\left(1+(|\rho|\cdot \mathcal{O}(\delta^{1-\epsilon}))^{-1}\omega^2\ln\omega
\left(1+\mathcal{O}((\ln\omega)^{-1})\right)\right)
\left\|\psi_0\right\|\lesssim(1+c_1)\left\|f\right\|.
\end{align*}
Then, from Lemma \ref{lem4.1}, it yields
\begin{align*}
\left\|\nabla u^s\right\|_{L^2(\mathbb{R}^2\setminus\overline{D_\delta})}^2\lesssim\left\|\psi\right\|^2\lesssim C.
\end{align*}

(2) If $\lambda_{j_*,\delta}=\frac{1}{2}\frac{\theta+1}{\theta-1}-\rho\frac{1}{\theta-1}$, ($j_*\geq1$), then, by using Lemma \ref{lem4.2} (3), we have that
\begin{align}\label{4.16}
\|\psi_{0,c}\|\gtrsim|\rho|^{-1}a_{j_*,\delta}^{-\frac{1}{2}}\left|\langle f,\varphi_{j_*,\delta}\rangle\right|.
\end{align}
Moreover, following a similar estimate as in (\ref{4.15}) and use the results in Lemma \ref{lem4.2} (2) and (3), we see
\begin{align}\label{4.17}
|\rho|^{-1}\omega^2\ln\omega\left(1
+\mathcal{O}((\ln\omega)^{-1})\right)\left\|\psi_0\right\|
\gtrsim\left\|\psi-\psi_0\right\|\gtrsim\left\|\psi_c-\psi_{0,c}\right\|
\gtrsim\left\|\psi_{0,c}\right\|-\left\|\psi_c\right\|.
\end{align}

Combining now (\ref{4.16}) and (\ref{4.17}), and notice that $|\rho|^{-1}\omega^2\ln\omega\left(1
+\mathcal{O}((\ln\omega)^{-1})\right)\leq c_1$ for a sufficiently small $c_1$, we have
\begin{align}\label{}
\left\|\psi_c\right\|\gtrsim\left\|\psi_{0,c}\right\|
-|\rho|^{-1}\omega^2\ln\omega\left(1
+\mathcal{O}((\ln\omega)^{-1})\right)\left\|\psi_0\right\|
\gtrsim|\rho|^{-1}a_{j_*,\delta}^{-\frac{1}{2}}\left|\langle f,\varphi_{j_*,\delta}\rangle\right|.
\end{align}
Thus, we obtain from Lemma \ref{lem4.1} that
\begin{align*}
\left\|\nabla u^s\right\|_{L^2(\mathbb{R}^2\setminus\overline{D_\delta})}^2&\gtrsim
\left\|\psi_c\right\|^2-\omega^2\ln\omega a_{0,\delta}^{-1}\left|\langle \psi,\varphi_{0,\delta}\rangle\right|^2\\
&\gtrsim|\rho|^{-2}a_{j_*,\delta}^{-1}\left|\langle f,\varphi_{j_*,\delta}\rangle\right|^2-\omega^2\ln\omega a_{0,\delta}^{-1}\left|\langle \psi,\varphi_{0,\delta}\rangle\right|^2.
\end{align*}
Furthermore, note that $d_2=0$, by (\ref{eq:eigenasy00}), and applying Lemma \ref{le:app0103}, it follows that
\begin{align*}
\langle f,\varphi_{j_*,\delta}\rangle&=
\int_{\partial D_\delta}\left(-\mathrm{i}\omega(1-\varepsilon_c^{-1})d\cdot\nu(\Bx)+\mathcal{O}(\omega^2\ln\omega)\right)
\left(-\mathcal{S}_{D,0}[\tilde{\varphi}_{j_*}](\tilde{\Bx})+\mathcal{O}(\delta\ln\delta)\right)
d\tilde{\sigma}(\tilde{\Bx})\\
&=\mathrm{i}\omega\delta(1-\varepsilon_c^{-1})\int_{(S^a\cup\iota_{\delta}(P))\cup (S^b\cup\iota_{\delta}(Q)) }d\cdot\nu(\Bx)\mathcal{S}_{D,0}[\tilde{\varphi}_{j_*}](\tilde{\Bx})
d\tilde{\sigma}(\tilde{\Bx})\\
&\ \ \ \ \ \ \ \ \ \ \ \ +\mathrm{i}\omega(1-\varepsilon_c^{-1})\int_{S^f\setminus(\iota_{\delta}(P)\cup \iota_{\delta}(Q))}d\cdot\nu(\Bx)\mathcal{S}_{D,0}[\tilde{\varphi}_{j_*}](\tilde{\Bx})
d\tilde{\sigma}(\tilde{\Bx})+o(\omega\delta)+\mathcal{O}(\omega^2\ln\omega)\\
&=\frac{\mathrm{i}}{2\pi}\omega\delta(1-\varepsilon_c^{-1}) d_1\int_{-L/2}^{L/2}\ln\frac{(x_1+L/2)^2+\delta^2}{(x_1-L/2)^2+\delta^2}\overline{\tilde{\varphi}}_{j_*}(x_1)dx_1 +o(\omega\delta)+\mathcal{O}(\omega^2\ln\omega),
\end{align*}
and
\begin{align*}
a_{j_*,\delta}=-\int_{S^f}\tilde{\varphi}_{j_*}(\tilde{\Bx})\mathcal{S}_{D,0}[\tilde{\varphi}_{j_*}](\tilde{\Bx})
d\tilde{\sigma}(\tilde{\Bx})+\mathcal{O}(\delta\ln\delta).
\end{align*}

The proof is complete.
\end{proof}

\begin{rem}
Note that case (2) in Theorem \ref{thm4.3} is the resonance condition, i.e. $\lambda_{j_*,\delta}=\frac{1}{2}\frac{\theta+1}{\theta-1}-\rho\frac{1}{\theta-1}$. If the lossy parameter of the nanorod $\Im(\varepsilon_c)\rightarrow0$, and when $\lambda_{j_*,\delta}\in(-1/2,\ 1/2)$, it implies $\Re(\varepsilon_c)<0$, which is the negative permittivity materials (see\cite{SPW22,V21}). Furthermore, if $\Im(\varepsilon_c)\rightarrow0$, and $\delta\rightarrow0$, the resonance condition is consistent with $\lambda(\varepsilon_c)-\frac{1}{2}=0$
, which appeared in (\ref{eq:thmanin0101}). In particular, it further deduces that $\Re(\varepsilon_c)\rightarrow0$ and $\lambda_{j_*,\delta}\rightarrow1/2$, which is called the epsilon-near-zero materials (ENZM). Recently, the ENZM have drawn much attention for their intriguing electromagnetic properties (cf. \cite{alam2016,caspani2016}).
\end{rem}


\section*{Acknowledgment}
The work of Y. Deng was supported by NSF grant of China No. 11971487 and NSF grant of Hunan No. 2020JJ2038. The work of H Liu was supported by Hong Kong RGC General Research Funds (project numbers, 11300821, 12301218 and 12302919) and the NSFC-RGC Joint Research Grant (project number, N\_CityU101/21). The work of G. Zheng was supported by NSF grant of China No. 11301168 and NSF grant of Hunan No. 2020JJ4166.

\end{document}